\def \ind {\hbox{1\hskip -3pt I}}
\def\bs{\backslash}
\newtheorem{theorem}{Theorem}[section]
\numberwithin{equation}{section}
\newtheorem{lemma}[theorem]{Lemma}
\newtheorem{proposition}[theorem]{Proposition}
\newtheorem{corollary}[theorem]{Corollary}
\newtheorem{remark}[theorem]{Remark}
\newtheorem{claim}[theorem]{Claim}
\numberwithin{equation}{section}
\def\N{\mathbb{N}}
\def\Z{\mathbb{Z}}
\def\R{\mathbb{R}}
\def\C{\mathbb{C}}
\def\bP{\mathbb{P}}
\def\S{\mathcal{S}}
\def\AA{\mathcal{A}}
\def\CC{\mathcal{C}}
\def\HH{\mathcal{H}}
\def\F{\mathcal{F}}
\def\B{\mathcal{B}}
\def\bP{\mathbb{P}}
\renewcommand{\phi}{\varphi}
\renewcommand{\epsilon}{\varepsilon}
\def\RR{\mathcal{R}}
\newcommand{\1}{{\text{\Large $\mathfrak 1$}}}
\renewcommand{\emptyset}{\varnothing}
\def\C{{\mathcal C}}
\newcommand{\til}{\widetilde}
\def\reff#1{(\ref{#1})}
\newcommand{\pr}[1]{\mathbb{P}\!\left(#1\right)}
\newcommand{\E}[1]{\mathbb{E}\!\left[#1\right]}
\newcommand{\prstart}[2]{\mathbb{P}_{#2}\!\left(#1\right)}
\newcommand{\prcond}[3]{\mathbb{P}_{#3}\!\left(#1\;\middle\vert\;#2\right)}
\newcommand{\norm}[1]{\left\| #1 \right\|}
\newcommand{\tn}{|\kern-.1em|\kern-0.1em|}
\newcommand{\cp}{\mathrm{Cap}}
\newcommand{\vr}[1]{\mathrm{Var}\left(#1\right)}
\newcommand{\cc}[1]{\mathrm{Cap}\left(#1\right)}
\def\Gd{G_d}
\def\bP{\mathbb{P}}
\newcommand{\tb}[1]{\textbf{\color{blue}{#1}}}
\begin{document}
\title{\bf Capacity of the range of random walk on $\Z^4$}

\author{Amine Asselah \thanks{
Universit\'e Paris-Est Cr\'eteil; amine.asselah@u-pec.fr} \and
Bruno Schapira\thanks{Aix-Marseille Universit\'e, CNRS, Centrale Marseille, I2M, UMR 7373, 13453 Marseille, France;  bruno.schapira@univ-amu.fr} \and Perla Sousi\thanks{University of Cambridge, Cambridge, UK;   p.sousi@statslab.cam.ac.uk} 
}
\date{}
\maketitle
\begin{abstract}
We study the scaling limit of the capacity of the range of
a simple random walk on the integer lattice in dimension four. 
We establish a strong law of large numbers and a central
limit theorem  with a non-gaussian limit. 
The  asymptotic behaviour is analogous to that found by Le Gall in~'86~\cite{LG86} for 
the {\it volume} of the range in dimension two. 
\newline
\newline
\emph{Keywords and phrases.} Capacity,  Green kernel, Law of large numbers, Central limit theorem.
MSC 2010 \emph{subject classifications.} Primary 60F05, 60G50.
\end{abstract}

\section{Introduction}\label{sec:intro}
This paper is devoted to the study of the capacity of the
range of a random walk in dimension four. The point of view
we adopt is that the capacity is a {\it hitting probability}.
More precisely, the capacity of a set is proportional to the
probability a random walk {\it sent from infinity} hits the set.
Then, the capacity of the range of a random
walk is cast into a problem of {\it intersection of paths}, and dimension
four is {\it critical} in view of classical results of Dvoretsky,
Erd\"os and Kakutani \cite{DEK} establishing that the paths of two
independent Brownian motions do not intersect if, and only if, 
dimension is four or larger.

The capacity of a set $A\in \Z^4$ can also be viewed as an escape probability.
Indeed, let $\bP_x$ be the law of a simple
random walk starting at $x$, let $G_d$ be the discrete Green's function,
and let $H_A$ and $H_A^+$ stand respectively for the hitting time of a finite set $A$ and the return time in $A$. Then
\begin{equation}\label{def-cap}
\cc{A}\ =\ \sum_{x\in A} \bP_x(H_A^+=\infty)\ =\ \lim_{\norm{y}\to\infty}\, 
 \frac{\bP_y(H_A<\infty)}{G_d(0,y)}.
\end{equation}
One easily passes from one representation in \reff{def-cap} to the other
using the last passage decomposition formula, see \eqref{last.passage} below.

Denote by $\{S(n),\ n\in \N\}$ a simple random walk in $\Z^4$. 
For two integers $m,n$, the range $\RR [m,n]$ (or simply $\RR_n$ when $m=0$) 
in the time period $[m,n]$ is defined as
\[
\RR[m,n]=\{S(m),\dots,S(n)\}.
\]

Our first result is a strong law of large numbers for $\cc{\RR_n}$.
 
\begin{theorem}\label{thm:lawoflargenumbers}
        Let $S$ be a simple random walk in $\Z^4$. Almost surely,
        \[
\lim_{n\to\infty}\  \frac{\log n}{n} \cdot \cc{\RR_n}\ =\  \frac{\pi^2}{8}.
        \]
\end{theorem}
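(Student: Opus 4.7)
My plan is to proceed in three steps: a first moment asymptotic, a variance estimate, and an almost-sure upgrade via subadditivity.

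\emph{Step 1 (First moment).} Starting from the first identity in \eqref{def-cap} and indexing each point of $\RR_n$ by its first visit time, I would write
\begin{equation*}
\cc{\RR_n} \;=\; \sum_{k=0}^{n}\ind\{S_j\neq S_k\text{ for all } j<k\}\cdot \bP_{S_k}\!\left(H_{\RR_n}^+=\infty\,\middle\vert\,\RR_n\right).
\end{equation*}
Taking expectation and applying the Markov property at time $k$ together with time reversal turns the past piece $(S_{k-1},\dots,S_0)$, the future piece $(S_{k+1},\dots,S_n)$ and an independent walk $\widetilde{S}$ starting at $S_k$ into three mutually independent simple random walks from a common origin. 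The indicator becomes ``the past walk does not return to its start within its first $k$ steps'', and the probability becomes ``$\widetilde{S}$ avoids both the past and the future ranges''. In the critical dimension four, the non-intersection probability of two independent SRWs of length $n$ decays like $c/\log n$, hence each summand is $\Theta(1/\log n)$ and their sum is $\Theta(n/\log n)$. Identifying the sharp constant by means of the $\Z^4$ Green's function asymptotic $\Gd(0,y)\sim (2\pi^2)^{-1}\|y\|^{-2}$ should give
\begin{equation*}
\E{\cc{\RR_n}} \;=\; \frac{\pi^2}{8}\cdot\frac{n}{\log n}(1+o(1)).
\end{equation*}

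\emph{Step 2 (Variance estimate).} I would expand $\vr{\cc{\RR_n}}$ into a double sum over pairs $(j,k)$ using the decomposition of Step 1, and control each covariance via intersection probability estimates for the corresponding independent pieces of walk. Pairs $(j,k)$ with $|j-k|$ small are few and contribute little; pairs with $|j-k|$ of order $n$ decorrelate at a logarithmic rate. The target bound is
\begin{equation*}
\vr{\cc{\RR_n}} \;=\; O\!\left(\frac{n^2}{(\log n)^{3}}\right),
\end{equation*}
which by Chebyshev's inequality immediately gives convergence in probability of $\cc{\RR_n}\log n/n$ to $\pi^2/8$.

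\emph{Step 3 (Almost sure convergence).} The upgrade to almost sure convergence exploits both the monotonicity $\cc{\RR_m}\leq \cc{\RR_n}$ for $m\leq n$ and the subadditivity
\begin{equation*}
\cc{\RR_n} \;\leq\; \cc{\RR_m}+\cc{\RR[m,n]},
\end{equation*}
in which $\cc{\RR[m,n]}$ is independent of $\cc{\RR_m}$ and distributed as $\cc{\RR_{n-m}}$. Along a subsequence $n_k$ chosen to make the variance estimate Borel--Cantelli summable (possibly after slightly sharpening the bound in Step 2 to $o(n^2/(\log n)^3)$), one obtains a.s.\ convergence along $n_k$; for $n_k\leq n<n_{k+1}$, the sandwich
\begin{equation*}
\cc{\RR_{n_{k+1}}} - \cc{\RR[n,n_{k+1}]} \;\leq\; \cc{\RR_n} \;\leq\; \cc{\RR_{n_k}}+\cc{\RR[n_k,n]},
\end{equation*}
combined with concentration of the independent correction terms $\cc{\RR[\,\cdot\,,\,\cdot\,]}$ around their means, closes the interpolation.

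\emph{Main obstacle.} The hard part is pinpointing the constant $\pi^2/8$ in Step 1. Since $d=4$ is the critical dimension for intersection of two random walks, logarithmic corrections dominate the asymptotics, and extracting the sharp prefactor requires delicate asymptotics for the joint non-intersection probability of three independent walks from a common origin, renormalized by the $\Z^4$ Green's function. Once this is in hand, Step 2 is a technically involved but standard covariance expansion, and Step 3 is a careful subadditivity argument.
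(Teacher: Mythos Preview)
Your overall framework (first moment $\to$ variance $\to$ Borel--Cantelli along a subsequence $\to$ interpolation) is the right one, and Step~1 is exactly the content of \eqref{LLN1moment}, which the paper imports from \cite{Asselah:2016hc} and Lawler's non-intersection estimates. The genuine gap is Step~2.

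A variance bound of order $n^2/(\log n)^3$ is not enough. Chebyshev then gives
\[
\pr{\left|\cc{\RR_n}-\E{\cc{\RR_n}}\right|>\varepsilon\,\frac{n}{\log n}}\ \lesssim\ \frac{1}{\log n},
\]
and there is no subsequence $(a_k)$ along which $\sum_k 1/\log a_k<\infty$ while $a_{k+1}/a_k\to 1$. Without the latter, neither monotonicity nor your subadditive sandwich closes the interpolation: the correction terms $\cc{\RR[n_k,n]}$ then have size comparable to $n/\log n$, and controlling them almost surely is the same variance problem over again. You need (up to $\log\log$ factors) $\vr{\cc{\RR_n}}=O\!\big(n^2/(\log n)^4\big)$, which yields a $O(1/(\log n)^2)$ tail, summable along $a_k=\exp(k^{3/4})$.

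The paper does not obtain this by a direct covariance expansion of the escape-probability representation; it is unclear such an expansion gives more than one extra $\log$. Instead, the key device is the decomposition formula of Proposition~\ref{pro:deccap}, iterated dyadically (Proposition~\ref{pro:cprange}):
\[
\cc{\RR_n}=\sum_{j=1}^{2^L}\cc{\RR_n^{(L,j)}}-\sum_{i=1}^{L}\sum_{j=1}^{2^{i-1}}\chi_n(i,j)-\epsilon_n.
\]
With $2^L\asymp (\log n)^4$, the first sum is a sum of $2^L$ i.i.d.\ terms each trivially bounded by $n/2^L$, hence has variance $\lesssim n^2/(\log n)^4$. The cross terms $\chi_n(i,j)$ are handled by Lemma~\ref{lem:firstsecondchi11}, which gives $\E{\chi_n(i,j)^2}\lesssim (n2^{-i})^2(\log\log n)^2/(\log n)^4$; independence in $j$ and Cauchy--Schwarz in $i$ then show the whole cross sum has variance $O\big(n^2(\log\log n)^3/(\log n)^4\big)$. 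The error $\epsilon_n$ is bounded by the capacity of an intersection of two independent ranges and is negligible. With this in hand, only monotonicity of $A\mapsto\cc{A}$ is needed to pass from the subsequence to all $n$; the subadditive sandwich is unnecessary.
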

Our second result is a central limit theorem for $\cc{\RR_n}$, and requires more notation:
$G$ denotes the continuous Green's function, and 
$(\beta_s, s\geq 0)$ a standard four-dimensional Brownian motion.

\begin{theorem}\label{thm:clt}
Let $S$ be a simple random walk in $\Z^4$. Then, as $n$ goes to infinity
\[
\frac{(\log n)^2}{ n}\cdot (\cc{\RR_n} - \E{\cc{\RR_n}})\quad 
\stackrel{(d)}{\Longrightarrow} \quad -\frac{\pi^4}{4} \cdot \gamma_G\big([0,1]^2\big),
\]
where $\stackrel{(d)}{\Longrightarrow}$ stands for convergence in distribution, and $\gamma_G\big([0,1]^2\big)$ is formally defined as
\begin{equation}\label{formal-gammaG}
\gamma_G\big([0,1]^2\big) \ =\ 
\int_{0}^{1}\int_0^1 G(\beta_s, \beta_t)\, ds\, dt -
 \E{\int_0^1\int_0^1 G(\beta_s,\beta_t)\,ds\, dt}.
\end{equation}
Moreover, $\gamma_G\big([0,1]^2\big)$ is non-degenerate, and non-gaussian, since for some $\lambda\in \R$,
\begin{equation}\label{infinite-moment}
\E{\exp(\lambda \gamma_G\big([0,1]^2\big))}=\infty.
\end{equation}
\end{theorem}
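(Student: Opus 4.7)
I plan to follow the scheme pioneered by Le Gall~\cite{LG86} for the volume of the range in $\Z^2$: split the walk into many pieces at a mesoscopic scale, express $\cc{\RR_n}$ as a sum of single-piece capacities plus pairwise interactions plus negligible higher-order terms, and recognise the properly rescaled interaction sum as a Riemann approximation to the Brownian functional on the right-hand side. Concretely, pick an integer $N=N(n)\to\infty$ sufficiently slowly, partition $[0,n]$ into intervals $I_1,\dots,I_N$ of common length $m:=n/N$, and set $R_j:=\RR_{I_j}$. The starting point is the capacity inclusion--exclusion identity
\[
\cc{A\cup B} \;=\; \cc{A}+\cc{B}-\chi(A,B),
\]
where $\chi(A,B)$ is an interaction term expressible through hitting probabilities via \eqref{def-cap}. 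Iterating on the partition produces the telescoping decomposition
\[
\cc{\RR_n} \;=\; \sum_{j=1}^N \cc{R_j} \;-\; \sum_{i<j}\chi(R_i,R_j) \;+\; \text{(triple- and higher-order terms)}.
\]
Theorem~\ref{thm:lawoflargenumbers} applied at scale $m$ shows the single-piece sum has fluctuations of order $o(n/(\log n)^2)$, so it contributes only to the mean at the scale of interest.

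\textbf{Convergence of the cross-term.} For windows $I_i,I_j$ with macroscopically separated midpoints, the equilibrium measures of $R_i,R_j$ concentrate near $I_i,I_j$, so to leading order
\[
\chi(R_i,R_j) \;\sim\; 2\,\cc{R_i}\,G_d\!\bigl(S_{t_i},S_{t_j}\bigr)\,\cc{R_j},
\]
for reference times $t_i\in I_i$, $t_j\in I_j$. Plugging in Theorem~\ref{thm:lawoflargenumbers} on each piece, together with the asymptotic $G_d(x,y)\asymp 1/|x-y|^2$ in $\Z^4$ and the Brownian scaling of the Green kernel, the quantity $\tfrac{(\log n)^2}{n}\sum_{i<j}\chi(R_i,R_j)$ becomes (up to lower-order errors) a constant times a Riemann sum for $\int_0^1\!\int_0^1 G(\beta_s,\beta_t)\,ds\,dt$ along the diffusively rescaled walk. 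Donsker's invariance principle, combined with the same centering prescription used to make $\gamma_G([0,1]^2)$ meaningful in dimension four (formula \eqref{formal-gammaG}), would then identify the limit of the rescaled centered sum as $-\tfrac{\pi^4}{4}\,\gamma_G([0,1]^2)$ after tracking the normalising constants from Theorem~\ref{thm:lawoflargenumbers} and the 4D Green kernel.

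\textbf{Main obstacle.} The technically delicate step is the diagonal: pairs $(i,j)$ with $|i-j|$ small, where $R_i$ and $R_j$ significantly overlap and the clean asymptotic for $\chi$ fails. One must (a) control these contributions individually via sharp $L^2$ estimates on capacity increments and (b) calibrate $N=N(n)$ large enough that the off-diagonal Riemann sum converges, yet small enough that diagonal fluctuations remain $o(n/(\log n)^2)$. This is the analogue of the calibration performed in \cite{LG86} for the 2D volume, and I expect most of the technical work to go into this step. A secondary obstacle is controlling the higher-order (triple, quadruple, \dots) terms in the iterated inclusion--exclusion, which I would handle by analogous $L^2$ bounds on capacity interactions.

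\textbf{Non-degeneracy and non-Gaussianity.} Since $G\ge 0$, the left tail of $\gamma_G([0,1]^2)$ is bounded deterministically by $\E{\int\!\int G(\beta_s,\beta_t)\,ds\,dt}$, so only the right tail can be heavy. To prove \eqref{infinite-moment} I would use a Brownian confinement estimate: the event $\AA_\eps:=\{\sup_{t\le 1}|\beta_t|\le\eps\}$ has probability at least $\exp(-c_1/\eps^2)$ by the small-ball lower bound, and on this event $|\beta_s-\beta_t|\le 2\eps$ for all $s,t$, so that after regularising $G$ (or equivalently restricting to the off-diagonal region $|s-t|\ge\eta$) one obtains a lower bound $\gamma_G([0,1]^2)\ge c_2/\eps^2$. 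Setting $K=c_2/\eps^2$ yields $\pr{\gamma_G([0,1]^2)\ge K}\ge \exp(-(c_1/c_2) K)$ for large $K$, which shows \eqref{infinite-moment} for every $\lambda>c_1/c_2$ and simultaneously rules out a Gaussian (or more generally sub-exponential) limit.
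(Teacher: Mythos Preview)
Your outline has the right spirit but contains several genuine structural gaps that the paper's proof handles differently.

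\textbf{The decomposition does not yield an all-pairs sum.} Capacity satisfies no clean inclusion--exclusion: the paper's Proposition~\ref{pro:deccap} gives
\[
\cc{A\cup B}=\cc{A}+\cc{B}-\chi(A,B)-\chi(B,A)-\epsilon(A,B),
\]
and when iterated this does \emph{not} produce $\sum_{i<j}\chi(R_i,R_j)$ plus controllable ``triple terms''. Iterating once more gives e.g.\ $\chi(R_1\cup R_2,R_3)$, which is not $\chi(R_1,R_3)+\chi(R_2,R_3)$ up to lower order. The paper avoids this by iterating \emph{dyadically}: at each level $i$ one only has cross terms $\chi_n(i,j)$ between \emph{adjacent sibling} blocks, giving the hierarchical sum $\sum_{i=1}^p\sum_{j=1}^{2^{i-1}}\chi_n(i,j)$ of Proposition~\ref{pro:cprange}, not an all-pairs sum.

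\textbf{Fixed depth then $p\to\infty$, not $N(n)\to\infty$.} The paper takes a \emph{fixed} dyadic depth $p$, lets $n\to\infty$, and then sends $p\to\infty$. This sidesteps the ``main obstacle'' you identify (diagonal calibration) entirely: there is no mesoscopic scale to tune. The $p\to\infty$ limit works because the limiting object $\gamma_G(\mathcal C_1)$ is defined in Proposition~\ref{pro:conv} precisely as the $L^2$-limit of the same hierarchical sum of centred off-diagonal integrals.

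\textbf{The single-piece fluctuations need a variance bound, not the LLN.} Theorem~\ref{thm:lawoflargenumbers} gives a.s.\ convergence but no quantitative fluctuation control at scale $n/(\log n)^2$. The paper proves separately (Lemma~\ref{lem:var}) that $\E{(\overline{\cc{\RR_n}})^4}\lesssim n^4/(\log n)^8$, whence the sum of $2^p$ independent centred pieces has $L^2$-norm $\lesssim 2^{-p/2}\cdot n/(\log n)^2$, which vanishes as $p\to\infty$. Note this variance bound itself is bootstrapped from the dyadic decomposition and the second-moment bound on $\chi_n(1,1)$, so your sketch has a circularity.

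\textbf{Convergence of the cross terms is by the moment method, not Donsker directly.} The heuristic $\chi(R_i,R_j)\sim 2\,\cc{R_i}G_d(S_{t_i},S_{t_j})\cc{R_j}$ is not how the paper proceeds. Instead one localises the escape probabilities in $\chi$ to windows of length $n_{4\alpha}$ (Lemma~\ref{lem:deloc}), then computes all joint moments of the localised $\chi_{n,\alpha}(i,j)$ via the conditional non-intersection estimate of Proposition~\ref{lem:inter}, reducing them to products of Green functions (Lemma~\ref{lem:moments}). Donsker is used only at the very end (Lemma~\ref{lem:donsker}) to pass those Green-function sums to their Brownian counterparts. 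Carleman's condition (Proposition~\ref{pro:carleman}) is needed to convert moment convergence into convergence in law.

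\textbf{Non-Gaussianity.} Your left-tail remark is incorrect: the centring $\E{\int\!\int G(\beta_s,\beta_t)\,ds\,dt}$ is \emph{infinite} (this is exactly why the renormalisation is needed), so $\gamma_G$ is not deterministically bounded below. The paper's argument is indirect: from the self-similar decomposition $\gamma_G(\mathcal C_1)=(\gamma_1+\gamma_2+X-\mathbb E[X])/2$ with $\gamma_1,\gamma_2\stackrel{d}{=}\gamma_G(\mathcal C_1)$ and $X=\int\!\int_{A_{1,1}}G$, the lower moment bound $\mathbb E[X^p]\ge c^p p^p$ of Proposition~\ref{pro:carleman} forces $\mathbb E[e^{\lambda X}]=\infty$ for some $\lambda>0$, and Cauchy--Schwarz then transfers this to $\gamma_G$. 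Your small-ball confinement idea can be made to work for $X$ (and indeed underlies the moment lower bound), but not directly for the renormalised $\gamma_G$.
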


\begin{remark}\rm{
Although both terms appearing in the definition of
$\gamma_G([0,1]^2)$ have infinite expectation,
we make sense, in Section~\ref{sec:def}, of
$\gamma_G\big([0,1]^2\big)$ as an $L^2$-random variable following Le Gall's approach 
used to define the self-intersection local time, see \cite{LG85, LG86, LG88}. 
We also prove there 
that it has some infinite exponential moment, showing in particular that it is not Gaussian. 
}
\end{remark}

\begin{remark}
\rm{
Theorem~\ref{thm:clt} shows that the capacity of the range in $4$ dimensions behaves in the same way as the size of the range in $2$ dimensions as shown by Le Gall in '86~\cite{LG86}: 
\[
\frac{(\log n)^2}{n}\cdot 
(|\RR_n|- \E{|\RR_n|}) \quad \stackrel{(d)}{\Longrightarrow} \quad -\pi^2 \cdot \gamma([0,1]^2),
\]
where $\gamma([0,1]^2)$ is defined formally via
\[
\gamma([0,1]^2) = \int_0^1\int_0^1\delta_{(0)}(\beta_s - \beta_t)\,ds\,dt - \E{\int_0^1\int_0^1\delta_{(0)}(\beta_s - \beta_t)\,ds\,dt},
\]
where $\beta$ is a standard two-dimensional Brownian motion.}
\end{remark}

As a corollary of our result we obtain the asymptotic behaviour of the variance of~$\cp(\RR_n)$. 

\begin{corollary}\label{cor:var}
Let $S$ be a simple random walk in $\Z^4$. Then
\[
\lim_{n\to\infty}
\frac{(\log n)^4}{n^2}\vr{\cc{\RR_n}}=\frac{\pi^8}{16}\cdot \E{\left(\gamma_G\left([0,1]^2\right)\right)^2}.
\]
\end{corollary}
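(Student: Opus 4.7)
Set $X_n := \frac{(\log n)^2}{n}\big(\cc{\RR_n} - \E{\cc{\RR_n}}\big)$ and $X := -\frac{\pi^4}{4}\,\gamma_G([0,1]^2)$. Since
\[
\E{X_n^2} \ =\ \frac{(\log n)^4}{n^2}\,\vr{\cc{\RR_n}} \qquad\text{and}\qquad \E{X^2}\ =\ \frac{\pi^8}{16}\,\E{\gamma_G([0,1]^2)^2},
\]
Corollary~\ref{cor:var} is exactly the statement $\E{X_n^2} \to \E{X^2}$. Theorem~\ref{thm:clt} already provides the convergence of $X_n$ to $X$ in distribution, so the task is to promote this to convergence of second moments; by a standard criterion, this amounts to uniform integrability of the family $\{X_n^2\}_{n\ge 1}$.

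The plan is to obtain this uniform integrability by upgrading the mode of convergence in Theorem~\ref{thm:clt} from distributional to $L^2$. The natural proof of the central limit theorem, modelled on Le Gall's approach in \cite{LG86} to the volume of the range in dimension two, proceeds via an intermediate centered Riemann-sum approximation $Y_n$ of $X_n$ built from the discrete Green function evaluated along the diffusively rescaled walk, and shows separately that $Y_n \to X$ in $L^2$ and that the remainder $X_n - Y_n$ vanishes in $L^2$. Granted these two estimates, $X_n\to X$ in $L^2$, which gives $\E{X_n^2}\to\E{X^2}$ and hence the corollary.

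The nontrivial input is therefore the $L^2$ bound on the remainder $X_n - Y_n$, which lies at the heart of Theorem~\ref{thm:clt} and is the main obstacle here as well. It requires comparing a double sum $\sum_{i\neq j} G_d(S_i,S_j)$ (appropriately truncated, in the spirit of the fact that two independent four-dimensional Brownian paths do not intersect) with the double integral appearing in \eqref{formal-gammaG}, on the critical scale $n/(\log n)^2$. This in turn rests on sharp control of the difference $G_d(x,y) - G((x-y)/\sqrt n)$ together with the Le Gall-type regularization used in Section~\ref{sec:def} to define $\gamma_G([0,1]^2)$ as an $L^2$ random variable; once these $L^2$ estimates are in place for the proof of the CLT, Corollary~\ref{cor:var} is a direct by-product and requires no separate argument.
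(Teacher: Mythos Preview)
Your diagnosis of what is needed is correct: with $X_n \Rightarrow X$ from Theorem~\ref{thm:clt}, the corollary follows once you know $\{X_n^2\}$ is uniformly integrable. But the way you propose to obtain uniform integrability does not match the paper and, as written, is not a proof.

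You claim that the proof of Theorem~\ref{thm:clt} establishes $X_n\to X$ in $L^2$ via an intermediate approximation $Y_n$ with $Y_n\to X$ and $X_n-Y_n\to 0$ both in $L^2$, and that the corollary is then a ``direct by-product''. This is not how the paper proves the CLT. The argument in Section~\ref{sec:clt} uses the dyadic decomposition of Proposition~\ref{pro:cprange}, Proposition~\ref{prop.moments.chi} for convergence in law of the cross terms, and a double limit (first $p\to\infty$, then $n\to\infty$); it yields convergence in distribution only, not $L^2$ convergence of $X_n$ to $X$. The ``$L^2$ bound on the remainder $X_n-Y_n$'' you invoke is nowhere established, so your proposal amounts to assuming the conclusion.

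The paper's actual route is much shorter: Lemma~\ref{lem:var} gives the fourth-moment bound $\E{(\cc{\RR_n}-\E{\cc{\RR_n}})^4}\lesssim n^4/(\log n)^8$, i.e.\ $\sup_n \E{X_n^4}<\infty$. This immediately yields uniform integrability of $\{X_n^2\}$, and combined with $X_n\Rightarrow X$ one gets $\E{X_n^2}\to\E{X^2}$. That is the entire proof. If you want to repair your argument, replace the speculative $L^2$-upgrade with a direct appeal to Lemma~\ref{lem:var}.
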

We note that the limiting term in this corollary is nonzero. 

Seen as a normalised hitting probability, the
capacity of the range of a walk is an object which appeared
in disguised form in topics linked with {\it intersection
of paths of random walks}. This latter topic grew already large
in the nineties, as Lawler's '91 book~\cite{Law91} testifies. One reason for
that is the many diverse sources of motivation:
(i) quantum field theories
with a seminal insight of Symanzik~\cite{Symanzik}, and with contributions by
Lawler~\cite{Law82,Law85}, Aizenman \cite{A}, 
Felder and Fr\"olich \cite{FF}, to name a few
(see the book \cite{FFS} for a historical account and references therein), 
(ii) probability and the non-Markovian model of {\it self-avoiding walk},
with contributions from Brydges and Spencer~\cite{BS}, Madras and Slade~\cite{MadrasSlade}, and Lawler~\cite{LawlerSAW},
(iii) conformal field theories, and the intersection exponents
relations in dimension two and with contributions by Duplantier and Kwon~\cite{DK}, Duplantier~\cite{duplantier1998random},  Burdzy and Lawler~\cite{BL}, and
Lawler, Schramm and Werner~\cite{LSW} and references therein.

The models studied can be either discrete random walks, or
their continuous counterpart, the Wiener sausages.
In the mid-nineties, Aizenman~\cite{A}, Albeverio and Zhou~\cite{AZ}, Pemantle, Peres and Shapiro~\cite{PPS} and Khoshnevisan~\cite{K} 
proposed useful methods and estimates for the intersection of two
Wiener sausages. These estimates were important in
understanding {\it how small} was the trace of a Brownian motion. 
In 2004, van den Berg, Bolthausen and den Hollander~\cite{BBH04} studied the
upward deviations for the volume of intersection of two Wiener
sausages, and established a Large Deviations Principle. More recently Erhard and Poisat~\cite{EP} also established large 
deviations estimates for the capacity of a Wiener sausage. 

Recently, there has been a revival of problems linked
with {\it intersection of paths}. The model
of random interlacements was invented by Sznitman~\cite{Sznitman} 
initially to study the trace of a walk, living in $d$-dimensional
torus of side $N$ for a time $N^d$.
Sznitman introduced a measure on infinite paths on the infinite lattice
of random walks whose probability of avoiding any given set is
proportional to the exponential of minus
its Newtonian capacity. Recently
Rath and Sapozhnikov~\cite{RS}, and Chang and Sapozhnikov~\cite{CS} 
established moments and deviation bounds for the capacity of the union of
ranges of paths. In~\cite{Asselah:2016hc} we observed that the precise two sided non-intersection 
bounds of Lawler \cite{Law91} yield in dimension four, 
\begin{equation}
\label{LLN1moment}
\lim_{n\to\infty} \, \frac{\log n}{n} \ \mathbb E[\cp(\RR_n)]\ =\ \frac{\pi^2}{8}. 
\end{equation}
Soon after this, 
Chang \cite{Chang} obtained a sharp estimate on the
second moment: 
        \[
\lim_{n\to\infty}\ 
\frac{\mathbb E[\cp(\RR_n)^2]}{\mathbb E[\cp(\RR_n)]^2} \ =\ 1, 
        \]
implying a weak law of large numbers. 
Chang \cite{Chang} also established a fluctuation result in dimension three by coupling the walk and the Brownian motion: 
\begin{equation}
\label{CLTd3}
\frac{\cp(\RR_n)}{\sigma \sqrt n} \quad 
\stackrel{(d)}{\Longrightarrow} \quad \cp(\beta[0,1]),
\end{equation}
with $\sigma$ some renormalising constant and $\beta[0,1]$ the trace of a three-dimensional Brownian motion between time $0$ and $1$. In \cite{Asselah:2016hc}, we also proved a standard central limit theorem in dimension larger than or equal to $6$ (with a standard $\sqrt n$ normalising factor, and a Gaussian limit), while the law of large numbers had already been obtained in dimension $5$ and larger by Jain and Orey \cite{JO69}, almost fifty years ago. 
A striking correspondence emerges: all these results for the capacity of the range are analogous 
to results for the volume of the range, see \cite{DE, JP, LG86}, 
but only after dropping space dimension by two units to go from capacity to volume of the range. 
The remaining open issue is the central limit theorem for the capacity of the range in dimension $5$.

Recently, van den Berg, Bolthausen and den Hollander \cite{BBH16} 
advocated a new geometric characteristic, the torsional
rigidity of the complement of a Wiener sausage,
as a way to probe the shape of the sausage.
In order to obtain leading asymptotics for the torsional
rigidity, one needs a law of large numbers for the capacity
of a Wiener sausage, which is not proved yet in dimension four; see however our companion paper \cite{ASS16+} 
for a partial result in this direction. Our Theorem \ref{thm:lawoflargenumbers} establishes these 
asymptotics for the discrete model, and thus prepares the study of torsional rigidity for random walk.

Our own motivation for studying the capacity of the range of
a random walk comes from studying a random
walk conditioned on being localised during a time-period $[0,N]$
in a ball of volume of order~$N$~\cite{AmineBruno}.
In this regime the localised walk
necessarily intersects often its own path, and one of the 
main technical estimates in \cite{AmineBruno} concerns
the event of visiting a set $\Lambda$ made up of non-overlapping balls of
fixed radius. We establish that visiting each ball, making up $\Lambda$,
the same number of times is related to the capacity of $\Lambda$.
This allowed us to obtain rough estimates on the capacity of the range
of a localised walk, and convinced us that the capacity of the range 
was a relevant object to consider.

\textbf{Heuristics.} We wish now to explain at a heuristic level
the scaling of the capacity in $d=4$, as well as the reason for
our Central Limit Theorem. Along the way, we present a simple
decomposition formula for the capacity of two finite sets,
and highlight the connection with the volume of the range in~$d=2$. 

Let us start by explaining in simple terms why the
scaling of the capacity of the range is $n/\log(n)$ in dimension 4.
Consider \reff{def-cap} where $A=\RR_n$, and observe that it is
enough to consider the site $x$ on the boundary of a ball, say of
radius $R$, containing the set $A$: a walker coming from infinity 
basically spreads uniformly
on the boundary of such a ball when it hits it, and \reff{def-cap} is
almost correct when considering $x$ uniformly distributed on the
boundary of ball of radius $R$ and normalised by~$\Gd(0,R)$, since
$\Gd(0,y)/\Gd(0,R)$ is the probability of eventually hitting the ball of 
radius $R$ when starting at $y$.
Now, during a time-period $[0,n]$, the walk typically stays in a ball of radius 
$R=\sqrt n$, and we consider $R$ of this order. {\it We need
therefore to estimate the probability that two independent walks
starting at a distance $\sqrt n$ meet}. More precisely, we need
to estimate 
\[
\bP_{0,x}( \RR[0,n]\cap \til{\RR}[0,\infty)\not=\emptyset),\quad
\text{with}\quad \|x\|\sim \sqrt n.
\]
To estimate this intersection event, Lawler~\cite{Law91} counts the number of times the two paths intersect.
 Its expectation is expressed
as a product of the probability the two walks meet times the mean number of meetings
after the first one, that is when they start from the same point. Then,
simple computations give the following  
\[
\mathbb{E}_{0,0}[| \RR[0,n]\cap \til{\RR}[0,\infty)|]\asymp \log n,
\quad\text{and}\quad
\mathbb{E}_{0,x}[| \RR[0,n]\cap \til{\RR}[0,\infty)|]\asymp 1,
\quad\text{when}\quad \|x\|\sim \sqrt n.
\]
Then, the order of the probability of intersection is obtained by taking the ratio
of the two previous quantities, and dividing by $\Gd(0,R)$ which is of order $1/R^2$. This is established rigorously in Section~4.3 of Lawler \cite{Law91}. The
scaling for the capacity follows, at least heuristically.

A simple and key observation of Le Gall~\cite{LG86}, using the symmetry of the increments and translation invariance
of the lattice, is that the range $\RR[0,2n]$ translated by $S(n)$, is the union of two 
independent ranges: $\RR^1_n=\RR[0,n]-S(n)$ and $\RR^2_n=\RR[n,2n]- S(n)$,
which yields by the exclusion-inclusion formula 
\begin{equation}\label{incl.excl}
|\RR^1_n\cup \RR^2_n|=|\RR^1_n|+|\RR^2_n|-|\RR^1_n\cap \RR^2_n|.
\end{equation}
This is the starting block of Le Gall's proof. 
Our starting point is that the capacity of the range 
obeys a decomposition formula which plays exactly the same role as the 
exclusion-inclusion formula does for the volume of the range.
\begin{proposition}\label{pro:deccap}
Let $A$ and $B$ be two finite subsets of $\Z^d$. We have
\begin{equation}\label{dec.intro}
\cc{A\cup B} = \cc{A} + \cc{B} - \chi(A,B) -\chi(B,A)-\epsilon(A,B),
\end{equation}
where
\[
\chi(A,B) = \sum_{y\in A} \sum_{z\in B} \prstart{H^+_{A\cup B}=
\infty}{y} \Gd(y,z)\prstart{H_B^+=\infty}{z},
\]
and $0\leq\epsilon(A,B) \leq\cc{A\cap B}$.
\end{proposition}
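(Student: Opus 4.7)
The plan is to derive the decomposition from first principles, starting with the definition \eqref{def-cap} of capacity. I would first write
\[
\cc{A\cup B}\;=\;\sum_{x\in A\cup B}\prstart{H_{A\cup B}^+=\infty}{x}
\]
and split the summation via the indicator identity $\ind_{A\cup B}=\ind_A+\ind_B-\ind_{A\cap B}$. This naturally identifies $\epsilon(A,B):=\sum_{x\in A\cap B}\prstart{H_{A\cup B}^+=\infty}{x}$; the bounds $0\le\epsilon(A,B)\le\cc{A\cap B}$ follow at once from the monotonicity $\{H_{A\cup B}^+=\infty\}\subseteq\{H_{A\cap B}^+=\infty\}$ (since $A\cap B\subseteq A\cup B$).

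For each $x\in A$, the elementary identity
\[
\prstart{H_A^+=\infty}{x}\;=\;\prstart{H_{A\cup B}^+=\infty}{x}+\prstart{H_A^+=\infty,\,H_B^+<\infty}{x}
\]
yields, upon summation, $\sum_{x\in A}\prstart{H_{A\cup B}^+=\infty}{x}=\cc{A}-\alpha(A,B)$ with $\alpha(A,B):=\sum_{x\in A}\prstart{H_A^+=\infty,\,H_B^+<\infty}{x}$, and analogously for $B$. The proposition then reduces to matching $\alpha(A,B)+\alpha(B,A)$ with $\chi(A,B)+\chi(B,A)$.

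The heart is to realise $\alpha(A,B)$ in a form involving the free Green function. I would apply a last-passage decomposition: on the event $\{H_A^+=\infty,\,H_B^+<\infty\}$ the last visit $T=L_{A\cup B}$ is a.s.\ finite by transience and must lie in $B$, since the walker never revisits $A$. Conditioning on $T=n$, $S_T=z\in B$ and applying the strong Markov property at $T$ factors out $\prstart{H_{A\cup B}^+=\infty}{z}$ for the future trajectory, while the past contribution summed over $n$ produces $G^A(x,z):=\sum_n\prstart{S_n=z,\,H_A^+>n}{x}$, the Green function of the walk killed on returning to $A$. I would then convert $G^A$ into $G_d$ using the last-exit relation $G_d(x,z)=G^A(x,z)+\sum_{y\in A}\prstart{H_A^+<\infty,\,S_{H_A^+}=y}{x}\,G_d(y,z)$, and combine it with the reversibility identity $\sum_{z\in B}G_d(y,z)\prstart{H_B^+=\infty}{z}=\prstart{H_B<\infty}{y}$ (itself a straightforward last-visit computation) to bring the sum into the $\chi$-form.

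The main obstacle lies in the contributions supported on $A\cap B$: terms produced by the $G^A\to G_d$ conversion on the overlap must be tracked carefully against the analogous contributions from the $\alpha(B,A)\leftrightarrow\chi(B,A)$ identification and the diagonal piece from the initial split. Here the cleanest tool is the equilibrium characterisation $G_d\nu_{A\cup B}\equiv 1$ on $A\cup B$, combined with the symmetry of $G_d$: this allows the reorganised cross terms to collapse cleanly, and the residual overcount to be absorbed into the $\epsilon(A,B)$ correction already controlled by $\cc{A\cap B}$ through the monotonicity argument above.
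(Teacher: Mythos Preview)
Your route is genuinely different from the paper's. The paper works with the hitting-from-infinity representation \eqref{decomp-1}: it writes
\[
\bP_x(H_{A\cup B}<\infty)=\bP_x(H_A<\infty)+\bP_x(H_B<\infty)-\bP_x(H_A<\infty,\,H_B<\infty),
\]
splits the last term according to whether $H_A<H_B$, $H_B<H_A$, or $H_A=H_B$, and then applies the harmonic-measure formula \eqref{decomp-2} together with last passage \eqref{last.passage} to produce the cross terms; the $\epsilon$-piece is identified as $\lim_{\|x\|\to\infty}\bP_x(H_A=H_B<\infty)/G_d(0,x)$. You instead work with the escape-probability sum and a last-visit decomposition, which avoids any limiting procedure and is more direct. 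Your explicit candidate $\epsilon(A,B)=\sum_{x\in A\cap B}\bP_x(H^+_{A\cup B}=\infty)$ is in fact exactly what the paper's limit evaluates to (apply \eqref{decomp-2} to $\{S_{H_{A\cup B}}\in A\cap B\}$), so the two approaches match at the level of the error term.

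There is, however, a sign issue in your final bookkeeping that you should not wave away. Carrying your computation through --- last visit plus reversibility give $\alpha(A,B)=\sum_{z\in B\setminus A}e_{A\cup B}(z)\,\bP_z(H_A<\infty)$, and then $G_d\nu_A\equiv 1$ on $A$ yields $\alpha(A,B)=\chi(B,A)-\delta$ with $\delta:=\sum_{x\in A\cap B}e_{A\cup B}(x)$, and symmetrically $\alpha(B,A)=\chi(A,B)-\delta$ --- one arrives at
\[
\cc{A\cup B}=\cc A+\cc B-\chi(A,B)-\chi(B,A)+\delta,
\]
so the residual enters with the \emph{opposite} sign to what you announce (take $A=B$ to see $\epsilon<0$). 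The paper's own derivation in fact produces the cross terms with $y$ ranging over $A\setminus B$ (resp.\ $B\setminus A$) rather than all of $A$; with that reading of $\chi$, your $\epsilon=\delta$ lands with the correct sign and your monotonicity bound $0\le\epsilon\le\cc{A\cap B}$ is exactly right. So your argument is sound, but the ``absorb the overcount into $\epsilon$'' step only closes cleanly for the $A\setminus B$ form of $\chi$, which is what the paper's proof actually establishes.
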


\begin{remark}\rm{
The equalities \reff{incl.excl} and \reff{dec.intro} explain the striking correspondence between asymptotics
for the volume and the capacity of the range. As long as the term $\epsilon$ in \reff{dec.intro} is innocuous, the order of magnitude of the 
cross term $\chi(\RR^1_n,\RR^2_n)$ 
in dimension $d+2$ is shown to be the same as the order of magnitude of the intersection term $|\RR^1_n\cap \RR^2_n|$ in dimension~$d$. }
\end{remark}

\begin{remark}\rm{
We intend to apply \reff{dec.intro}
with $A=\RR^1_n$ and $B=\RR^2_n$.
In dimension 4, it turns out that $\epsilon(A,B)$ is innocuous.
One classical inequality $\cc{A\cup B} \le \cc{A} + \cc{B} - \cc{A\cap B}$
 (see Proposition 2.3.4 of Lawler \cite{Law91})
misses in this context the $\chi$ terms in \reff{dec.intro} which dominate the fluctuations.}
\end{remark}
As in the CLT proof for the volume of the range in dimension 2  \cite{LG86},
we iterate \eqref{dec.intro} and write the capacity of the range as the sum of a rescaled
self-similar part, consisting of a sum of independent and (almost) identically distributed terms, plus a sum of cross terms. 
Our proofs establish that for the law of large numbers it is
the self-similar part which dominates, the cross terms being of smaller order than $n/\log n$, 
while for the central limit theorem, it is the opposite situation:
the fluctuations of the self-similar part are negligible compared
to those of the cross terms of order $n/\log^2 n$. 
This striking phenomenon is exactly the same as the one 
Le Gall discovered some thirty years ago, 
when proving the central limit theorem for the volume of the range \cite{LG86}. 

We are now in a position to shed some light on the form of our CLT.
We consider $\chi(\RR^1,\RR^2)$ and expect, as
Theorem~\ref{thm:lawoflargenumbers} essentially teaches,
that typically and to leading order
\[
\text{for}\quad x\in \RR^1_n,\quad
\bP_x\big(H^+_{\RR^1_n\cup \RR^2_n}=\infty\big) \ \sim\ \frac{\pi^2}{8}
\cdot \frac{1}{\log n},\quad
\text{for}\quad y\in \RR^2_n,\quad
\bP_y\big(H^+_{\RR^2_n}=\infty\big) \ \sim\ \frac{\pi^2}{8}
\cdot \frac{1}{\log n}.
\]
Note that in dimension higher than four, typically for $x\in \RR[0,n]$,
$\prstart{H^+_{\RR[0,n]}=\infty}{x}$ is rather of order 1.
Our key technical estimates is then to make the escape events into
local events (in a space scale much smaller than $\sqrt n$),
and thus transform the intersection term in a term looking
to leading order like
\begin{equation}\label{sketch-4}
\Big(\frac{\pi^2}{8}\cdot 
\frac{1}{\log n}\Big)^2\cdot 
\sum_{x\in \RR^1_n}\sum_{y\in \RR^2_n} G_d(x,y).
\end{equation}
The expression \reff{sketch-4}, in conjunction with
the decomposition \reff{dec.intro} which we iterate, 
explains heuristically the form \reff{formal-gammaG}.

The rest of the paper is organised as follows. In Section 2, we start by recalling
known estimates on Green's kernel, and deriving useful simple estimates
on random walks. Then we present the proof of Proposition~\ref{pro:deccap}.
The Strong Law of Large Numbers is established in Section~\ref{sec:slln}
after a rough second estimate is obtained for the cross term. 
Section~\ref{sec:def} studies the limiting object in the CLT.
Section~\ref{sec:nonint} presents our non-intersection events --
Proposition~\ref{lem:inter} which generalises 
Lawler's Theorem~\ref{thm:lawler}. Section 6 presents the
asymptotics for the cross term obtained by the method of moments.
Section~\ref{sec:clt} establishes the CLT based on
estimates of Section 6, and the recursive use of the decomposition.
Finally Section 8 gathers computations linked with Section~\ref{sec:nonint}.

\section{Preliminaries}
\subsection{Notation and standard estimates}

We mostly use the symbol $S$ to denote a random walk, and 
use both notation $S_k$ and~$S(k)$ to denote its position at time $k$.
When $0\leq a\leq b$ are real numbers, $\RR[a,b]$ denotes $\RR[[a],[b]]$, 
where~$[x]$ stands for the integer part of $x$. 
We also write $\RR_a$ for $\RR[0,[a]]$, and $S(n/2)$ for $S([n/2])$.

For positive functions $f,g$ we write $f\sim g$ if $f(n)/g(n)\to 1$ as $n\to\infty$. We also write $f(n) \lesssim g(n)$ if there exists a constant $c > 0$ such that $f(n) \leq c g(n)$ for all $n$,
and $f(n) \gtrsim g(n)$ if $g(n) \lesssim f(n)$.  Finally, we use the notation $f(n) \asymp g(n)$ if both $f(n) \lesssim g(n)$ and $f(n) \gtrsim g(n)$.

For $\alpha>0$, and $n\ge 2$, we note $n_\alpha := n \cdot (\log n)^{-\alpha}$.

The Euclidean norm of $x\in \Z^4$ is denoted $\|x\|$, and the Euclidean ball of center $x$ and radius $r$ is denoted $\mathcal B(x,r)$. We denote by $\bP_x$ the law of a simple random walk starting from $x$, and simply write $\bP$ when $x=0$. 
Likewise $\bP_{x,x'}$ denotes the law of two independent random walks starting from~$x$ and $x'$, and similarly when there are more walks. 
Recall that $H_A$ denotes the hitting time of a set $A$, and we abbreviate this in $H_x$ when $A$ is reduced to a single point $x\in \Z^d$. 

We write 
$$p_k(x,y) = \bP_x(S_k=y).$$
The function $p_k$ is symmetric in $x$ and $y$, and one has $p_k(x,y)=p_k(0,y-x):=p_k(y-x)$. Define 
\begin{equation}
\label{fk}
f_k(x) = \frac{8}{\pi^2k^2}\exp\left(-2\frac{\norm{x}^2}{k} \right).
\end{equation}
A well-known estimate, see Proposition 2.1.2 (b) in \cite{LawLim}, 
shows that for some positive constants $c$ and $C$
\begin{equation}
\label{upper.heatkernel}
\forall k\geq 1,\qquad
\bP\left(\max_{\ell \le k} \, \|S_\ell\|\ge r\right)
\ \le \ C\cdot e^{-c\cdot r^2/k}.
\end{equation}
Furthermore, for any fixed $\alpha<2/3$, one has for all $k\ge 1$ and $x$ with $p_k(x)>0$ and $\|x\|\le k^\alpha$, (see Proposition 1.2.5 in \cite{Law91}): 
\begin{equation}
\label{localCLT}
p_k(x) = f_k(x)(1+ \mathcal O(k^{3\alpha-2})).
\end{equation}
One deduces in particular the following useful estimate: 
\begin{equation}
\label{dispertion}
\bP (\|S_k\|^2\le k/R) \ =  \ \mathcal O(R^{-2}) . 
\end{equation}
The discrete Green's function $G_d$ is defined by 
$$G_d(x,y) \ =\ \sum_{k\ge 0} \bP_x(S_k=y)\quad\text{and if }
x\not= y,\quad G_d(x,y) =\ \bP_x(H_y<\infty)\cdot G_d(0,0).$$
We also write $G_d(x)=G_d(0,x)$, and recall that $G_d$ is symmetric, and satisfies $G_d(x,y)=G_d(y-x)$.

The continuous Green's function $G(x,y)$ is also symmetric and satisfies $G(x,y)=G(0,y-x):=G(y-x)$. 
It is defined for $z\in \R^4$ non zero, by 
 \begin{equation}
\label{Green.cont}
G(z) \ =\ \frac{1}{2\pi^2}\cdot \frac{1}{\|z\|^2}.
\end{equation}
These two functions are linked by the relation (see Theorem 4.3.1 in \cite{LawLim}):
for $x\in \Z^4$ 
\begin{equation}
\label{Green.bound}
G_d(x) = 4G(x) + \mathcal O\left(\frac{1}{1+\|x\|^4}\right).
\end{equation}
We will also use the following 
(see Proposition 6.5.1 and 6.5.2 in \cite{LawLim}): 
there exists a constant $C>0$, such that for all $x$ and $r>0$, 
\begin{equation}
\label{hit.ball}
\bP_x(H_{\mathcal B(0,r)}<\infty) \ \le\ C\cdot \frac{r^2}{1+\|x\|^2}.
\end{equation} 
Finally we prove two useful estimates on the heat kernel $p_k(x)$:

\begin{claim}\label{cl:pik}
	Let $x\in \Z^d$ and $k\in \N$ be such that $\sqrt{k}\leq \|x\|\leq k^{3/5}$. Then there exists a positive constant $C$ (independent of $x$) so that for all $i\leq k$ we have
	\[
	p_i(x) \ \leq\  C\,  f_k(x).
	\]
\end{claim}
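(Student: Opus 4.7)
The plan is to combine the local CLT \eqref{localCLT} with a Gaussian upper bound on $p_i$ and a monotonicity property of $f_k$ in $k$. I would first record two preliminary facts. \emph{(a) Monotonicity}: $f_i(x) \leq f_k(x)$ whenever $i \leq k$ and $\|x\|^2 \geq k$; indeed, with $u = k/i \geq 1$ and $a = \|x\|^2/k \geq 1$, the ratio $f_i(x)/f_k(x) = u^2 e^{-2a(u-1)}$ has $u$-derivative $2u(1-au)e^{-2a(u-1)} \leq 0$ on $[1,\infty)$ (since $a \geq 1$), so it is bounded by its value $1$ at $u = 1$. \emph{(b) Gaussian upper bound}: there exist $C_0, c_0 > 0$ with
\[
p_i(z) \leq C_0\, i^{-2} \exp(-c_0 \|z\|^2/i) \qquad \text{for all } i \geq 1,\ z \in \Z^d.
\]
To get (b) I would first derive the uniform bound $\sup_z p_j(z) \leq C' j^{-2}$ (using \eqref{localCLT} for $\|z\| \leq j^{3/5}$ and \eqref{upper.heatkernel} for $\|z\| > j^{3/5}$), then split the walk at time $\lfloor i/2 \rfloor$: since $S_i = x$ forces $\|S_{\lfloor i/2 \rfloor}\| \geq \|x\|/2$ or $\|S_i - S_{\lfloor i/2 \rfloor}\| \geq \|x\|/2$, bounding one factor of the resulting convolution by $\sup_z p$ and the other by \eqref{upper.heatkernel} produces the Gaussian tail.

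With (a) and (b) in hand, I would split the range of $i$ at $\eta k$ with $\eta := c_0/4$, absorbing the finitely many small $k$ into the final constant. \emph{Case $i \geq \eta k$.} Here $i \asymp k$ and $\|x\| \leq k^{3/5} \lesssim i^{3/5}$, so $\|x\| \leq i^{13/20}$ once $k$ is past a threshold depending on $c_0$; \eqref{localCLT} with $\alpha = 13/20 < 2/3$ gives $p_i(x) \leq C_1 f_i(x)$, and (a) upgrades this to $p_i(x) \leq C_1 f_k(x)$. \emph{Case $i < \eta k$.} Then $u = k/i > 4/c_0$, so $c_0 u - 2 \geq c_0 u/2 \geq 2$; since $a \geq 1$, (b) yields
\[
\frac{p_i(x)}{f_k(x)} \lesssim u^2 \exp\!\big(-a(c_0 u - 2)\big) \leq u^2 \exp(-c_0 u/2),
\]
which is bounded uniformly in $u \geq 0$.

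The main obstacle is the mismatch between the unspecified constant $c_0$ from (b) and the sharp constant $2$ in the exponent of $f_k$: this forces the two-regime split, and makes the $i^{-2}$ prefactor in (b) essential. Relying only on the cruder estimate \eqref{upper.heatkernel} (without the $i^{-d/2}$ improvement) would not produce the $k^{-2}$ factor present in $f_k$ when $i \ll k$.
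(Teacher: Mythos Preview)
Your proof is correct. The route differs from the paper's in organization rather than in spirit, but the differences are worth noting.

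The paper splits into three regimes: $i\le k^{1-\varepsilon}$, $k^{1-\varepsilon}\le i\le k/2$, and $k/2\le i\le k$. In the first regime it uses only the crude tail bound \eqref{upper.heatkernel}, with no polynomial prefactor; the missing $k^{-2}$ is recovered because $\|x\|^2/i\ge k^\varepsilon$, so the exponential alone already decays faster than any power of $k$. The middle regime is handled by the local CLT plus an explicit algebraic manipulation (writing $i^{-2}e^{-2\|x\|^2/i}$ as a product and using that $y^2e^{-y}$ is bounded), which is your monotonicity fact (a) done by hand.

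Your two-regime split is cleaner but requires the auxiliary Gaussian upper bound (b) with the $i^{-2}$ prefactor; as you correctly observe, in your split the small-$i$ regime reaches all the way up to $\eta k$, where the bare bound \eqref{upper.heatkernel} would no longer furnish the $k^{-2}$. The paper sidesteps (b) by pushing the boundary of the ``small $i$'' regime down to $k^{1-\varepsilon}$, at the price of an extra middle case. Your monotonicity observation (a) is a tidy replacement for the paper's explicit computation in that middle case, and arguably the more reusable formulation.

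One cosmetic remark: your closing sentence could be read as claiming that \eqref{upper.heatkernel} alone can never recover the $k^{-2}$; that is only true for \emph{your} choice of threshold $\eta k$, not intrinsically.
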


\begin{proof}[\bf Proof]
Suppose first that $i\le k^{1-\varepsilon}$, for some $\varepsilon>0$ to be fixed later. Then one can use \eqref{upper.heatkernel} which gives 
\begin{align*}
p_i(x) \, \le\, \bP(\|S_i\|\ge \|x\|)\, \lesssim \, \exp\left(-c\frac{\|x\|^2}{i}\right)\, \lesssim \, \exp\left(-\frac c2 k^{\varepsilon}-2\frac{\|x\|^2}{k}\right)\, \lesssim\,  f_k(x),
\end{align*}
using for the third inequality that for $k$ large enough,  $ \|x\|^2/i \ge \max(k^{\varepsilon},(2/c)\cdot \|x\|^2/k)$.

Suppose next that $k^{1-\varepsilon} \le i\leq k/2$. 
Now choose $\varepsilon$ such that 
$3/(5(1-\varepsilon))<2/3$. Then one can use the local CLT \eqref{localCLT}, which gives
\begin{align*}
	p_i(x) \, \lesssim\,  \frac{1}{i^2}\exp\left( -\frac{2\norm{x}^2}{i} \right)\,  =\,  \frac{\norm{x}^4}{i^2} \exp\left(-\frac{\norm{x}^2}{i} \right) \cdot \frac{1}{\norm{x}^4}\exp\left(-\frac{\norm{x}^2}{i} \right).
\end{align*}
Using that the function $y^2e^{-cy}$ is upper bounded by a constant, the assumption that $i\leq k/2$ and $\norm{x}\geq \sqrt{k}$, we obtain
\begin{align*}
	p_i(x)\lesssim \frac{1}{k^2}\exp\left( - \frac{2\norm{x}^2}{k} \right)
\end{align*}
and this completes the proof in the case $i\leq k/2$.

Suppose finally that $k/2\leq i\leq k$. Then, we conclude the proof
\[
p_i(x) \lesssim \frac{1}{i^2}\exp\left( - \frac{2\norm{x}^2}{i} \right) \lesssim \frac{1}{k^2} \exp\left(-\frac{2\norm{x}^2}{k} \right).
\]
\end{proof}

Recall next the notation $n_\alpha = n/(\log n)^\alpha$.

\begin{claim}\label{cl:usefulpikx}
	Let $i,k,n \in\N$ and $x,z\in \Z^4$ satisfy $k\geq n_\alpha$, $\norm{x}\leq \sqrt{n}(\log n)^2$, $i\leq k/(\log n)^{\beta}$ and $\norm{z}\leq \sqrt{i}(\log n)^{\gamma}$ with $\alpha, \beta$ and $\gamma$ satisfying $\beta>\alpha+4$ and $4+2\gamma+\alpha-\beta<0$. Then we have as~$n\to \infty$
	\[
	f_{k-i}(x-z) = f_k(x) \cdot (1+o(1)).
	\]
\end{claim}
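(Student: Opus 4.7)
The plan is to form the ratio $f_{k-i}(x-z)/f_k(x)$ and show both its polynomial and exponential parts tend to $1$. Using the definition \eqref{fk},
\[
\frac{f_{k-i}(x-z)}{f_k(x)} \;=\; \frac{k^2}{(k-i)^2}\cdot\exp\!\left(-2\,\frac{\|x-z\|^2}{k-i}+2\,\frac{\|x\|^2}{k}\right).
\]
Since $i/k\le(\log n)^{-\beta}\to 0$, the prefactor $k^2/(k-i)^2$ clearly tends to $1$, and in particular $k-i\ge k/2$ for $n$ large. It remains to check that the exponent tends to $0$.

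First I would expand $\|x-z\|^2=\|x\|^2-2\langle x,z\rangle+\|z\|^2$ and rewrite
\[
-2\,\frac{\|x-z\|^2}{k-i}+2\,\frac{\|x\|^2}{k}
=\;-\,\frac{2\|x\|^2\,i}{k(k-i)}\;+\;\frac{4\langle x,z\rangle}{k-i}\;-\;\frac{2\|z\|^2}{k-i},
\]
and then I would bound each of the three terms separately using the hypotheses. For the first term, since $\|x\|^2\le n(\log n)^4$, $n/k\le(\log n)^\alpha$ and $i/(k-i)\le 2(\log n)^{-\beta}$, one gets
\[
\frac{2\|x\|^2\,i}{k(k-i)}\;\le\;4(\log n)^{\,4+\alpha-\beta},
\]
which tends to $0$ by the assumption $\beta>\alpha+4$. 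For the cross term, Cauchy--Schwarz, $\|z\|\le\sqrt i(\log n)^\gamma$, $\sqrt i\le\sqrt{k}(\log n)^{-\beta/2}$ and $n/k\le(\log n)^\alpha$ give
\[
\frac{4|\langle x,z\rangle|}{k-i}\;\le\;\frac{8\sqrt{n\,i}\,(\log n)^{2+\gamma}}{k}\;\le\;8(\log n)^{\,2+\gamma+\alpha/2-\beta/2},
\]
which tends to $0$ precisely because $4+2\gamma+\alpha-\beta<0$. The third term is
\[
\frac{2\|z\|^2}{k-i}\;\le\;\frac{4\,i\,(\log n)^{2\gamma}}{k}\;\le\;4(\log n)^{\,2\gamma-\beta},
\]
which tends to $0$ as well since $\beta>\alpha+4+2\gamma\ge 2\gamma$. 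Adding the three bounds, the exponent tends to $0$, and hence $f_{k-i}(x-z)/f_k(x)\to 1$, proving the claim.

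I do not anticipate any real obstacle: the claim is a routine Taylor-type expansion of the Gaussian density $f_k$, and the two numerical conditions on $(\alpha,\beta,\gamma)$ are tailored so that the drift term $\|x\|^2 i /k^2$ and the cross term $\langle x,z\rangle/k$ are both poly-logarithmically small. The only mild care needed is to use $k-i\ge k/2$ (valid once $\beta$ is positive and $n$ is large) so that all denominators can be replaced by $k$ up to a factor of $2$.
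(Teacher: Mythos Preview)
Your proof is correct and follows essentially the same approach as the paper: both form the ratio (or equivalently the difference of exponents), expand $\|x-z\|^2$, and bound the three resulting terms $\|x\|^2 i/(k(k-i))$, $\langle x,z\rangle/(k-i)$, and $\|z\|^2/(k-i)$ by powers of $\log n$ using the hypotheses. Your exponent for the cross term, $2+\gamma+\alpha/2-\beta/2$, is in fact the right one (the paper's printed exponent $2+\gamma-\alpha/2-\beta/2$ appears to be a sign slip; the stated hypothesis $4+2\gamma+\alpha-\beta<0$ matches your bound).
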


\begin{proof}[\bf Proof]
First, $k$ and $i$ depend on $n$, and as $n\to\infty$
	\[
\frac{1}{k^2} = \frac{1}{(k-i)^2} \cdot (1+o(1)). 
\]
We next turn to the exponential terms in the expression for $f_k$. We have
\begin{align*}
	\frac{\|x\|^2}{k} - \frac{\|x-z\|^2}{(k-i)} = \frac{(k-i)\|x\|^2 - k(\|x\|^2 - 2\langle x,z\rangle + \|z\|^2)}{k(k-i)} = \frac{2k\langle x,z\rangle - k\|z\|^2 - i\|x\|^2}{k(k-i)}.
\end{align*}
It suffices to prove that this last expression tends to $0$ as $n\to\infty$.
By the assumption
\begin{align*}
	\frac{i \|x\|^2}{k(k-i)} \lesssim (\log n)^{\alpha+4-\beta} \quad  \text{ and } \quad  \frac{k\norm{z}^2}{k(k-i)}\lesssim (\log n)^{2\gamma-\beta},
\end{align*}
and since $\alpha +4<\beta$ and $2\gamma<\beta$ they both tend to $0$. Finally, by Cauchy-Schwarz we get
\begin{align*}
	\frac{|\langle x,z\rangle|}{k-i} \leq \frac{\|x\| \|z\|}{k-i} \lesssim (\log n)^{2+\gamma-\frac{\alpha}{2}-\frac{\beta}{2}} \to 0 \,\text{ as } n\to\infty,
\end{align*}
again by using the assumption on $\alpha, \beta$ and $\gamma$ and this completes the proof. 
\end{proof}

\subsection{A decomposition formula for the capacity}\label{sec:new}
Recall the last passage decomposition formula (see for instance Proposition 4.6.4 in \cite{LawLim}): for any finite set $A\subseteq \Z^d$ and $x\notin A$,
\begin{equation}
\label{last.passage}
\bP_x(H_A<\infty) \ =\  \sum_{y\in A} \, \Gd(x,y) \cdot \bP_y(H_A^+= \infty).
\end{equation}
We also recall two well-known formulas for the capacity of a finite set $A\subseteq \Z^d$. First 
\begin{equation}\label{decomp-1}
\cc{A}=\lim_{\norm{x}\to\infty} \frac{\bP_x(H_A<\infty)}{\Gd(x,0)},
\end{equation}
and for any $y\in A$,
\begin{equation}\label{decomp-2}
\frac{\bP_y(H^+_A=\infty)}{\cp(A)}\ =\ \lim_{\norm{x}\to\infty}\ 
\bP_x\big(S(H_A)=y \mid H_A<\infty\big).
\end{equation}
The first formula is obtained through the last passage decomposition formula~\eqref{last.passage} and the definition of the capacity \eqref{def-cap}, and the second is Theorem 2.1.3 of Lawler's book \cite{Law91}.

\begin{proof}[\bf Proof of Proposition \ref{pro:deccap}]

Consider two finite subsets $A$ and $B$ of $\Z^d$. One has 
\begin{align}
\label{decomp-3}
\bP_x(H_{A\cup B}<\infty)= \bP_x(H_A<\infty)+\bP_x(H_B<\infty)-
&\bP_x(H_A<\infty,\, H_B<\infty) \\
\nonumber = \bP_x(H_A<\infty)+\bP_x(H_B<\infty)- 
&\left( \bP_x(H_A<H_B<\infty)+ \bP_x(H_B<H_A<\infty)\right)\\ \nonumber &- \bP_x(H_A=H_B<\infty). 
\end{align}
Consider now the term $\bP_x(H_A<H_B<\infty)$. Conditioning on the possible hitting point in~$A$ and using the Markov property yield:
\begin{equation*}
\begin{split}
\bP_x(H_A<H_B<\infty)=&\sum_{y\in A\bs B}
\bP_x\big(S(H_{A\cup B})=y,\,  H_{A\cup B}<\infty\big)
\bP_y\big(H_B<\infty\big)\\
=&\bP_x\big(H_{A\cup B}<\infty\big) \sum_{y\in A\bs B}
\bP_x\big(S(H_{A\cup B})=y\mid H_{A\cup B}<\infty\big)
\bP_y\big(H_B<\infty\big).
\end{split}
\end{equation*}
Then, use \reff{decomp-1} and \reff{decomp-2} to obtain
\begin{equation*}
\lim_{x\to\infty} \frac{\bP_x(H_A<H_B<\infty)}{\Gd(0,x)}=
\sum_{y\in A\bs B}\bP_y\big(H^+_{A\cup B}=\infty\big) \bP_y\big(H_B<\infty\big).
\end{equation*}
Finally by using the last passage formula \eqref{last.passage}, we get the desired limit
\begin{equation*}
\lim_{x\to\infty} \frac{\bP_x(H_A<H_B<\infty)}{\Gd(0,x)}=
\sum_{y\in A\bs B} \sum_{z\in B} \, \bP_y\big(H^+_{A\cup B}=\infty\big)
 \Gd(y,z) \bP_z\big(H^+_B=\infty\big).
\end{equation*}
By symmetry one also has 
\begin{equation*}
\lim_{x\to\infty} \frac{\bP_x(H_B<H_A<\infty)}{\Gd(0,x)}=
\sum_{y\in B\bs A} \sum_{z\in A}\,  \bP_y\big(H^+_{A\cup B}=\infty\big)
 \Gd(y,z) \bP_z\big(H^+_B=\infty\big).
\end{equation*}
We also obtain the existence of the limit $\varepsilon(A,B)$ of 
$\bP_x(H_A=H_B<\infty)/\Gd(0,x)$, as $x\to\infty$, since 
in \reff{decomp-3} all other
ratios converge. 
To conclude just note that 
\[
\bP_x(H_A=H_B<\infty)\le \bP_x(H_{A\cap B}<\infty),
\] 
which gives
$\epsilon(A,B)\le \cc{A\cap B}$. \end{proof}

We will apply successively the decomposition of Proposition~\ref{pro:deccap}. To this end, we define for $i\ge 1$ and $j\le 2^i$, 
$$
\RR^{(i,j)}_n \ : = \ \RR[(j-1)2^{-i}n, j2^{-i}n].
$$

\begin{proposition}\label{pro:cprange}
	Fix $p\in \N$. 
	 Then we have 
	\[
	\cc{\RR_n} = \sum_{j=1}^{2^p} \cc{\RR_n^{(p,j)}} - \sum_{i=1}^{p}\sum_{j=1}^{2^{i-1}} \chi_n(i,j) - \epsilon_n,
	\]	
	where $\E{\epsilon_n^2}=\mathcal{O}((\log n)^2)$ and 
	$$\chi_n(i,j) := \chi(\RR_n^{(i,2j-1)},\RR_n^{(i,2j)}) +\chi(\RR_n^{(i,2j)},\RR_n^{(i,2j-1)}).$$ 
\end{proposition}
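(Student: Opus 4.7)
The plan is to iterate Proposition~\ref{pro:deccap} in a binary-tree fashion of depth $p$, and to control the $L^2$ norm of the accumulated remainder via a standard second-moment estimate for the intersection of two independent random walks in $\Z^4$.

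I would first establish the identity by induction on $p$. The case $p=1$ is exactly Proposition~\ref{pro:deccap} applied to $A=\RR_n^{(1,1)}$ and $B=\RR_n^{(1,2)}$. For the induction step, assuming the identity holds at stage $p$, I would apply Proposition~\ref{pro:deccap} separately to each of the $2^p$ pairs $\bigl(\RR_n^{(p+1,2j-1)},\RR_n^{(p+1,2j)}\bigr)$ whose union equals $\RR_n^{(p,j)}$. Each such application produces one additional cross-term $\chi_n(p+1,j)$ and one additional error $\epsilon(\RR_n^{(p+1,2j-1)},\RR_n^{(p+1,2j)})$, and substituting into the inductive hypothesis yields the identity at stage $p+1$, with
\[
\epsilon_n \;=\; \sum_{i=1}^{p}\sum_{j=1}^{2^{i-1}} \epsilon\bigl(\RR_n^{(i,2j-1)},\RR_n^{(i,2j)}\bigr).
\]

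For the $L^2$ bound I would combine the pointwise inequality $0\le \epsilon(A,B)\le \cc{A\cap B}\le |A\cap B|$ furnished by Proposition~\ref{pro:deccap} with the triangle inequality in $L^2$, obtaining
\[
\|\epsilon_n\|_2 \;\le\; \sum_{i=1}^{p}\sum_{j=1}^{2^{i-1}} \bigl\| \,|\RR_n^{(i,2j-1)}\cap \RR_n^{(i,2j)}|\, \bigr\|_2.
\]
By the Markov property at the common endpoint $S((2j-1)n/2^i)$, together with time-reversal of the earlier piece, each such intersection has the same law as $\RR^1_N\cap \RR^2_N$, the intersection of the ranges of two independent simple random walks of length $N:=n/2^i$ both started at the origin. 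Since the total number of pairs is $2^p-1$, a constant depending only on $p$, it suffices to prove the uniform bound $\E{|\RR^1_N\cap\RR^2_N|^2}\lesssim(\log N)^2$, which then yields $\E{\epsilon_n^2}=\mathcal{O}((\log n)^2)$ as required.

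The main (and only) obstacle is this second-moment bound, which I would establish by a classical Green's kernel computation. By independence of the two walks,
\[
\E{|\RR^1_N\cap\RR^2_N|^2} \;=\; \sum_{x,y\in \Z^4} \bP_0(x,y\in \RR_N)^2,
\]
and splitting $\bP_0(x,y\in\RR_N)$ according to which of $x,y$ is visited first yields, via the strong Markov property and \eqref{Green.bound}, that $\bP_0(x,y\in\RR_N)\lesssim G(x)G(y-x)+G(y)G(x-y)$. Restricting $x,y$ to the ball of radius $\sqrt N\log N$ (the complementary contribution being super-polynomially small by \eqref{upper.heatkernel}) and using $G(z)\asymp \|z\|^{-2}$ in $\Z^4$, the resulting double sum factorises and is bounded by $\bigl(\sum_{1\le \|z\|\le\sqrt N} \|z\|^{-4}\bigr)^2\asymp (\log N)^2$, which completes the proof.
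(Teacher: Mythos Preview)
Your proposal is correct and follows essentially the same approach as the paper: iterate Proposition~\ref{pro:deccap} dyadically, bound each remainder $\epsilon(\RR_n^{(i,2j-1)},\RR_n^{(i,2j)})$ by the size of the intersection of two independent ranges, and conclude since the number of terms is a constant depending only on $p$. The only difference is that the paper cites \cite[Lemma~3.1]{LGRosen} for the second-moment bound $\E{|\RR^1_N\cap\RR^2_N|^2}\lesssim(\log N)^2$, whereas you supply the Green's-function computation directly; your argument is fine (the cross terms in $(G(x)G(y-x)+G(y)G(x-y))^2$ are handled by $(a+b)^2\le 2(a^2+b^2)$, and the cutoff radius $\sqrt N\log N$ versus $\sqrt N$ is immaterial since $\log(\sqrt N\log N)\asymp\log N$).
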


\begin{proof}[\bf Proof]
	The proof follows directly by applying repeatedly Proposition~\ref{pro:deccap} to $\RR_n$. Moreover, from Proposition~\ref{pro:deccap} we have that in every subdivision the term $\epsilon$ is upper bounded by the size of the intersection of two independent ranges. A straightforward calculation shows that this has expectation $\log n$ (see for instance~\cite[Section~3.4]{Law91}). The bound on the second moment follows from~\cite[Lemma~3.1]{LGRosen}.
	Since we are only taking a finite sum, the result follows.
\end{proof}

\section{Strong law of large numbers}\label{sec:slln}
In this section we prove Theorem \ref{thm:lawoflargenumbers}. 
The main part of the proof consists in obtaining good bounds on 
the first and second moments of the cross term $\chi_n(1,1)$ appearing in the decomposition formula of the capacity. 
More precisely we show that
\begin{lemma}\label{lem:firstsecondchi11}
The first and second moments of $\chi_n(1,1)$ satisfy
\begin{align}
\label{chi1A}
\E{\chi_n(1,1)}\ &\lesssim\  n\cdot\frac{\log \log n}{(\log n)^2} \quad \text{ and } 
\\ \label{chi2A}
\E{\chi_n(1,1)^2}\ &\lesssim\  n^2\cdot\frac{(\log \log n)^2}{(\log n)^4}.
\end{align}
\end{lemma}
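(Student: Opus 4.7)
My plan is to bound both moments of $\chi_n(1,1)$ by first translating at $S(n/2)$ so that the two halves of the walk become independent, and then reducing to Green's function sums controlled via a last-visit decomposition, Lawler's two-walk non-intersection estimate (Theorem~\ref{thm:lawler}), and a direct $\R^4$ integral.

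\textbf{Setup.} By translating at $S(n/2)$ and time-reversing the first half, $\RR^{(1,1)}_n$ and $\RR^{(1,2)}_n$ become independent ranges $A$ and $B$ of simple random walks of length $n/2$ from the origin. Since $\chi_n(1,1) = \chi(A,B) + \chi(B,A)$, by symmetry it suffices to bound the moments of $\chi(A,B)$. From the definition of $\chi$ and the monotonicity $\bP_y(H^+_{A\cup B}=\infty)\le \bP_y(H^+_A=\infty)$, I get
\[
\chi(A,B) \,\le\, \sum_{y\in A,\, z\in B} G_d(y,z)\, \bP_y(H^+_A=\infty)\, \bP_z(H^+_B=\infty),
\]
and the independence of $A,B$ (together with the independence of the escape walks) factorises the expectation into $\sum_{y,z} G_d(y,z)\,\psi(y)\,\psi(z)$, where $\psi(y):=\E{1_{y\in A}\, \bP_y(H^+_A=\infty)}$.

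\textbf{Key single-point estimate.} The heart of \eqref{chi1A} is the bound
\[
\psi(y) \,\lesssim\, \frac{\log \log n}{\log n}\cdot G_d(y), \qquad \|y\|\le \sqrt n (\log n)^2,
\]
the complementary range being exponentially negligible by \eqref{upper.heatkernel}. To obtain it, I would expand $1_{y\in A}$ via the last-visit decomposition $1_{y\in A}=\sum_{i=0}^{n/2} 1_{\{S^1_i=y,\, S^1_j\ne y\,\forall j>i\}}$ and apply the Markov property at time $i$: conditionally on $S^1_i=y$, the reversed past of length $i$, the future of length $n/2-i$, and the fresh escape walk are three mutually independent walks from $y$. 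Bounding the escape event by non-intersection with only the future walk and applying Lawler's Theorem~\ref{thm:lawler} in dimension $4$ contributes a factor of order $1/\log(n/2-i)$; using Claim~\ref{cl:pik} to control $p_i(y)$ dyadically in $i$, the resulting sum $\sum_i p_i(y)/\log(n/2-i)$ is of order $G_d(y)\cdot \log\log n/\log n$. Inserting this bound and using $G_d(w)\asymp 1/(1+\|w\|^2)$ reduces the first moment to the deterministic estimate
\[
\sum_{\|y\|,\,\|z\|\le \sqrt n(\log n)^2} G_d(y,z)\, G_d(y)\, G_d(z) \,\asymp\, n,
\]
which is evaluated in $\R^4$ by spherical integration (the core computation being $\int_0^R r \log(R/r)\, dr \asymp R^2$). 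Combining yields $\E{\chi(A,B)} \lesssim n\log\log n/(\log n)^2$, after absorbing one of the two $\log\log n$ factors via a slightly sharper version of the $\psi$-bound in the dominant regime where $i$ is away from the boundary scales.

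\textbf{Second moment and main obstacle.} The bound \eqref{chi2A} follows by applying the same scheme to $\chi^2$, now requiring the two-point analogue
\[
\psi_2(y_1,y_2) := \E{1_{y_1,y_2\in A}\, \bP_{y_1}(H^+_A=\infty)\, \bP_{y_2}(H^+_A=\infty)}
\]
(with two independent escape walks). A joint last-visit analysis on the two variables, combined with three-walk non-intersection estimates of Lawler type (cf.\ Proposition~\ref{lem:inter}), should yield
\[
\psi_2(y_1,y_2) \,\lesssim\, \Big(\frac{\log\log n}{\log n}\Big)^2 \big[G_d(y_1)G_d(y_2-y_1) + G_d(y_2)G_d(y_1-y_2)\big],
\]
after which the resulting four-fold Green's function sum scales as $n^2$ by the same $\R^4$ integration, giving the claimed $n^2(\log\log n)^2/(\log n)^4$. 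The main technical obstacle is thus establishing the uniform bounds on $\psi$ and $\psi_2$: one needs a non-intersection estimate for three (resp.\ four) independent walks from a common point that is effective across all dyadic time scales $i\in[0,n/2]$, which is precisely the role of Theorem~\ref{thm:lawler} and its refinement Proposition~\ref{lem:inter}.
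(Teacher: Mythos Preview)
Your route is genuinely different from the paper's, and it carries two real problems.

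\textbf{Comparison.} The paper never attempts a pointwise bound on $\psi(y)$ or $\psi_2(y_1,y_2)$. Instead it uses the last-passage identity $\chi(A,B)=\sum_{y\in A}\bP_y(H^+_{A\cup B}=\infty)\,\bP_y(H_B<\infty)$ to collapse the $z$-sum into a single hitting probability. After restricting to $\|S_k\|^2\ge n_\alpha$ (cost $O(n_\alpha)$), the factor $\bP_{S_k}(H_{\widetilde\RR_n}<\infty)$ is bounded by $\log\log n/\log n$ via Lawler's Theorem~4.3.3, while $\sum_k \bP((S_k+\RR^1_+)\cap\RR_n=\emptyset)\lesssim n/\log n$ via Lawler's Theorem~3.5.1. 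For the second moment the paper proves a short preparatory lemma (two independent walks from distant points both hitting $\RR_n$ has probability $\lesssim(\log\log n/\log n)^2$) and then decorrelates the two non-intersection events by cutting $\RR_n$ at the midpoint $(k_1+k_2)/2$. No four-walk estimate is needed.

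\textbf{Gap 1: the $\psi(y)$ bound.} You write that ``bounding the escape event by non-intersection with only the future walk and applying Theorem~\ref{thm:lawler}'' gives $1/\log(n/2-i)$. This is inconsistent: Theorem~\ref{thm:lawler} is a \emph{three}-walk estimate. If you drop the past and keep only the future walk versus the escape walk, you are in a two-walk situation, and the correct bound is $1/\sqrt{\log(n/2-i)}$ (Lawler, Theorem~4.4.1), which yields only $\E{\chi}\lesssim n/\log n$ --- too weak. To get $1/\log n$ you must retain both past and future, and then you need the conditional version with the endpoint constraint $S^a(i)=-y$, which is essentially Proposition~\ref{lem:inter}; that proposition however has its own range restrictions on $i$ and $\|y\|$, so boundary terms must be handled separately. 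This is fixable, but the argument as written does not go through.

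\textbf{Gap 2: the $\psi_2$ bound.} For \eqref{chi2A} your approach requires
\[
\psi_2(y_1,y_2)\ \lesssim\ \Big(\frac{\log\log n}{\log n}\Big)^2\big[G_d(y_1)G_d(y_2-y_1)+G_d(y_2)G_d(y_1-y_2)\big],
\]
which you say ``should'' follow from a joint last-visit analysis and Proposition~\ref{lem:inter}. Nothing in the paper establishes this; it would require a decorrelation of two escape events from distinct points of the \emph{same} range, with endpoint constraints at both points. The paper sidesteps this entirely via the midpoint trick: on $\{n_\alpha\le k_1\le k_2-n_\alpha\le n-2n_\alpha\}$, splitting $\RR_n$ at $m=(k_1+k_2)/2$ makes the two non-intersection events independent, each of probability $\lesssim 1/\log n$. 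That argument is short; your $\psi_2$ route, even if correct, would be substantially longer and is currently the main obstacle you yourself identify.
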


Then, in Section~\ref{sec:slln}, we deduce the strong law of large numbers by using
our decomposition of the capacity, Proposition \ref{pro:cprange}. 
Section~\ref{sec:prelimslln} is devoted to some preliminary facts needed for the proof of Lemma~\ref{lem:firstsecondchi11}. 

\subsection{Preliminaries}\label{sec:prelimslln}
We first recall a standard fact, which directly 
follows from \eqref{Green.bound}: the mean time a walk spends
in a ball of radius $R$ is of order $R^2$. More precisely 
\begin{equation}\label{fact-1}
\sum_{k\in \N} \bP(\|S(k)\|\le R)
\ = \   \sum_{x\in \B(0,R) }\Gd(0,x)\ =\  \mathcal O(R^2).
\end{equation}
Next we present a lemma which is needed in the second moment
estimate, and which deals with intersecting the trace of a path $\RR_n$
by two independent random walks starting far apart. 
The proof follows basically from estimates of Lawler \cite{Law91}. 
Recall that $n_\alpha = n/(\log n)^\alpha$. 
\begin{lemma}\label{lem-2-inter}
Let $\alpha>0$. Consider three independent random walks $S,S^1,S^2$ starting at the
origin, and let $x,y\in \Z^d$ with $\|x\|^2,\, \|y\|^2\ge n_\alpha$. There is a constant $C>0$, such that
for $n$ large enough,
\begin{equation}\label{ineq-2-inter}
\pr{(x+\RR^1[0,\infty)) \cap \RR_n
\not=\emptyset, \ (y+\RR^2[0,\infty)) \cap \RR_n 
\not=\emptyset }\ \le\ C\cdot \left(\frac{\log\log n}{\log n}\right)^2.
\end{equation}
\end{lemma}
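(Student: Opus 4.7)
The plan is to apply a second-moment (Lawler-style) method directly to the variables
\[
M_1 := \#\{(k,j) : 0 \le j \le n,\ k \ge 0,\ S_j = x + S^1_k\}, \qquad M_2 := \#\{(k,j) : 0 \le j \le n,\ k \ge 0,\ S_j = y + S^2_k\},
\]
counting the coincidences of $\RR_n$ with the two shifted walks. Since the event in \eqref{ineq-2-inter} equals $\{M_1 \ge 1,\, M_2 \ge 1\}$, the identity $\E{M_1 M_2\, \mathbf{1}\{M_1 \ge 1,\, M_2 \ge 1\}} = \E{M_1 M_2 \mid M_1 \ge 1,\, M_2 \ge 1}\, \pr{M_1 \ge 1,\, M_2 \ge 1}$ yields the Paley-Zygmund-style bound
\[
\pr{M_1 \ge 1,\, M_2 \ge 1} \le \frac{\E{M_1 M_2}}{\E{M_1 M_2 \mid M_1 \ge 1,\, M_2 \ge 1}}.
\]

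For the numerator, conditional independence of $S^1$ and $S^2$ given $S$ gives
\[
\E{M_1 M_2} = \E{\Big(\sum_{j=0}^n G_d(x - S_j)\Big) \Big(\sum_{j=0}^n G_d(y - S_j)\Big)},
\]
and a direct computation, splitting the double sum according to the size of $\|S_j\|$ and using $G_d(w) \asymp 1/(1+\|w\|^2)$ together with \eqref{fact-1} and \eqref{upper.heatkernel}, yields $\E{M_1 M_2} \lesssim (\log\log n)^2$; each factor of $\log\log n$ arises from the effective time window $\|x\|^2 \lesssim j \le n$ (respectively with $y$), whose logarithmic length is $\asymp \log\log n$ under the hypothesis $\|x\|^2, \|y\|^2 \ge n_\alpha$. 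For the denominator, the one-walk estimate $\pr{M_i \ge 1} \lesssim \log\log n/\log n$ (from the same scheme, cf.\ \cite[Chapter~4]{Law91}) combined with $\E{M_i} \asymp \log\log n$ gives $\E{M_i \mid M_i \ge 1} \gtrsim \log n$. Applying the strong Markov property of each $S^i$ at its first coincidence time with $\RR_n$, together with the conditional independence of the two coincidence counts given $S$, upgrades this to $\E{M_1 M_2 \mid M_1 \ge 1,\, M_2 \ge 1} \gtrsim (\log n)^2$. Taking the ratio gives $\pr{M_1 \ge 1,\, M_2 \ge 1} \lesssim (\log\log n/\log n)^2$, as required.

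The main obstacle I expect is the joint lower bound on the denominator: conditioning on $\{M_1 \ge 1,\, M_2 \ge 1\}$ couples the two walks through the shared range $\RR_n$, so one must argue carefully that after $S^1$ first meets $\RR_n$ at some point $z_1$, the ``remaining'' portion of $\RR_n$ is still rich enough to produce $\gtrsim \log n$ further coincidences with $S^1$ and, independently via the conditional factorisation over $S$, with $S^2$ (whose own first hit happens at a possibly distinct point $z_2$). Managing these two conditionings simultaneously is the delicate point, and is where the argument will follow the scheme of \cite[Section~4.3]{Law91} most closely, adapted to the two-walk setting.
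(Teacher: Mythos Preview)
Your approach is different from the paper's, and the step you flag as the ``main obstacle'' is indeed a genuine gap. The paper bypasses the joint conditioning entirely: it introduces the $S$-stopping times
\[
\sigma_x = \inf\{k : S_k \in x + \RR^1[0,\infty)\}, \qquad \sigma_y = \inf\{k : S_k \in y + \RR^2[0,\infty)\},
\]
bounds the event in question by $\{\sigma_x \le \sigma_y \le n\} \cup \{\sigma_y \le \sigma_x \le n\}$, and for the first of these conditions on $(S^1, \sigma_x)$ and applies the strong Markov property of $S$ at time $\sigma_x$. After discarding the small-probability event $\{\|S(\sigma_x) - y\|^2 \le n_{\alpha+2}\}$ via \eqref{hit.ball}, what remains is two \emph{successive} applications of the one-walk intersection bound \cite[Theorem~4.3.3]{Law91}. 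No joint conditional lower bound is ever needed.

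In your scheme, the reduction to $\E{M_1 M_2 \mid M_1 \ge 1, M_2 \ge 1} \gtrsim (\log n)^2$ does not follow from the one-walk bound plus conditional independence as you suggest. Writing out the conditional independence given $S$ yields
\[
\E{M_1 M_2 \mid M_1 \ge 1, M_2 \ge 1} \ =\ \frac{\E{\,\E{M_1\mid S}\,\E{M_2\mid S}\,}}{\E{\,\pr{M_1\ge 1\mid S}\,\pr{M_2\ge 1\mid S}\,}},
\]
so what you would actually need is the \emph{pointwise} (in $S$) inequality $\E{M_i\mid S} \ge c\,(\log n)\,\pr{M_i \ge 1\mid S}$. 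By the strong Markov property of $S^i$ at its first hit of $\RR_n$, this would follow from $\sum_{j=0}^n G_d(S_j - z) \ge c \log n$ for every $z \in \RR_n$ and every realisation of $S$; but that fails whenever the range has a locally thin spot, and harmonic measure from far away tends to favour exactly such exposed points as first-hit locations. Establishing the desired inequality on average over the conditioned law of $(S, z_1, z_2)$ is not obviously easier than the lemma itself. (Your numerator bound $\E{M_1 M_2} \lesssim (\log\log n)^2$ also needs some care where $S$ comes close to $x$ or $y$, though that part is manageable with Cauchy--Schwarz and \eqref{GSK-1}.)
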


\begin{proof}[\bf Proof] The proof consists in showing that even if the two events considered in \eqref{ineq-2-inter} are not independent, 
one can still dissociate them. 
Consider the two random times
\[
\sigma_x=\inf\{k:\ S(k)\in \, \, x+\RR^1[0,\infty)\},\quad
\text{and}\quad
\sigma_y=\inf\{k:\ S(k)\in \,\, y+\RR^2[0,\infty)\}.
\]
Note that $\sigma_x$ (resp. $\sigma_y$)
is independent of $S^2$ (resp. of $S^1$).
We can express the non-intersection event in terms of $\sigma_x$
and $\sigma_y$:
\[
\pr{(x+\RR^1[0,\infty))\cap \RR_n
\not=\emptyset, \, ((y+\RR^2[0,\infty))\cap \RR_n
\not=\emptyset}\ \le\ 
\pr{\sigma_x\le \sigma_y\le n}+
\pr{\sigma_y\le \sigma_x\le n}.
\]
By symmetry, it is enough to deal with the first probability on the right-hand side. 
Conditioning on $S^1$ and $\sigma_x$, we get
\[
\begin{split}
\pr{\sigma_x\le \sigma_y\le n}\ \le& \ \E{\1(\sigma_x\le n)\
\prstart{\RR[0,n-\sigma_x]\cap (y+\RR^2[0,\infty))
\not= \emptyset}{S(\sigma_x)}}\\
\ \le &\ \E{\1(\sigma_x\le n)\ \prstart{\RR[0,n]\cap (y+\RR^2[0,\infty)) \not= \emptyset}{S(\sigma_x)}}.
\end{split}
\]
Note that since $\|y\|^2\ge n_{\alpha}$, 
we have using \reff{hit.ball}, 
\begin{equation*}
\pr{\sigma_x\leq n, \|S(\sigma_x)-y\|^2\, \le\,  n_{\alpha +2}}\ \leq \pr{H_{\B(y,\sqrt{n_{\alpha+2}})}<\infty} \lesssim \ \frac{1}{(\log n)^2}.
\end{equation*}
Now, on the event 
$\{\sigma_x\leq n, \|S(\sigma_x)-y\|^2\ge n_{\alpha +2}\}$, \cite[Theorem 4.3.3]{Law91} shows that 
\begin{equation*}\label{simple-10}
\prstart{(y+\RR^2[0,\infty))\cap
\RR[0,n] \not=\emptyset}{S(\sigma_x)}\ \lesssim \ \frac{\log\log n}{\log n}. 
\end{equation*}
Thus by another application of \cite[Theorem 4.3.3]{Law91} we get 
\begin{equation*}
\pr{\sigma_x\le \sigma_y\le n}\  \lesssim \ \bP(\sigma_x\le n) \cdot \frac{\log\log n}{\log n} 
\ \lesssim \ \left(\frac{\log\log n}{\log n}\right)^2,
\end{equation*}
and this completes the proof.
\end{proof}

\subsection{First and second Moment estimates of the cross terms}\label{sec:firstslln}

\begin{proof}[\bf Proof of Lemma~\ref{lem:firstsecondchi11}]

Note first that by reversibility of the walk, $\chi_n(1,1)$ is equal in law
to $\chi(\RR_{n/2},\widetilde \RR_{n/2}) + \chi(\til{\RR}_{n/2}, \RR_{n/2})$, with $\RR$ and $\widetilde \RR$ 
the ranges of two independent walks $S$ and $\widetilde S$. By symmetry (and Cauchy-Schwarz) it is enough to bound the first and second moments of $\chi(\RR_{n/2},\widetilde \RR_{n/2})$. 
However, to avoid annoying factors $1/2$ everywhere, we will bound the term $\chi(\RR_{n},\widetilde \RR_{n})$ instead, which is of course entirely equivalent. 
Recall that for any finite sets $A$ and~$B$, we have by definition of $\chi(A,B)$ and using also the last exit formula \eqref{last.passage}: 
$$\chi(A,B)= \sum_{y\in A} \bP_y(H_{A\cup B}^+= \infty)\cdot \bP_y(H_B<\infty).$$
Even though the first moment bound
 \eqref{chi1A} follows from \eqref{chi2A} by using Jensen's inequality, 
it is interesting to include a direct proof of \eqref{chi1A}.
Indeed, it serves as a warmup for the proof of the second moment estimate.

So let us prove \eqref{chi1A}. For this we need to consider two additional independent random walks $S^1$ and~$\widetilde S^1$ starting from the origin and also independent of $S$ and $\widetilde S$. 
Denote their ranges by $\RR^1$ and~$\widetilde \RR^1$ respectively. 
We start with a handy bound: 
\begin{equation*}
\begin{split}
\chi(\RR_n,\widetilde \RR_n)=&\ \sum_{k=1}^n \1(S_k\notin \RR_{k-1}) 
\bP((S_k+\RR^1[1,\infty))\cap (\RR_n\cup \widetilde \RR_n)=
\emptyset,\, (S_k+\widetilde \RR^1[0,\infty))
\cap\widetilde\RR_n\not=\emptyset \mid  S,\, \widetilde S)\\
\le &\  \sum_{k=1}^n \bP\big((S_k+\RR^1[1,\infty))
\cap\RR_n=\emptyset \mid  S\big)\cdot 
\bP\big((S_k+ \widetilde \RR^1[0,\infty))\cap\widetilde \RR_n\not=\emptyset \mid  S,\, \widetilde S\big).
\end{split}
\end{equation*}
Taking expectation on both sides and choosing any $\alpha>2$, 
we get using \eqref{fact-1} at the first line and Lawler's results \cite[Theorem 3.5.1 \text{and }4.3.3]{Law91} at the second and fourth lines:  
\begin{equation*}
\begin{split}
\mathbb E[\chi(\RR_n,\widetilde \RR_n)]\le & \sum_{k=1}^{n}\sum_{\|x\|^2\ge n_\alpha} \bP\big((x+\RR^1[1,\infty))
\cap\RR_n=\emptyset, \, S_k= x\big)
\bP\big((x+ \widetilde \RR^1[0,\infty))\cap\widetilde \RR_n\not=\emptyset\big) + \mathcal O(n_\alpha)\\
\lesssim & \  \sum_{k=1}^{n}  \ \bP\big((S_k+\RR^1[1,\infty))
\cap\RR_n=\emptyset \big)\cdot \frac{\log \log n}{\log n} + \mathcal O(n_\alpha)\\
\lesssim & \sum_{k=n_\alpha}^{n-n_\alpha}  \bP\big((S_k+\RR^1[1,\infty))
\cap\RR_n=\emptyset \big)\cdot \frac{\log \log n}{\log n} + \mathcal O(n_\alpha)\\
\lesssim &\ n\cdot \frac{\log \log n}{(\log n)^2}.
\end{split}
\end{equation*}
This proves \eqref{chi1A}.

When taking the square, we need more notation. So 
let $\{S,\widetilde S,S^1,S^2, \widetilde S^1,\widetilde S^2\}$
be independent random walks all starting at the origin.
Fix $\alpha>4$, and introduce the following event 
\[
\AA_n(k_1,k_2)=\{ \|S(k_1)\|^2\ge n_\alpha\quad \text{and}\quad \|S(k_2)\|^2\ge n_\alpha\}.
\]
To simplify notation, write also $\RR^i$ for $\RR^i[0,\infty)$ and $\RR^i_+$ for $\RR^i[1,\infty)$. 
Now, using \eqref{fact-1} at the second line and Lemma \ref{lem-2-inter} at the fourth line, we arrive at 
\begin{align*}
\E{\chi(\RR_n,\widetilde \RR_n)^2}\le & 
\ \sum_{k_1=1}^n \sum_{k_2=1}^n\, 
\bP\big((S_{k_i}+\RR^i_+)\cap \RR_n=\emptyset,
\, (S_{k_i}+\widetilde \RR^i)\cap \widetilde \RR_n\not=\emptyset,\, \forall i=1,2\big)\\
\le \sum_{k_1, k_2}& \bP\big(\AA_n(k_1,k_2), 
 (S_{k_i}+\RR^i_+)\cap \RR_n=\emptyset,
 (S_{k_i}+\widetilde \RR^i)\cap \widetilde \RR_n\not=\emptyset,  \forall i=1,2\big)+ \mathcal O(n\cdot n_\alpha)\\
\le &\ \sum_{k_1,k_2} \sum_{x_1,x_2}\ 
\bP\big(\AA_n(k_1,k_2), \, S_{k_i}=x_i, \, (x_i+\RR^i_+)\cap \RR_n=\emptyset, \, \forall i=1,2\big)\\
&\qquad \times   \bP\big((x_i+\widetilde \RR^i)\cap \widetilde \RR_n 
\not=\emptyset, \, \forall i=1,2 \big)+\mathcal O(n\cdot n_\alpha)\\
\lesssim &\  \sum_{k_1,k_2}
\bP\big(\AA_n(k_1,k_2),\,  (S_{k_i}+\RR^i_+)\cap \RR_n=\emptyset,\, \forall i=1,2\big)\cdot \left(\frac{\log \log n}{\log n}\right)^2+\mathcal O(n\cdot n_\alpha)\\
\lesssim   &\sum_{n_\alpha \le k_1\le k_2-n_\alpha\le n- 2n_\alpha} 
 \bP\big((S_{k_i}+\RR^i_+)\cap \RR_n=\emptyset,\, \forall i=1,2\big)\cdot \left(\frac{\log \log n}{\log n}\right)^2+\mathcal O(n\cdot n_\alpha).
\end{align*}
We deal now with the non-intersection terms for which we removed the space constraints but we added time constraints. 
From the walk $S$,
we form two walks stemming from position $S_m$ with
$m=[(k_1+k_2)/2]$. One walk goes backward, and is denoted $S^3$, and
another one goes forward and is denoted $S^4$. Translating the origin to
$S_m$ we obtain using \cite[Theorem 3.5.1]{Law91},
\begin{equation*}
\begin{split}
\bP\big((S_{k_i}+\RR^i_+)\cap \RR_n=\emptyset, \, \forall i=1,2\big)
\le &\bP\big( (S^3_{m/2}+\RR^1_+)\cap \RR^3[0,m]=\emptyset\big)\\ &\times \bP\big((S^4_{m/2}+\RR^2_+)\cap \RR^4[0,n-m]=\emptyset\big)
= \ \mathcal O\left(\frac{1}{(\log n)^2}\right),
\end{split}
\end{equation*}
which proves \eqref{chi2A}. 
\end{proof}
 
\subsection{Strong law of large numbers}\label{sec:slln}
We are now ready for the proof of Theorem \ref{thm:lawoflargenumbers}. 
The first step is to 
use the dyadic decomposition of the capacity to produce
self-similar {\it independent} terms at a smaller scale. 
If this scale is well chosen, 
then the result of the previous section shows that the 
cross terms $\chi_n(i,j)$ are negligible, by a simple application of Chebyshev's inequality. 
On the other hand, using Borel-Cantelli and Chebyshev's inequality again, one can show that the self-similar part 
converges almost surely, at least along some subsequence growing sub-exponentially fast. 
Finally, using the monotonicity of the capacity we can deduce the convergence along the full sequence.

\begin{proof}[\bf Proof of Theorem~\ref{thm:lawoflargenumbers}]
Choose $L$ as a function of $n$, such that $(\log n)^4\le 2^L\le 2 (\log n)^4$. In particular one has $L\asymp \log \log n$. 
Proposition \ref{pro:cprange} shows that 
\begin{equation}\label{lln-1}
\cc{\RR_n}=\sum_{j=1}^{2^L} \cc{\RR^{(L,j)}_n} 
-\xi_n-\epsilon_n,
\end{equation}
where $\mathbb E[\epsilon_n]=\mathcal O(2^L\log n) = \mathcal O((\log n)^5)$, and 
$$\xi_n=\sum_{i=1}^L\sum_{j=1}^{2^{i-1}} \chi_n(i,j).$$
Now \eqref{chi2A} shows that 
for all $i\le L$ and $j\le 2^{i-1}$
\begin{equation*}
\mathbb E[\chi_n(i,j)^2]\ \lesssim \ n^2\cdot \frac{(\log \log n)^2}
{2^{2i}(\log n)^4}.
\end{equation*}
Using Cauchy-Schwarz and the independence of the $(\chi_n(i,j))_j$ 
for any fixed $i$, one obtains
\begin{equation*}
\vr{\xi_n} \le  L\cdot \sum_{i=1}^{L}
\vr{\sum_{j=1}^{2^{i-1}}\chi_n(i,j)} \ =\ L \cdot \sum_{i=1}^{L} 2^{i-1} \vr{\chi_n(i,1)} \ = \  \mathcal O\left(n^2\cdot \frac{(\log \log n)^3}{(\log n)^4}\right).
\end{equation*}
This together with Chebyshev's inequality give 
\begin{equation}\label{lln-4}
\pr{| \xi_n-\E{\xi_n}|> \epsilon \frac{n}{\log n}}\ \lesssim\  \frac{(\log \log n)^3}{(\log n)^2},
\end{equation}
where $\epsilon>0$.
On the other hand, using the trivial bound $\cp(\RR_n)\le |\RR_n|\le n$,  
and Chebyshev's inequality again for a sum of independent terms,
we get
\begin{equation}\label{lln-5}
\bP\left(\sum_{j=1}^{2^L}  \left|\cc{\RR^{(L,j)}_n}-\E{\cc{\RR_n^{(L,j)}}}\right|>
\epsilon \frac{n}{\log n}\right)\ \lesssim \ \frac{1}{(\log n)^2}.  
\end{equation}
Now consider the subsequence $a_n=\exp(n^{3/4})$, and observe that 
it satisfies  
\[
\lim_{n\to\infty} \frac{a_{n+1}}{a_n}=1,\quad\text{and}\quad
\sum_{n} \frac{(\log\log a_n)^3}{(\log a_n)^2}<\infty.
\]
Therefore \eqref{chi1A}, \eqref{lln-4} and Borel-Cantelli show that almost surely 
$$\lim_{n\to \infty}\ \frac{\log a_n}{a_n}\cdot  \xi_{a_n} = 0.$$
Similarly using the bound $\E{\epsilon_n}=\mathcal O((\log n)^5)$, we deduce using Markov's inequality that almost surely 
$$\lim_{n\to \infty}\ \frac{\log a_n}{a_n}\cdot  \epsilon_{a_n} = 0.$$
Finally, using \eqref{LLN1moment}, \eqref{lln-1}, \eqref{lln-5} and Borel-Cantelli again, we deduce that almost surely 
$$\lim_{n\to \infty}\ \frac{\log a_n}{a_n}\cdot  \cp(\RR_{a_n}) = \frac{\pi^2}{8}.$$
To conclude, first observe that $\cp(\RR_n)$ is nondecreasing, since for any $A\subset B$, one has 
$\cp(A)\le \cp(B)$. Thus if for $n\ge 1$, we define $k_n$ as the unique integer, such that 
$a_{k_n} \le n < a_{k_n+1}$, we have 
\begin{equation*}
\cc{\RR[0,a_{k_n}]}\ \le\ \cc{\RR[0,n]}\ \le\ \cc{\RR[0,a_{k_n+1}]}.
\end{equation*}
Since $a_{k_n+1}/a_{k_n}$ goes to $1$, as $n\to \infty$, the sequence 
$(\log n/n) \cdot \cc{\RR[0,n]}$ converges.  
\end{proof}

\begin{remark}
{\em Note that for the last part of 
our proof we followed the strategy as Dvoretzky and Erd\"os
in their pioneering work on the range \cite{DE}. They also first proved an almost sure limiting result along a subsequence growing subexponentially fast (using also Chebyshev's inequality and good bounds on the variance), and then 
deduced the strong law of large numbers using a monotonicity argument. 
The idea that a decomposition like \eqref{incl.excl} 
could be useful in obtaining sharp variance bounds and a 
central limit theorem came much later, in Le Gall's papers 
\cite{LG86, LG88}.  }
\end{remark}

\section{Existence and definition of the limiting term}\label{sec:def}
The goal of this section is to give a precise definition of the limiting term appearing in Theorem~\ref{thm:clt}.  
We also prove a Carleman's condition for the sum approximating it.  

\subsection{Carleman's condition}
We recall that Carleman's condition holds for a nonnegative random variable $X$, if its sequence of moments $m_p:=\mathbb E[X^p]$ satisfies: 
$$\sum_{p\ge 1}\ (m_p)^{- \frac 1{2p}} \ = \ \infty.$$
\begin{proposition}\label{pro:carleman}
Let 
$$X = \int_0^1\int_0^1 \frac{1}{\|\beta_s-\widetilde \beta_t\|^2}\, ds\, dt,$$
where $\beta$ and $\widetilde \beta$ are two independent standard $4$-dimensional Brownian motions.  
There exist positive constants $c$ and $C$, such that 
$$c^p \, p^p \ \le \ \mathbb E\left[X^p \right]\ \le\ C^p\cdot  p^{2p},$$
for all $p\ge 1$. In particular the upper bound implies that Carleman's condition holds for $X$. 
\end{proposition}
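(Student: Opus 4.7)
My plan is to work from the symmetrized representation
\[
\mathbb{E}[X^p] \;=\; (p!)^2\int_{\Delta_p\times\Delta_p} \mathbb{E}\!\left[\prod_{i=1}^p \frac{1}{\|X_i\|^2}\right] ds\, dt, \qquad \Delta_p=\{0<s_1<\dots<s_p<1\},
\]
where $X_i:=\beta_{s_i}-\widetilde\beta_{t_i}$, and to exploit the Markov decomposition $X_i=X_{i-1}+W_i$ with the $W_i$ independent and $W_i\sim\mathcal N(0,\sigma_i^2 I_4)$, $\sigma_i^2:=(s_i-s_{i-1})+(t_i-t_{i-1})$. The central tool, available precisely because $d=4$ is critical for integrating $\|y\|^{-2}$ against a Gaussian, is the explicit identity
\[
\mathbb{E}\!\left[\frac{1}{\|z+W\|^2}\right] \;=\; \frac{1-e^{-\|z\|^2/(2\sigma^2)}}{\|z\|^2}, \qquad W\sim\mathcal N(0,\sigma^2 I_4),
\]
obtained via $\|y\|^{-2}=\tfrac12\int_0^\infty e^{-t\|y\|^2/2}\,dt$ followed by a Gaussian integration.

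For the upper bound, since $(1-e^{-r})/r\le 1$ for $r\ge 0$, the identity yields $\mathbb{E}[\|X_i\|^{-2}\mid\mathcal F_{i-1}]\le (2\sigma_i^2)^{-1}$, and iterated conditioning gives $\mathbb{E}[\prod_i\|X_i\|^{-2}]\le \prod_i(2\sigma_i^2)^{-1}$. Changing variables to $u_i=s_i-s_{i-1}$, $v_i=t_i-t_{i-1}$, applying AM--GM ($u_i+v_i\ge 2\sqrt{u_iv_i}$), and evaluating the resulting separable integral via Dirichlet's formula $\int_{u\ge 0,\,\sum u_i\le 1}\prod u_i^{-1/2}\,du = \pi^{p/2}/\Gamma(p/2+1)$, I obtain $\mathbb{E}[X^p]\le (p!)^2\pi^p/(4^p\Gamma(p/2+1)^2)$, which Stirling bounds by $C^p p^p$ and a fortiori by $C^p p^{2p}$.

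For the lower bound, I would restrict integration to the diagonal region $R_p=\{u_i,v_i\in[1/(2p),1/p]\}\subset\Delta_p\times\Delta_p$, of volume $\asymp p^{-2p}$, on which $\sigma_i^2\asymp 1/p$ uniformly. With $a:=1/(2\sqrt p)$ and $E:=\{\|X_i\|\le a\text{ for all }i\}$, a direct Gaussian density estimate gives, uniformly for $x\in\mathbb R^4$ with $\|x\|\le a$,
\[
\mathbb{P}(\|x+W_i\|\le a)\;\ge\;(2\pi\sigma_i^2)^{-2}\,e^{-2a^2/\sigma_i^2}\,|\mathcal B(0,a)|\;\ge\;c_*>0,
\]
uniformly in $p$ and $i$: the density prefactor $\asymp p^2$ cancels the ball volume $\asymp p^{-2}$, and $a^2/\sigma_i^2$ stays bounded. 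Iterating the Markov property gives $\mathbb{P}(E)\ge c_*^p$, while on $E$ one has $\prod\|X_i\|^{-2}\ge(4p)^p$; combining, $\mathbb{E}[X^p]\ge (p!)^2\cdot\mathrm{vol}(R_p)\cdot(4c_*p)^p\asymp (p!)^2 c_*^p p^{-p}\ge c^p p^p$ after Stirling. Carleman's condition is then immediate from the upper bound, since $m_p^{-1/(2p)}\ge C^{-1/2}p^{-1}$ has divergent sum. The delicate step is matching scales in the lower bound: the choice $a\asymp\sigma_i\asymp 1/\sqrt p$ is forced by requiring both that $E$ remains likely uniformly in $p$ and that the pointwise lower bound on the integrand reaches $p^p$; any mismatch between these two scales breaks the target growth.
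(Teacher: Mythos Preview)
Your symmetrization identity
\[
\mathbb{E}[X^p]\;=\;(p!)^2\int_{\Delta_p\times\Delta_p}\mathbb{E}\!\left[\prod_{i=1}^p\|\beta_{s_i}-\widetilde\beta_{t_i}\|^{-2}\right]ds\,dt
\]
is not correct. Writing out $X^p$ over $[0,1]^{2p}$ and ordering both the $s$-variables and the $t$-variables yields
\[
\mathbb{E}[X^p]\;=\;p!\sum_{\sigma\in\mathfrak S_p}\int_{\Delta_p\times\Delta_p}\mathbb{E}\!\left[\prod_{i=1}^p\|\beta_{s_i}-\widetilde\beta_{t_{\sigma(i)}}\|^{-2}\right]ds\,dt,
\]
and the integrands for different permutations $\sigma$ are genuinely different (already for $p=2$ the covariance structure of $(\beta_{s_1}-\widetilde\beta_{t_{\sigma(1)}},\beta_{s_2}-\widetilde\beta_{t_{\sigma(2)}})$ depends on $\sigma$). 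Your Markov decomposition $X_i=X_{i-1}+W_i$ with $W_i$ independent and $\mathrm{Var}(W_i)=(s_i-s_{i-1})+(t_i-t_{i-1})$ is valid \emph{only} for the identity pairing; for a general $\sigma$ the $\widetilde\beta$-increment $\widetilde\beta_{t_{\sigma(i)}}-\widetilde\beta_{t_{\sigma(i-1)}}$ need not be independent of the past. If you try instead to condition on all of $\widetilde\beta$ and use only the $\beta$-Markov structure, the bound becomes $\prod_i(2(s_i-s_{i-1}))^{-1}$, whose integral over $\Delta_p$ diverges. So as written the upper bound has a real gap; note also that your conclusion $\mathbb{E}[X^p]\le C^p p^p$ is strictly stronger than the $C^p p^{2p}$ claimed in the proposition. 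The paper avoids this by applying Jensen in the $t$-variable to reduce to a single $\widetilde\beta_t$, then only the $s$-Markov structure is needed; the price is the extra factor $p^p$ and two auxiliary lemmas tracking logarithmic corrections.

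For the lower bound your idea is essentially the paper's, but again you lean on the identity pairing: with only $\mathbb{E}[X^p]\ge p!\cdot I_{\mathrm{id}}$ (the correct inequality from positivity) rather than $(p!)^2\cdot I_{\mathrm{id}}$, your computation yields $c^p$ instead of $c^p p^p$. The easy fix, which the paper uses, is to replace your event $E=\{\|X_i\|\le a\}$ by the stronger event $\{\beta_{i/p},\widetilde\beta_{i/p}\in\mathcal B(0,1/\sqrt p)\text{ for all }i\}$; this event constrains $\beta$ and $\widetilde\beta$ separately, so its probability is still $\ge c_*^p$ by two independent applications of the Markov property, and on it $\|\beta_{s_i}-\widetilde\beta_{t_{\sigma(i)}}\|\le 2/\sqrt p$ \emph{for every permutation} $\sigma$. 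This gives the uniform bound $I_\sigma\ge (c_0/p)^p$ needed for $\mathbb{E}[X^p]\ge (p!)^2\min_\sigma I_\sigma\ge c^p p^p$.
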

\begin{remark}
\emph{Note that by reversibility of the Brownian path, and scaling invariance, one has 
$$X \ \stackrel{(d)}{=} \ 2\, \int_0^{1/2} \int_{1/2}^1\ \frac{1}{\|\beta_s- \beta_t\|^2}\, ds\, dt.$$
}
\end{remark}

\begin{proof}[\bf Proof]
We first prove the upper bound. By using Jensen's inequality we get  
$$\mathbb E[X^p] \lesssim \int_0^1\, dt \, \mathbb E \left[ \left( \int_0^1 \, \frac{ds}{\|\beta_s - \widetilde \beta_t\|^2}\right)^p\right]. $$
A symmetry argument gives 
\begin{equation}
\label{moment1}
\mathbb E[X^p] \ \lesssim\ p! \, \int_0^1\, dt \,  \int_{0\le s_1\le \ldots \le s_p\le 1} \, \mathbb E \left[ 
\frac {1}{\|\beta_{s_1}- \widetilde \beta_t\|^2\cdots \|\beta_{s_p}- \widetilde \beta_t\|^2 }\right] \, ds_1\ldots ds_p.
\end{equation}
By using standard properties of the Brownian motion, we can write 
\begin{eqnarray}
\label{moment2}
&&\int_{0\le s_1\le \ldots\le s_p\le 1} \mathbb E \left[ 
\frac {1}{\|\beta_{s_1}- \widetilde \beta_t\|^2\cdots \|\beta_{s_p}- \widetilde \beta_t\|^2 }\right] \, ds_1\dots ds_p\\
\nonumber &=&\int_{0\le s_1\le \ldots\le s_p\le 1}   \mathbb E \left[ 
\frac {1}{\|\beta_{s_1}- \widetilde \beta_t\|^2\cdots \|\beta_{s_{p-1}}- \widetilde \beta_t\|^2 } \, \mathbb E \left(  \frac{1}{\| \beta_{s_p}- \widetilde \beta_t\|^2}\ \Big|\  (\beta_u)_{u\le s_{p-1}}, \widetilde \beta_t \right)   \right]\, ds_1\ldots  ds_p  \\
\nonumber &=&\int_{0\le s_1\le \ldots\le s_{p-1}\le 1} ds_1\ldots ds_{p-1}  \mathbb E \left[ 
\frac {1}{\|\beta_{s_1}- \widetilde \beta_t\|^2\cdots \|\beta_{s_{p-1}}- \widetilde \beta_t\|^2 } \, \int_0^{1- s_{p-1}} \mathbb E \left(  \frac{1}{\| \beta_s- x_p\|^2}\ \Big|\  x_p \right) \, ds  \right],  
\end{eqnarray}
with $x_p = \beta_{s_{p-1}} - \widetilde \beta_t$. 
Now we need the following two lemmas. 
\begin{lemma} 
\label{momentLem1} One has 
\begin{equation*}
\int_0^1 \mathbb E\left[\frac {(a+|\log \|\beta_s - x\||)^k}{\|\beta_s-x\|^2}\right]\, ds \ \lesssim \ \sum_{\ell=0}^{k+1}(4k)^\ell \cdot \left( a+1+ |\log \|x\||\right)^{k+1-\ell}, 
\end{equation*}
uniformly in $x\in \mathbb R^4\setminus \{0\}$, $a\ge 0$ and $k\ge 0$. 
\end{lemma}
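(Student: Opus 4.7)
The plan is to convert the Brownian expectation into a deterministic integral on $\R^4$ by evaluating the time-integrated 4D heat kernel in closed form, and then to bound that integral by splitting the radial variable into three regimes. The exponent $k+1$ in the claim is one more than what the pointwise integrand suggests, and that extra logarithmic factor emerges from integrating $(a+|\log r|)^k/r$ over an intermediate annulus.

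First I would establish the explicit identity
\[
\int_0^1 p_s(y)\, ds \; =\; \frac{e^{-\|y\|^2/2}}{2\pi^2\,\|y\|^2}, \qquad y\in\R^4\setminus\{0\},
\]
obtained from the 4D heat kernel $p_s(y)=(2\pi s)^{-2}e^{-\|y\|^2/(2s)}$ by the substitution $u=\|y\|^2/(2s)$. Fubini and the change of variable $u=y-x$ then rewrite the left-hand side of the claim as
\[
\frac{1}{2\pi^2}\int_{\R^4}\frac{(a+|\log\|u\||)^k}{\|u\|^2}\cdot\frac{e^{-\|u+x\|^2/2}}{\|u+x\|^2}\, du.
\]

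Next I would pass to spherical coordinates $u=r\omega$ with $\omega\in S^3$, and exploit the harmonicity of $y\mapsto 1/\|y\|^2$ on $\R^4\setminus\{0\}$ to evaluate the angular integral
\[
\int_{S^3}\frac{d\omega}{\|r\omega+x\|^2} \; =\; \frac{2\pi^2}{\max(r,\|x\|)^2}.
\]
Bounding the Gaussian factor by $1$ on $[0,2\|x\|]$ and by $e^{-r^2/8}$ on $[2\|x\|,\infty)$ (since there $\|r\omega+x\|\ge r/2$), the problem reduces to controlling the single radial integrand $r(a+|\log r|)^k/\max(r,\|x\|)^2$ in three regimes: $r\le\|x\|$, an intermediate annulus, and $r\ge 2\|x\|\vee 1$ where the Gaussian absorbs everything.

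The decisive regime is the intermediate annulus (which is nonempty in the critical case $\|x\|\le 1$): substituting $t=\log(1/r)$ gives the exact evaluation
\[
\int_{\|x\|}^{1}\frac{(a+\log(1/r))^k}{r}\, dr \; =\; \int_0^{|\log\|x\||}(a+t)^k\, dt \; =\; \frac{(a+|\log\|x\||)^{k+1}-a^{k+1}}{k+1},
\]
which produces the dominant $(a+1+|\log\|x\||)^{k+1}$ term of the claim. The small-$r$ regime contributes $\|x\|^{-2}\int_0^{\|x\|} r(a+|\log r|)^k\, dr$; after iterated integration by parts (still using $t=\log(1/r)$, so that $r\,dr=\tfrac{1}{2}e^{-2t}\,dt$) this is $\lesssim \sum_{\ell=0}^{k}\frac{k!}{(k-\ell)!\,2^\ell}(a+1+|\log\|x\||)^{k-\ell}$, feeding the terms with $\ell\ge 1$. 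The large-$r$ regime is bounded by a constant. The main effort—more bookkeeping than insight—is to combine a binomial expansion of $(a+|\log r|)^k$ around $|\log\|x\||$ with these pieces, verifying that the total coefficient of $(a+1+|\log\|x\||)^{k+1-\ell}$ is dominated by $(4k)^\ell$ uniformly in $k$; the factor $4$ is loose and comes from cheap bounds like $\binom{k+1}{\ell}\le (k+1)^\ell$ combined with constants from the radial integration.
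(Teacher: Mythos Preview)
Your approach is correct and closely parallels the paper's, with one genuine (and pleasant) difference in the reduction step. Both you and the paper first integrate the 4D heat kernel in time to obtain the identity
\[
\int_0^1 \mathbb E\left[\frac {(a+|\log \|\beta_s - x\||)^k}{\|\beta_s-x\|^2}\right] ds \ \lesssim\ \int_{\R^4}\frac{(a+|\log\|u\||)^k}{\|u\|^2\,\|u+x\|^2}\,e^{-\|u+x\|^2/2}\, du,
\]
and both ultimately reduce to the same three radial integrals (the paper's $I_1,I_2,I_3$), which are then controlled by iterated integration by parts exactly as you describe. The difference is how the reduction to radial integrals is carried out: the paper splits $\R^4$ into three geometric regions according to the relative sizes of $\|u\|$, $\|u+x\|$ and $\|x\|$, and bounds the integrand pointwise on each; you instead pass to spherical coordinates and invoke the harmonicity of $y\mapsto\|y\|^{-2}$ in $\R^4$ (Newton's shell theorem) to compute the angular integral $\int_{S^3}\|r\omega+x\|^{-2}\,d\omega=2\pi^2/\max(r,\|x\|)^2$ exactly. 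Your route is slightly cleaner and explains transparently why the radial integrand takes the form $r(a+|\log r|)^k/\max(r,\|x\|)^2$; the paper's route avoids the appeal to harmonicity but requires a bit more bookkeeping with the regions.

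One small correction: your assertion that ``the large-$r$ regime is bounded by a constant'' is not literally true, since $\int_1^\infty (a+\log r)^k r^{-1}e^{-r^2/8}\,dr$ depends on $a$ and $k$. What is true (and what the paper proves in its Claim~\ref{claim.moment2} by the same integration-by-parts recursion you use for the small-$r$ regime) is that this integral is bounded by $\sum_{\ell=0}^k (4k)^\ell a^{k-\ell}$, which is itself dominated by the right-hand side of the lemma. This does not affect your argument, only the wording.
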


\begin{lemma} \label{momentLem2} For all $k\ge 1$ and $a\ge k$, 
$$ \int_0^1 \mathbb E\left[ (a+|\log \| \widetilde \beta_t \||)^k  \right] \, dt \ \lesssim \  k\cdot a^k.$$
\end{lemma}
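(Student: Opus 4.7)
\textbf{Proof plan for Lemma~\ref{momentLem2}.} The plan is to reduce everything to an explicit computation via Brownian scaling and the multinomial theorem. The scaling invariance $\widetilde\beta_t \stackrel{(d)}{=} \sqrt{t}\,\widetilde\beta_1$ gives $\log\|\widetilde\beta_t\| \stackrel{(d)}{=} \tfrac{1}{2}\log t + Y$ with $Y := \log\|\widetilde\beta_1\|$, so that $|\log\|\widetilde\beta_t\|| \leq |Y| + \tfrac{1}{2}|\log t|$ for $t \in (0,1]$. It therefore suffices to estimate $\int_0^1 \E{(a + |Y| + \tfrac{1}{2}|\log t|)^k}\,dt$.

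I would then expand by the multinomial theorem, integrate in $t$ using $\int_0^1 |\log t|^\ell \, dt = \ell!$, and take expectations to arrive at
$$k!\sum_{i+j+\ell = k} \frac{a^i\,\E{|Y|^j}}{i!\,j!\,2^\ell}.$$
The crucial ingredient is then a sharp bound on $\E{|Y|^j}$. Using the density $p_R(r) = \tfrac{1}{2} r^3 e^{-r^2/2}$ of $R := \|\widetilde\beta_1\|$ and splitting at $R=1$, the small-$R$ contribution, after substituting $u = -\log r$, equals $\tfrac{1}{2}\int_0^\infty u^j e^{-4u}\,du = j!/(2\cdot 4^{j+1})$, while the large-$R$ contribution is controlled via $\log r \leq r$ by a Gaussian moment of order $\Gamma(j/2+2)$, which is of much smaller order. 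This yields $\E{|Y|^j} \leq C\cdot j!/4^j$ for an absolute constant $C$.

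Substituting and first summing over $j+\ell = k-i$ with fixed $i$,
$$\sum_{j+\ell = k-i}\frac{1}{4^j\,2^\ell}\ =\ \frac{1}{2^{k-i}}\sum_{j=0}^{k-i}\Big(\tfrac{1}{2}\Big)^{\!j}\ \leq\ \frac{2}{2^{k-i}},$$
so the full expression is bounded by $2C\cdot(k!/2^k)\sum_{i=0}^k (2a)^i/i!$. Since $a\geq k$, the ratio of consecutive terms in this last series is $2a/(i+1)\geq 1$ for $i\leq k$, so the sum is dominated by its last term: $\sum_{i=0}^k (2a)^i/i! \leq (k+1)(2a)^k/k!$. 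Combining, the estimate becomes $\lesssim k\cdot a^k$, as claimed.

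The main obstacle is producing a moment bound on $|Y|$ with geometric decay strictly better than $1/2^j$: without this, the inner sum $\sum_{j+\ell = m} c^j/2^\ell$ does not collapse to $O(1/2^m)$ and the final estimate degrades to $k^2\cdot a^k$ or worse. The factor $1/4^j$ available here reflects precisely the cubic density of $\|\widetilde\beta_1\|$ near the origin, i.e., the ambient dimension being $4$.
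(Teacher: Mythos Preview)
Your proof is correct and takes a genuinely different route from the paper's.

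The paper writes out the Gaussian density explicitly, integrates in $t$ first to reduce to a one-dimensional radial integral $\int_0^\infty (a+|\log r|)^k\, r\, e^{-r^2/2}\, dr$, splits at $r=1$, and then applies the integration-by-parts recursions already proved as Claims~\ref{cl:intlog} and~\ref{claim.moment2} (which were set up for Lemma~\ref{momentLem1}). With $a\ge k$, each term $k^\ell a^{k-\ell}$ (resp.\ $\tfrac{k!}{(k-\ell)!}a^{k-\ell}$) in the resulting sums is bounded by $a^k$, yielding $(k+1)a^k$.

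You instead use Brownian scaling to write $\log\|\widetilde\beta_t\| \stackrel{(d)}{=} \tfrac12\log t + Y$ with $Y=\log\|\widetilde\beta_1\|$, expand multinomially, and reduce everything to the single moment estimate $\E{|Y|^j}\le C\, j!/4^j$. This estimate is correct: the $R<1$ contribution gives exactly $j!/(2\cdot 4^{j+1})$, and the $R\ge 1$ contribution is $O(2^{j/2}\Gamma(j/2+2))$, which is $o(j!/4^j)$ since $R$ has Gaussian tails (indeed $\log R\cdot\1(R\ge 1)$ has finite exponential moments of all orders, so $\E{(\log R)^j\1(R\ge 1)}\le C_\lambda\, j!/\lambda^j$ for any $\lambda>4$). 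Your summation is then clean and the final monotonicity argument using $a\ge k$ is the same in spirit as the paper's.

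Your approach is arguably more conceptual: it isolates the probabilistic reason the bound works, namely that the density $\tfrac12 r^3 e^{-r^2/2}$ of $\|\widetilde\beta_1\|$ vanishes to order $3$ at the origin, which is exactly what produces the decay rate $4^{-j}$ and makes the inner sum collapse. The paper's approach has the advantage of reusing the calculus machinery (Claims~\ref{cl:intlog} and~\ref{claim.moment2}) already needed for the companion Lemma~\ref{momentLem1}, so it adds essentially no new ingredients.
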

We prove these two lemmas in the Appendix. Let us now conclude the proof of the proposition. 
First one can use Lemma \ref{momentLem1} and by induction, we can bound the last integral in \eqref{moment2} by 
$$C^p \cdot \sum_{\ell = 0}^p \alpha_\ell \cdot (4p)^{\ell}(p + |\log \|\widetilde \beta_t\||)^{p-\ell},$$
where  $C$ is a constant and 
$$\alpha_\ell \ :=   \#\left\{1\le i_1\le \dots \le i_\ell \le p\right\} \ = \  \frac{p^\ell}{\ell !}  .$$
Then using Lemma \ref{momentLem2}, \eqref{moment1}, and \eqref{moment2} we obtain the upper bound 
$$\mathbb E[X^p] \ \lesssim \ C^p \cdot p^{2p} \cdot  \sum_{\ell = 0}^p \frac{(4p)^\ell}{\ell !} \ \lesssim \ (Ce^4)^p \cdot p^{2p},$$
which proves the upper bound.

Now we prove the lower bound. Define 
$$\Gamma(p):= \inf_{x\in \mathcal B(0,2/\sqrt p)} \inf_{z,z'\in \mathcal B(0,1/\sqrt p)} \int_0^{1/p}\int_0^{1/p} \mathbb E_{z,z'}\left[\frac{\1(\beta_{1/p} \in \mathcal B(0,1/\sqrt p),\, \widetilde \beta_{1/p} \in \mathcal B(0,1/\sqrt p)}{\|x+\beta_s - \widetilde \beta_t\|^2} \right]\, ds\, dt.$$
Note that by scaling $\Gamma(p) = \Gamma(1)/p$, and that 
$$\Gamma(1) \  \geq \  \frac{1}{16}   \, \bP\left(\sup_{s\le 1} \| \beta_s \| \le 1\right)^2:=c_0>0.$$
Then define 
$$\mathcal C : =\{0\le s_1\le \dots \le s_p\le 1\}\quad \text{and}\quad 
\mathcal D = \left[0,\frac 1p\right]\times \left[\frac 1 p,\frac 2 p\right]\times \dots \times \left[\frac{p-1}p,1\right],$$
and 
$$A:= \left\{\beta_{i/p},\, \widetilde \beta_{i/p} \in \mathcal B(0,1/\sqrt p)  \quad  \text{for all }i=1,\dots,p\right\}.$$
Then for any permutation $\sigma$ of the set $\{1,\dots, n\}$, one has by the Markov property and standard properties of Brownian motion,
\begin{eqnarray*}
&&\int_{\mathcal C} \int_{\mathcal C} \, \prod ds_i 
\prod dt_i\  \mathbb E \left[\prod_{i=1}^p \frac 1{\|\beta_{s_i} - 
\widetilde \beta_{t_{\sigma(i)}}\|^2}\right] 
 \ge \int_{\mathcal D} \int_{\mathcal D}\, \prod ds_i \prod dt_i\  \mathbb E \left[\prod_{i=1}^p \frac {\1(A)}{\|\beta_{s_i} - \widetilde \beta_{t_{\sigma(i)}}\|^2}\right]\\
&=& \int_{\mathcal D} \int_{\mathcal D} \, \prod ds_i \prod dt_i\  \mathbb E \left[\prod_{i=1}^p \frac {\1(A)}
{\|x_i + (\beta_{s_i}- \beta_{i/p}) - (\widetilde \beta_{t_{\sigma(i)}} - \widetilde \beta_{\sigma(i)/p})\|^2}\right]\\
&\ge & \Gamma(p)^p\ \ge\ (c_0/p)^p , 
\end{eqnarray*}
with $x_i = \beta_{i/p} - \widetilde \beta_{\sigma(i)/p}$, for $i=1,\dots,p$. 
Now one has 
$$\mathbb E[X^p] \ \ge \ (p!)^2  \ \inf_\sigma\  \int_{\mathcal C} \int_{\mathcal C}\,  \prod ds_i \, \prod dt_i\   \mathbb E \left[\prod_{i=1}^p \frac 1{\|\beta_{s_i} - \widetilde \beta_{t_{\sigma(i)}}\|^2}\right],$$
and this gives the lower bound using the previous bound and Stirling's formula. 
\end{proof}

\subsection{The limiting term}\label{sec:limit}
We have now all the ingredients to define properly the term $\gamma_G([0,1]^2)$, appearing in Theorem \ref{thm:clt}. 
First define the following subsquares of $[0,1]^2$:  
$$A_{i,j} = [(2j-2)2^{-i}, (2j-1)2^{-i}]\times [(2j-1)2^{-i}, (2j) 2^{-i}],$$ 
for $i\ge 1$ and $j\leq 2^{i-1}$. Define also  $\C_1=\{s,t\in[0,1]:\, s\leq t\}$, the closure of the union of all these squares.

A straightforward computation shows that 
\[
\E{\int_0^1\int_0^1 G(\beta_s,\beta_t)\,ds\, dt}=\infty.
\]
 
However, if we consider disjoint intervals, then this expectation is finite as we already proved in Proposition~\ref{pro:carleman}, i.e.
\[
\E{\int_0^{1/2}\int_{1/2}^1 G(\beta_s,\beta_t)\,ds\, dt} <\infty,
\]
and by scaling the same fact holds when integrating over any of the squares $A_{i,j}$. 
This observation is at the heart of the following proposition: 

\begin{proposition}\label{pro:conv}
	Let $\beta$ be a standard $4$-dimensional Brownian motion. Then the following limit exists in $L^2$:   
	\[
	\gamma_G(\C_1)\ :=\ \sum_{i=1}^{\infty}\sum_{j=1}^{2^{i-1}}\, \left( \int_{A_{i,j}} G(\beta_s,\beta_t)\, ds\, dt - \E{\int_{A_{i,j}}G(\beta_s,\beta_t)\, ds\, dt}\right).
	\]
Moreover, there exists $\lambda\in  \R$, such that 
$$\mathbb E\left[ e^{\lambda \cdot \gamma_G(\C_1)}\right] \ =\  \infty. $$
\end{proposition}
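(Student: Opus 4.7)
Write $W_i := \sum_{j=1}^{2^{i-1}} X_{i,j}$ and $Z_N := \sum_{i=1}^N W_i$, where $X_{i,j}$ denotes the centered summand over $A_{i,j}$. I aim to show $(Z_N)$ is Cauchy in $L^2$, with limit $\gamma_G(\C_1)$. First, by Brownian scaling (the change of variable mapping $A_{i,j}$ onto $A_{1,1}$ combined with $\beta_{ct} \stackrel{(d)}{=} \sqrt{c}\,\beta_t$ and $G(cz) = c^{-2}G(z)$), $X_{i,j} \stackrel{(d)}{=} 2^{-(i-1)} Y$, where $Y$ is a fixed copy of the centered integral over $A_{1,1}$. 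Proposition \ref{pro:carleman} yields $Y \in L^2$, hence $\vr{X_{i,j}} \leq C\cdot 2^{-2(i-1)}$.

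\textbf{Variance bound.}
For a fixed $i$, the $(X_{i,j})_j$ are functions of Brownian increments over the pairwise disjoint intervals $I_{i,j}=[(2j-2)2^{-i}, 2j\cdot 2^{-i}]$, hence mutually independent; this gives $\vr{W_i}=2^{i-1}\vr{X_{i,1}} \leq C\cdot 2^{-(i-1)}$. Applying Cauchy--Schwarz at the \emph{aggregated} level yields $|\cov(W_i,W_{i'})| \leq \sqrt{\vr{W_i}\vr{W_{i'}}} \leq C\cdot 2^{-(i+i')/2}$. The step of grouping within each level is crucial: naively applying Cauchy--Schwarz at the individual level $X_{i,j}$ gives only $|\cov(X_{i,j},X_{i',j'})| \lesssim 2^{-(i+i')}$, which after summing over the $2^{i'-1}$ nested pairs at each dyadic height produces a divergent double sum. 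Grouping absorbs the independence gain. Summing for $M<N$,
\[
\vr{Z_N - Z_M} \leq \sum_{i>M}\vr{W_i} + 2\sum_{M<i<i'}|\cov(W_i,W_{i'})| \lesssim 2^{-M} \to 0.
\]
Thus $(Z_N)$ is Cauchy in $L^2$ and $\gamma_G(\C_1)$ is well defined as its limit.

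\textbf{Infinite exponential moment.}
From Proposition \ref{pro:carleman} and the remark $X \stackrel{(d)}{=} 4\pi^2 \int_{A_{1,1}} G\,ds\,dt$, the variable $X_{1,1}$ satisfies $\E{X_{1,1}^p} \geq c^p p^p$ for all large $p$. By Stirling $p^p/p! \sim e^p/\sqrt{p}$, so
\[
\E{e^{\lambda X_{1,1}}} \geq \sum_p \frac{\lambda^p c^p p^p}{p!} \gtrsim \sum_p \frac{(ce\lambda)^p}{\sqrt{p}},
\]
which diverges once $\lambda > 1/(ce)$. To transfer this divergence to $\gamma_G(\C_1) = X_{1,1} + \Delta$ with $\Delta := \sum_{i\geq 2} W_i$ (an $L^2$ variable by the previous paragraph), I would invoke H\"older with conjugate exponents $q,q'>1$:
\[
\E{e^{\lambda X_{1,1}}} \leq \E{e^{q\lambda \gamma_G(\C_1)}}^{1/q}\; \E{e^{-q'\lambda \Delta}}^{1/q'}.
\]
If $\Delta$ has a finite MGF at $-q'\lambda$, the LHS being infinite forces $\E{e^{q\lambda \gamma_G(\C_1)}}=\infty$, giving the result.

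\textbf{Main obstacle.}
The delicate point is showing $\Delta$ has a finite MGF in a neighborhood of $0$. Naive Rosenthal bounds on the iid sums within each level only give $\|\Delta\|_p \lesssim p^3$, which is insufficient. I expect one must use the dyadic martingale structure in $\mathcal{F}_i := \sigma(\beta_{k\,2^{-i}}: k\in\Z)$ together with Burkholder--Davis--Gundy to bound moments of $\Delta$ by those of a geometrically-decaying quadratic variation, producing sub-exponential tails. Alternatively one can bypass H\"older entirely and reproduce the lower bound $\E{\gamma_G(\C_1)^p} \gtrsim c^p p^p$ directly by an analogue of the proof of Proposition \ref{pro:carleman}, isolating configurations of the Brownian path inside $\C_1$ making $p$ prescribed Green-kernel factors simultaneously large.
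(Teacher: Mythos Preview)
Your $L^2$ argument is correct and matches the paper's: Brownian scaling gives $X_{i,j}\stackrel{(d)}{=}2^{-(i-1)}Y$ with $Y\in L^2$ by Proposition~\ref{pro:carleman}, the $(X_{i,j})_j$ at a fixed level are independent, and the triangle inequality in $L^2$ (equivalently your Cauchy--Schwarz at the aggregated level) yields a convergent geometric series.

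The genuine gap is precisely where you flag it. Your H\"older scheme for the exponential moment needs $\Delta=\sum_{i\ge 2}W_i$ to have a finite MGF at some nonzero point, and neither of your proposed fixes is carried out; the BDG route in particular would require moment bounds on the bracket that are not clearly better than what you already have.

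The paper avoids this completely via the \emph{self-similarity of $\gamma_G(\C_1)$ itself}. Splitting the dyadic sum according to whether $A_{i,j}\subset[0,1/2]^2$ or $A_{i,j}\subset[1/2,1]^2$ (for $i\ge 2$) and using Brownian scaling gives, almost surely,
\[
\gamma_G(\C_1)\ =\ \tfrac12\big(\gamma_1+\gamma_2+X-\E{X}\big),
\]
where $\gamma_1,\gamma_2$ are two \emph{independent copies of $\gamma_G(\C_1)$} and $X$ has the law of Proposition~\ref{pro:carleman}. Rearranging and applying Cauchy--Schwarz,
\[
\E{e^{\lambda X}}\ \le\ e^{\lambda\E{X}}\,\E{e^{4\lambda\,\gamma_G(\C_1)}}^{1/2}\,\E{e^{-2\lambda(\gamma_1+\gamma_2)}}^{1/2}.
\]
The left side is infinite for some $\lambda>0$ by the lower bound in Proposition~\ref{pro:carleman}, hence one of the right factors is infinite; since $\gamma_1,\gamma_2$ each have the law of $\gamma_G(\C_1)$, either outcome yields $\E{e^{\lambda'\gamma_G(\C_1)}}=\infty$ for some $\lambda'\in\R$. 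The point is that the ``remainder'' in this decomposition is not a new object whose MGF you must control from scratch, but copies of $\gamma_G(\C_1)$ itself, so the argument closes on a dichotomy rather than on an auxiliary estimate.
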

\begin{remark}
{\em We take by definition $\gamma_G([0,1]^2) : = 2\, \gamma_G(\C_1)$.}
\end{remark}

\begin{proof}[\bf Proof]
By Brownian scaling we have for any $j\le 2^{i-1}$, 
\begin{align}\label{eq:scaling}
	\int_{A_{i,j}} G(\beta_s,\beta_t)\, ds\, dt\  \stackrel{law}{=} \ \frac{1}{2^{i-1}} \int_0^{1/2}\int_{1/2}^1 G(\beta_s,\beta_t)\, ds\, dt,
\end{align}
and moreover, by Proposition \ref{pro:carleman} these random variables have finite second moment. 
Now by the triangle inequality for the $L^2$-norm we have 
\begin{align*}
&\ \norm{\sum_{i=1}^{n}\sum_{j=1}^{2^{i-1}} \left( \int_{A_{i,j}} G(\beta_s,\beta_t)\,ds\, dt - \E{\int_{A_{i,j}} G(\beta_s,\beta_t)\,ds\, dt } \right)
}_{2} 
\\
&\quad \leq \ \sum_{i=1}^{n}\ \norm{\sum_{j=1}^{2^{i-1}} \left( \int_{A_{i,j}} G(\beta_s,\beta_t)\,ds\, dt - \E{\int_{A_{i,j}} G(\beta_s,\beta_t)\,ds\, dt } \right)
}_2\\
&\quad =\ \sum_{i=1}^{n}\ \left(\sum_{j=1}^{2^{i-1}} \E{\left( \int_{A_{i,j}} G(\beta_s,\beta_t)\,ds\, dt - \E{\int_{A_{i,j}} G(\beta_s,\beta_t)\,ds\, dt }\right)^2} \right)^{1/2},
\end{align*}
where for the last inequality we used the independence of the terms $\int_{A_{i,j}}G(\beta_s,\beta_t)\,ds\, dt$, when the rectangles~$A_{i,j}$  are all in the same subdivision. Using~\eqref{eq:scaling} this last sum is equal to
\begin{align*}
\sum_{i=1}^{n} \sqrt{\frac{2^{i-1}}{2^{2(i-1)}} \cdot \vr{\int_0^{1/2}\int_{1/2}^1G(\beta_s,\beta_t)\,ds\, dt}}= \sqrt{\vr{\int_0^{1/2}\int_{1/2}^1 G(\beta_s,\beta_t)\,ds\, dt}}\sum_{i=1}^{n} \frac{1}{2^{(i-1)/2}},
\end{align*}
which converges as $n\to\infty$.

Now we prove that this random variable has some infinite exponential moment, using the same argument as in Le Gall \cite{LG94}. First note that by definition we can express $\gamma_G(\C_1)$ almost surely as 
$$\gamma_G(\C_1) \  = \ (\gamma_1 + \gamma_2 + X-\mathbb E[X])/2,$$ 
with $\gamma_1$ and $\gamma_2$ two independent random variables with the same law as $\gamma_G(\C_1)$, and $X$ a random variable with the same law as the random variable from Proposition \ref{pro:carleman}. Now the lower bound in the latter proposition shows that $X$ has some infinite exponential moment: there exists $\lambda >0$, such that $\E{\exp(\lambda X)}= \infty$. Since 
$X = 2\gamma_G(\C_1) - \gamma_1-\gamma_2+\mathbb E[X]$, it implies by Cauchy-Schwarz that either 
$\E{\exp(4\lambda \gamma_G(\C_1))}$ or $\E{\exp(-2\lambda \gamma_G(\C_1))}$ is infinite.
\end{proof}

\section{Intersection and non-intersection probabilities}\label{sec:nonint}

The goal of this section is to obtain an asymptotic expression for the probability of non-intersection of a two sided walk with  simple random walk, when one walk is conditioned to end up at a specific location. Our proofs will rely heavily on the following estimate of Lawler on the non-intersection probability of a two sided-walk with a simple random walk. 

\begin{theorem}{\rm (\cite[Corollary 4.2.5]{Law91})}\label{thm:lawler}
Let $\RR^1,\RR^2$ and $\RR^3$ be the ranges of
three independent random walks in $\Z^4$ starting at 0. Then, 
\begin{equation}\label{lawler-key2}
\lim_{n\to\infty} \ \log n\cdot  
\pr{( \RR^1[0,n]\cup \RR^2[0,n])\cap\RR^3[1,\infty)=\emptyset,\ 
0\not\in \RR^1[1,n]}=\frac{\pi^2}{8}.
\end{equation}
\end{theorem}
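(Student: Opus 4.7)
\textbf{Proof plan for Theorem \ref{thm:lawler}.}

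The plan is to follow Lawler's strategy \cite[Ch.~4]{Law91}. The target asymptotic is $p_n \sim \frac{\pi^2}{8\log n}$ where $p_n$ denotes the probability in \eqref{lawler-key2}. The proof is based on a two-sided comparison: a first-moment computation of the expected number of intersection configurations, and a first-intersection (or last-intersection) decomposition that relates this expectation back to $p_n$ itself.

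First I would compute the expected number of intersections ignoring all non-intersection constraints. Using the local CLT \eqref{localCLT} at the origin together with $p_m(0) \sim \frac{8}{\pi^2 m^2}$ in $\Z^4$, one gets
\[
N_n \; := \; \E{\bigl|\{(j,k):\ j\ge 1,\ 0\le k\le n,\ S^3_j=S^1_k\}\bigr|} \;=\; \sum_{k=0}^n\sum_{j\ge 1} p_{j+k}(0) \;\sim\; \frac{8}{\pi^2}\log n,
\]
and an analogous quantity for intersections with $\RR^2$ contributes at the same order. The restriction $0\notin \RR^1[1,n]$ and the non-intersection event only shave off a multiplicative factor tending to $1$ at this level, since they concern the origin contribution which is $O(1)$.

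Second, I would perform a first-intersection decomposition on $S^3$. Let $\sigma$ be the first $j\ge 1$ at which $S^3_j$ lands in $\RR^1[0,n]\cup\RR^2[0,n]$, conditioned on the event that this happens. Applying the strong Markov property at time $\sigma$, and reversing the portion of the hit walk ($S^1$, say) about the meeting point, the walks split into independent pieces. Combining with uniform non-intersection estimates of Lawler (the sharp two-sided versions of \cite[Thm.~3.5.1, Thm.~4.3.3]{Law91}), one obtains that the expected total number of intersections, \emph{given that at least one occurs}, is again $\sim \frac{8}{\pi^2}\log n$. Equating this with the unconditional $N_n$ via
\[
N_n \;=\; \Prob(\text{at least one intersection})\cdot \E{\#\text{intersections}\mid \text{at least one}} \;=\; (1-p_n)\cdot \frac{8}{\pi^2}\log n\,(1+o(1))
\]
forces $p_n\log n \to \pi^2/8$.

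The main obstacle is making the first-intersection decomposition quantitative and uniform. One must show that, for typical intersection positions $x$ with $\sqrt n \lesssim \|x\|\lesssim \sqrt n (\log n)^{O(1)}$, the expected number of further intersections starting from $x$ is indeed $\sim \frac{8}{\pi^2}\log n$ with negligible dependence on $x$; this is precisely what the loop-erasure/Harnack analysis of Lawler's Chapter~4 establishes, and is the reason for including the \emph{three}-walk formulation (the role of $\RR^2$ is to certify that the single-point `trivial' intersection at the origin is handled correctly, which is also the purpose of the auxiliary condition $0\notin\RR^1[1,n]$).
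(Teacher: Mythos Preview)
The paper does not prove this result; it is cited from Lawler \cite[Corollary~4.2.5]{Law91} and used as a black box in Section~\ref{sec:nonint}. So there is nothing in the paper to compare against; the comparison is with Lawler's argument.

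Your plan contains the right ingredients, but the decisive step is mis-assembled. The identity you write,
\[
N_n \;=\; (1-p_n)\cdot \E{\#\text{intersections}\mid \text{at least one}},
\]
with both $N_n$ and the conditional expectation equal to $\frac{8}{\pi^2}\log n\,(1+o(1))$, yields only $1-p_n=1+o(1)$, i.e.\ $p_n=o(1)$. The rate and the constant $\pi^2/8$ are buried in the two $o(1)$ terms; extracting them from this relation would require both sides to additive precision $o(1)$, which your outline does not address and which is at least as hard as the theorem itself. A \emph{first}-intersection decomposition on $S^3$ inevitably produces this vacuous relation, because after the first hit the remaining pieces are again three walks from a common point and contribute another $\sim\frac{8}{\pi^2}\log n$ expected intersections.

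The fix is precisely the alternative you mention parenthetically and then drop: a \emph{last}-intersection decomposition. Decompose the event $\{\RR^3[1,\infty)\cap(\RR^1[0,n]\cup\RR^2[0,n])\neq\emptyset\}$ according to the last $j$ with $S^3_j\in\RR^1[0,n]\cup\RR^2[0,n]$. After time $j$, the non-intersection requirement on $S^3[j+1,\infty)$ is, once the hit walk is split at the meeting point, exactly a three-walk non-intersection event of probability $\sim p_n$ (the remaining walk from the origin contributes only an $o(1)$ correction when the meeting point is at distance $\asymp\sqrt n$). Summing over all intersection configurations now gives
\[
1-p_n \;=\; N_n\cdot p_n\,(1+o(1)),
\]
hence $p_n\sim 1/N_n\sim \frac{\pi^2}{8\log n}$. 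This is essentially how the constant emerges in Lawler's Chapter~4; making the $(1+o(1))$ rigorous uses the a priori bound $p_n\asymp 1/\log n$ and the slow variation $p_n/p_{2n}\to 1$ from Chapter~3 to control the dependence on $k$ and on the location of the meeting point.
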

Recall the definition of $f_k(x)$ from \eqref{fk} and also the shorthand notation $n_\alpha = n/(\log n)^\alpha$. 

\begin{proposition}\label{lem:inter}
	Let $\alpha>8$, $n-n_\alpha>k>n_\alpha$ and  $x \in \Z^4$ with $\sqrt{n_{2\alpha}}\leq \norm{x}\leq \sqrt{n}(\log n)^2$. Let $\RR^1, \RR^2$ and $\RR^3$ be the ranges of three independent random walks starting from $0$. Then	\begin{align*}
		\pr{(\RR^1[0,k]\cup \RR^2[0,n])\cap \RR^3[1,\infty) =\emptyset, S^1(k) = x, 0\notin \RR^1[1,k]} = \frac{\pi^2}{8} \cdot \frac{1}{\log n} \cdot 		p_k(x)\cdot  (1+o(1)) \\+ \mathcal{O}\left(\frac{1}{(\log n)^{3/2}}\cdot 
		f_k\left(x/2\right)\right).
			\end{align*}
\end{proposition}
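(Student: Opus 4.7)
The basic strategy is to apply the Markov property to $S^1$ at an intermediate time $i\ll k$, factoring the probability into a ``local'' non-intersection event on the short piece $S^1[0,i]$ -- to which Lawler's Theorem~\ref{thm:lawler} applies and produces the factor $\pi^2/(8\log n)(1+o(1))$ -- together with the remaining bridge probability for a random walk from $S^1(i)$ to $x$ in time $k-i$, which by the local CLT and Claim~\ref{cl:usefulpikx} is essentially $p_k(x)$. Concretely, take $i := k/(\log n)^\beta$ with $\beta$ large enough to feed Claim~\ref{cl:usefulpikx} (e.g., $\beta > 2\alpha+10$), and set $z:=S^1(i)$. By the Markov property and the independence of $S^1, S^2, S^3$,
\[
\pr{\text{event}} \ =\ \E{\mathbf 1_{E_1}\cdot F(S^1[0,i],S^3)},
\]
where $E_1 := \{(\RR^1[0,i]\cup\RR^2[0,n])\cap\RR^3[1,\infty)=\emptyset,\ 0\notin\RR^1[1,i]\}$ and $F(\omega,\xi) := \bP_{\omega(i)}(S(k-i)=x,\ S[0,k-i]\cap(\xi[1,\infty)\cup\{0\})=\emptyset)$ is the ``bridge'' probability of a fresh walk $S$ from $\omega(i)$.

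The main step is to show that on a regularity sub-event $G\subseteq E_1$ -- controlling $\|z\|$ to be comparable to $\sqrt i$ via \eqref{upper.heatkernel} and \eqref{dispertion}, and imposing that $S^3$ has escaped the ball $\B(0,\sqrt i)$ sufficiently early -- one has $F(\omega,\xi) = p_k(x)(1+o(1))$. The upper bound is immediate by dropping constraints and invoking Claim~\ref{cl:usefulpikx}: $F\le p_{k-i}(x-z) = p_k(x)(1+o(1))$. For the matching lower bound,
\[
F(\omega,\xi)\ \ge\ p_{k-i}(x-z) - \bP_z(H_0\le k-i,\,S(k-i)=x) - \bP_z(S[0,k-i]\cap\xi[1,\infty)\ne\emptyset,\,S(k-i)=x).
\]
The $H_0$-term is controlled by the strong Markov property at $H_0$ combined with the Green's function bound \eqref{Green.bound}: it is at most $\bP_z(H_0<\infty)\cdot \max_t p_t(x) \lesssim (\log n)^{O(1)}\, p_k(x)/i$, which is $o(p_k(x)/\log n)$ for $\beta$ large.

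The $\xi$-hitting term is the principal technical obstacle. I would split $\xi[1,\infty) = \xi[1,T]\cup\xi[T,\infty)$ at a threshold $T$ (e.g.\ $T\asymp i(\log n)^c$) chosen so that by time $T$ the walk $S^3$ has separated from the origin at scale $\sqrt T$. For the early part $\xi[1,T]$, the $E_1$ conditioning has already forced $\xi[1,T]$ to avoid $\RR^1[0,i]$; combined with the endpoint constraint $S(k-i)=x$, the resulting hitting probability is estimated by summing $p_t(y-z)\,p_{k-i-t}(x-y)/p_{k-i}(x-z)$ over $y\in\xi[1,T]$ and $t\le k-i$, reducing to standard Lawler non-intersection bounds. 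For the late part $\xi[T,\infty)$, large-deviation control (\eqref{upper.heatkernel} and \eqref{hit.ball}) places $S^3$ at distance $\gtrsim \sqrt T$ from the origin, so the walk $S$ -- conditioned to land at $x$ -- has small hitting probability. The contributions from $E_1\setminus G$ and from splitting the Gaussian exponential $\exp(-2\|x\|^2/k)$ via Cauchy--Schwarz -- a step that naturally produces $\exp(-\|x\|^2/(2k)) = \text{const}\cdot k^2 f_k(x/2)$ rather than $f_k(x)$ -- combine to the stated error $\mathcal O(f_k(x/2)/(\log n)^{3/2})$.

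Finally, Theorem~\ref{thm:lawler} applied to the mismatched time scales $(i,n)$ (with $S^2[0,n]$ and $S^1[0,i]$ playing the two ``positive-time'' roles and $S^3$ the ``negative-time'' role) gives $\pr{E_1} = \frac{\pi^2}{8\log n}(1+o(1))$, the mismatch being absorbed since $\log i/\log n = 1 + O(\log\log n/\log n)$. Combining $\pr{E_1} \sim \pi^2/(8\log n)$ with $F \sim p_k(x)$ on $G$, and the error accounting above on $E_1\setminus G$, yields the proposition. The hardest part throughout is the $\xi$-hitting estimate on $G$: extracting the sharp form of Lawler's non-intersection estimates in the bridge-conditioned setup, so that the $\xi$-hitting error is genuinely $o(1)$ relative to the main term $p_k(x)/\log n$ and not merely $O(1/\log n)$.
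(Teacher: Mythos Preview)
Your high-level structure is exactly the paper's: split $S^1$ at an intermediate time $i=k/(\log n)^\beta$, show via Theorem~\ref{thm:lawler} and Claim~\ref{cl:usefulpikx} that the ``leading'' piece $\pr{E_1,\,S^1(k)=x}=\frac{\pi^2}{8\log n}\,p_k(x)(1+o(1))$ (this is the paper's Lemma~\ref{lem:leading}), and then control the correction coming from the extra constraints on $S^1[i,k]$. But your handling of that correction has two genuine gaps.

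First, the pointwise assertion ``$F(\omega,\xi)=p_k(x)(1+o(1))$ on $G$'' cannot hold as written. The $\xi$-hitting term in your lower bound for $F$ is at best of order $f_k(x/2)\cdot\frac{\log\log n}{\log n}$, and for $\|x\|\gg\sqrt k$ (allowed by the hypotheses) one has $f_k(x/2)\gg p_k(x)$, so this error swallows the main term. That is precisely why the proposition carries a \emph{separate} $\mathcal O\big(f_k(x/2)/(\log n)^{3/2}\big)$ term rather than a bare $(1+o(1))$. The paper never tries to get $F\approx p_k(x)$ pointwise; instead it bounds the full expectation $\E{\1_{E_1}\cdot\big(p_{k-i}(x-z)-F\big)}$ directly (Lemma~\ref{lem:notleading} for the $\xi$-hitting part, Claim~\ref{cl:hitxthen0} for the $H_0$ part).

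Second, your route to the error $\mathcal O\big(f_k(x/2)/(\log n)^{3/2}\big)$ is missing both ingredients. The factor $f_k(x/2)$ does not arise from ``Cauchy--Schwarz on the Gaussian''; it comes from Lemma~\ref{lem:bruno}, which decouples the endpoint constraint $\{S^1(k)=x\}$ from an intersection event at the price of $f_k(x/2)$ times the \emph{unconditioned} intersection probability (from either $0$ or $x$). The extra $(\log n)^{-1/2}$ does not come from ``$E_1$ forces $\xi[1,T]$ to avoid $\RR^1[0,i]$'' --- that constraint is on $S^1[0,i]$ and says nothing about the fresh continuation from $z$, which is independent of it. Rather, one must \emph{retain} the constraint $\RR^2[0,n]\cap\RR^3[1,\ell]=\emptyset$ (for a second scale $\ell\ll i$) throughout the error estimate and invoke Lawler's one-sided bound \cite[Theorem~4.4.1]{Law91}, which contributes $\asymp(\log n)^{-1/2}$; combined with the $\frac{\log\log n}{\log n}$ from \cite[Theorem~4.3.3]{Law91} (after Lemma~\ref{lem:bruno} has removed the endpoint), this yields the stated error.
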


Note that in the expression above we cannot always absorb the second term in the $o(1)$ term, since~$p_k(x)$ is not always comparable to $f_k(x/2)$. However, this second term is going to be negligible when we take the sum over all time indices and all points in space.

The rest of this section is devoted to the proof of the above proposition. 

The following lemma on the probability that two walks intersect when one walk is conditioned to end up at a specific location is a crucial ingredient in the proof of the proposition above and will be used in later parts too.

\begin{lemma}\label{lem:bruno}
	There exists a positive constant $C$ so that the following holds. Let $z\in 
	\Z^4$ and let~$S^1$ and $S^2$ be two independent simple random walks in $\Z^4$ starting from $0$ and~$z$ respectively. For  $a,k\in \N$ and $b\in \N\cup \{\infty\}$ let 
	$A(a,b,k) = \{\RR^1[0,k]\cap \RR^2[a,b]\neq \emptyset\}$. Then for all $x\in \Z^4$ with $\norm{x}\leq k^{2/3}$ we have 
	\begin{align*}
	\prstart{A(a,b,k), S^1(k)=x}{0,z} \leq Cf_k\left(x/2\right) \cdot \max\left(\prstart{A(a,b,k)}{0,z}, \prstart{A(a,b,k)}{x,z} \right).
	\end{align*}
\end{lemma}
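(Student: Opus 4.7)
My plan is to condition on the walk $S^2$, reducing the problem to controlling the probability that $S^1$ hits a deterministic finite set $V := \RR^2[a,b] \subset \Z^4$. Setting $\tau = \inf\{j \geq 0 : S^1(j) \in V\}$, it suffices to show that for any such $V$,
\[
\prstart{\tau \leq k,\, S^1(k) = x}{0} \;\leq\; C f_k(x/2)\cdot \max\bigl(\prstart{\tau \leq k}{0},\, \prstart{\tau \leq k}{x}\bigr).
\]
I would first split the event according to $\tau \leq k/2$ or $\tau > k/2$. In the second case, reversibility of simple random walk on $\Z^4$ applied to the path $(S^1(0),\ldots,S^1(k))$ identifies it with $\{\tau < k/2,\, S^1(k) = 0\}$ for a walk started at $x$. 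Hence it is enough to bound, for $(x_0, x_1) \in \{(0, x), (x, 0)\}$,
\[
\prstart{\tau \leq k/2,\, S^1(k) = x_1}{x_0} \;\leq\; C f_k(x/2)\, \prstart{\tau \leq k}{x_0}.
\]

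Next, I would apply the strong Markov property at $\tau$ to rewrite this as $\sum_{j \leq k/2,\, y \in V} \prstart{\tau = j,\, S^1(j) = y}{x_0}\, p_{k-j}(x_1 - y)$, and split according to whether $\|y - x_1\| \geq \|x\|/2$ (Case I: first hit close to the starting point $x_0$) or $\|y - x_1\| < \|x\|/2$ (Case II: first hit close to the endpoint $x_1$). In Case I, the assumption $\|x\| \leq k^{2/3}$ combined with $\|x_1 - y\| \leq 3\|x\|/2 \leq \tfrac{3}{2}k^{2/3}$ and $k - j \in [k/2, k]$ allows the local CLT~\eqref{localCLT} to give $p_{k-j}(x_1 - y) \leq C f_{k-j}(x_1 - y)$; combining $\|x_1 - y\|^2 \geq \|x\|^2/4$, $k-j \leq k$, and the factor $2$ in the exponent of $f_k$ yields $p_{k-j}(x_1 - y) \leq C' f_k(x/2)$ uniformly, and summing collapses the remaining indices to $\prstart{\tau \leq k/2}{x_0} \leq \prstart{\tau \leq k}{x_0}$.

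Case II is the main obstacle: here $y$ is close to $x_1$ but $\|y - x_0\| > \|x\|/2$. My plan is to introduce a second time-reversal, using the identity
\[
\prstart{\tau = j,\, S^1(j) = y,\, S^1(k) = x_1}{x_0} \;=\; \prstart{\sigma = k - j,\, S^1(k - j) = y,\, S^1(k) = x_0}{x_1},
\]
where $\sigma$ denotes the last visit of $V$. A strong Markov decomposition at $\sigma$ then splits this into (a) the probability that the walk from $x_1$ has visited $V$ and is at $y$ at time $\sigma$, which sums over $y \in V$ to at most $\prstart{\tau \leq k}{x_1}$, and (b) an ``escape'' factor $\leq p_{k-\sigma}(x_0 - y)$. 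Since $\|x_0 - y\| > \|x\|/2$, this escape factor obeys the same Case I-type Gaussian bound, so after summing we recover $\leq C f_k(x/2)\, \prstart{\tau \leq k}{x_1}$, which is absorbed into the maximum on the right-hand side.

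The hard part will be controlling the Case II sum over the time index $k - \sigma$, since the pointwise inequality $p_{k-\sigma}(x_0 - y) \leq C f_k(x/2)$ fails for $k - \sigma$ small. I plan to exploit the basic range constraint $p_m(z) = 0$ for $m < \|z\|_1$, which restricts the effective sum to $m \gtrsim \|x\|$; on this range, the factor $2$ in the exponent of $f_k$ is sharp enough to make the ratio $p_m(\cdot)/f_k(x/2)$ summable, so that the full Case II contribution is absorbed into the announced bound after combining with the marginal from (a).
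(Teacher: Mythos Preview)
Your overall reduction (condition on $S^2$, split at $k/2$, reverse) is sound, and your Case I/Case II dichotomy on $\|y-x_1\|$ is natural. But there are two problems, one minor and one fatal.

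\textbf{Case I.} You claim $\|x_1-y\|\le 3\|x\|/2$. This is false: Case I is defined by the \emph{lower} bound $\|y-x_1\|\ge \|x\|/2$, and $y\in V$ can lie arbitrarily far from both $x_0$ and $x_1$. What you actually need is $p_{k-j}(x_1-y)\le C f_k(x/2)$ whenever $\|x_1-y\|\ge \|x\|/2$ and $k-j\ge k/2$; this does hold (by the local CLT when $\|x_1-y\|\lesssim (k-j)^{3/5}$ and by the Gaussian tail~\eqref{upper.heatkernel} otherwise), but not for the reason you gave.

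\textbf{Case II.} Here the gap is real. After your second reversal and the Markov property at time $s=k-j$ you obtain
\[
\prstart{\sigma=s,\,S^1(s)=y,\,S^1(k)=x_0}{x_1}\;=\;p_s(y-x_1)\cdot \prstart{H_V^+>k-s,\,S^1(k-s)=x_0}{y}.
\]
If you now bound the second factor by $p_{k-s}(x_0-y)\le Cf_k(x/2)$ (which is correct, essentially Claim~\ref{cl:pik}), you are left with $\sum_{s,y} p_s(y-x_1)$, i.e.\ the \emph{expected number of visits} of $S^1$ to $V$ in $[k/2,k]$, not the probability $\prstart{\tau\le k}{x_1}$. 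These can differ by an unbounded factor. Conversely, if you try to keep the avoidance condition to recover $\prstart{\sigma=s,\,S^1(s)=y}{x_1}$ in part (a), then (a) and (b) no longer factor: the event $\{H_V^+>k-s\}$ is correlated with $\{S^1(k-s)=x_0\}$. Your ``summability'' remark does not resolve this; summing $p_m(x_0-y)$ over $m$ gives $G_d(x_0-y)\asymp \|x\|^{-2}$, which is not $\lesssim f_k(x/2)$ when $\|x\|^2>k$.

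The paper sidesteps this by first splitting on $\|x\|^2\lessgtr k$. When $\|x\|^2\le k$ one has $f_k(x/2)\asymp k^{-2}$ and a simple time-split at $k/2$ (essentially your argument) already works. When $\|x\|^2>k$ the paper introduces a geometric barrier, the sphere $\mathcal S_x=\partial\mathcal B(x,\|x\|/2)$, and decomposes according to whether the intersection with $\RR^2[a,b]$ occurs before or after the last crossing $\sigma_x$ of $\mathcal S_x$. The point is that every path from $0$ to $x$ must cross $\mathcal S_x$, and at the crossing the remaining displacement has norm exactly $\|x\|/2$; Claim~\ref{cl:pik} then gives the uniform bound $p_i(y-x)\le Cf_k(x/2)$ with no overcounting, and the first-intersection time $\mathcal I$ (a genuine stopping time) provides the factor $\prstart{A(a,b,k)}{0,z}$ or $\prstart{A(a,b,k)}{x,z}$ cleanly.
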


We now state two lemmas and a claim whose proofs are deferred after the proof of Proposition~\ref{lem:inter}.

\begin{lemma}\label{lem:leading}
	Let $\alpha >8$. Let $n_\alpha<k<n-n_\alpha$, $i=k/(\log n)^{5\alpha}$ and   $\sqrt{n_{2\alpha}}\leq \norm{x}\leq  \sqrt{n}(\log n)^2$. Let~$\RR^1, \RR^2$ and $\RR^3$ be the ranges of three independent random walks starting from $0$. Then we have
	\begin{align*}
		\pr{(\RR^1[0,i]\cup \RR^2[0,n])\cap \RR^3[1,\infty) =\emptyset, S^1(k) = x, 0\notin \RR^1[1,i]} = \frac{\pi^2}{8} \cdot \frac{1}{\log n} \cdot 		p_k(x)\cdot  (1+o(1)).
			\end{align*}
\end{lemma}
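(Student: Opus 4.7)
The plan is to use the Markov property of $S^1$ at the intermediate time $i$ to decouple the non-intersection event, which depends on $S^1$ only through $\RR^1[0,i]$, from the endpoint condition $\{S^1(k) = x\}$. Setting
$$E := \{(\RR^1[0,i]\cup \RR^2[0,n])\cap \RR^3[1,\infty) = \emptyset,\ 0\notin \RR^1[1,i]\},$$
and using that $S^1[i,k]$ is independent of $S^1[0,i]$, $S^2$ and $S^3$, we get
$$\mathrm{LHS} \ = \ \sum_{y\in \Z^4} \bP(E,\, S^1(i) = y)\cdot p_{k-i}(x-y).$$

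I would then restrict the sum to $\|y\|\le \sqrt{i}(\log n)^\alpha$. The complementary contribution is bounded via \eqref{upper.heatkernel} by $\bP(\|S^1(i)\| > \sqrt{i}(\log n)^\alpha)\cdot \max_z p_{k-i}(z)\lesssim e^{-c(\log n)^{2\alpha}}/k^2$. On the other hand, \eqref{fk} yields the lower bound $p_k(x)\gtrsim e^{-2(\log n)^{4+\alpha}}/k^2$ in the prescribed range of $x$ and $k$, and since $\alpha>8$ forces $4+\alpha<2\alpha$, this atypical contribution is $o(p_k(x)/\log n)$. For $y$ in the retained range, Claim~\ref{cl:usefulpikx} applied with $\beta = 5\alpha$ and $\gamma = \alpha$ (whose hypotheses $\beta>\alpha+4$ and $4+2\gamma+\alpha-\beta<0$ both hold since $\alpha>8$), combined with the local CLT \eqref{localCLT}, yields $p_{k-i}(x-y) = p_k(x)(1+o(1))$ uniformly in such $y$ and $x$. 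Factoring this approximation out reduces the problem to proving that $\bP(E) = \frac{\pi^2}{8\log n}(1+o(1))$.

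The key observation is that, although $E$ involves two walks of different lengths $i$ and $n$, one can sandwich it between two events to which Theorem~\ref{thm:lawler} applies directly. Let $E_n$ be obtained from $E$ by replacing $\RR^1[0,i]$ and $\RR^1[1,i]$ by $\RR^1[0,n]$ and $\RR^1[1,n]$, and let $\widetilde E_i$ be obtained from $E$ by additionally shortening $\RR^2[0,n]$ to $\RR^2[0,i]$. Then $E_n\subseteq E\subseteq \widetilde E_i$: enlarging $\RR^1$ makes non-intersection harder, while shrinking $\RR^2$ makes it easier. Theorem~\ref{thm:lawler} applied at scale $n$ gives $\bP(E_n)=\frac{\pi^2}{8\log n}(1+o(1))$, and applied at scale $i$ (valid since $i\to\infty$) gives $\bP(\widetilde E_i)=\frac{\pi^2}{8\log i}(1+o(1))$. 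Since $k\ge n_\alpha$ and $i = k/(\log n)^{5\alpha}$ force $i \ge n/(\log n)^{6\alpha}$, we have $\log i = \log n\cdot(1+o(1))$, so the two bounds coincide, yielding the required asymptotic for $\bP(E)$.

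I expect the trickiest bookkeeping to be the first step: verifying that the contribution from the atypical regime is genuinely $o(p_k(x)/\log n)$ uniformly across the prescribed ranges. It is precisely this step that forces the assumption $\alpha>8$, through the comparison of exponents $4+\alpha<2\alpha$ (as powers of $\log n$). Once this is in place, the sandwich argument for $\bP(E)$ is particularly clean and requires no new ingredient beyond Theorem~\ref{thm:lawler} applied at two different length scales.
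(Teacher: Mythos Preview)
Your proof is correct and takes essentially the same approach as the paper: condition on $S^1(i)$, use Claim~\ref{cl:usefulpikx} together with the local CLT to replace $p_{k-i}(x-y)$ by $p_k(x)$ on a typical event for $S^1(i)$, and reduce to $\bP(E)\sim\pi^2/(8\log n)$. Your sandwich $E_n\subseteq E\subseteq \widetilde E_i$ is a welcome addition, since the paper simply cites Theorem~\ref{thm:lawler} for $\bP(E)$ without commenting on the mismatched time horizons $i$ and $n$. One inessential remark: with your choice $\gamma=\alpha$, your argument actually only requires $\alpha>4$ (both the exponent comparison $4+\alpha<2\alpha$ and the hypotheses of Claim~\ref{cl:usefulpikx} hold already then); the paper instead takes the threshold $\sqrt{i}(\log n)^{\alpha+6}$ in the event $D$, and it is the condition $4+2(\alpha+6)+\alpha-5\alpha<0$ from Claim~\ref{cl:usefulpikx} that forces $\alpha>8$ there.
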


\begin{lemma}\label{lem:notleading}
	Same assumptions as in Lemma~\ref{lem:leading}. We have
	\begin{align*}
			\pr{(\RR^1[0,i] \cup \RR^2[0,n])\cap \RR^3[1,\infty)= \emptyset, \RR^1[i,k] 
			\cap \RR^3[1,\infty)\neq \emptyset, S^1(k)=x} \lesssim f_k(x/2) \cdot \frac{\log\log n}{(\log n)^{3/2}}.	
	\end{align*}
\end{lemma}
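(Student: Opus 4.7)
The plan is to decouple $S^1$ at time $i$ via the Markov property. Setting $y := S^1(i)$, the probability we want equals $\E{\mathbf{1}_{A_1}\,h(y, S^3)}$, where $A_1 := \{(\RR^1[0,i]\cup \RR^2[0,n])\cap \RR^3[1,\infty) = \emptyset\}$ is measurable with respect to $(S^1[0,i], S^2, S^3)$, and $h(y, S^3) := \prcond{\widetilde{\RR}[0,t]\cap \RR^3[1,\infty)\neq\emptyset,\ \widetilde{S}(t) = x}{S^3}{y}$ is the conditional probability (given $S^3$) over a fresh walk $\widetilde{S}$ from $y$, with $t := k-i$. Conditionally on $S^3$, Lemma~\ref{lem:bruno} (with $\RR^3[1,\infty)$ playing the role of the fixed set $\RR^2[a,b]$) yields
\[
h(y, S^3)\ \leq\ C\, f_t\big((x-y)/2\big)\,\max\!\big(Q_y(S^3),\,Q_x(S^3)\big),
\]
where $Q_w(S^3) := \prcond{\widetilde{\RR}[0,t]\cap \RR^3[1,\infty)\neq\emptyset}{S^3}{w}$.

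Bounding $\max$ by the sum $Q_y+Q_x$, truncating the tail $\{\|y\|>\sqrt{i}(\log n)^c\}$ (which is negligible by~\reff{upper.heatkernel}), and applying Claim~\ref{cl:usefulpikx} to replace $f_t((x-y)/2)$ by $f_k(x/2)$ on the remaining set, reduces the task to bounding $\E{\mathbf{1}_{A_1}\,Q_w}$ by $\log\log n/(\log n)^{3/2}$ for $w\in\{x,y\}$.

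For $w=x$, Cauchy--Schwarz gives $\E{\mathbf{1}_{A_1}\,Q_x}\leq \sqrt{\pr{A_1}}\,\sqrt{\E{Q_x^2}}$. Theorem~\ref{thm:lawler} yields $\pr{A_1}\lesssim 1/\log n$, while $\E{Q_x^2}$ is exactly the probability that two independent walks issued from $x$ each intersect $\RR^3[1,\infty)$; since $\|x\|^2\geq n_{2\alpha}$, Lemma~\ref{lem-2-inter} bounds it by $(\log\log n/\log n)^2$. Multiplying gives the desired $\log\log n/(\log n)^{3/2}$.

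For $w=y$, set $\bar S := S^1[0,i]\oplus \widetilde S$, which is itself a SRW of length $k$ starting at the origin. Integrating out the fresh walk, the Markov property identifies $\E{\mathbf{1}_{A_1}\,Q_y} = \mathcal P$ with
\[
\mathcal P := \pr{\bar S[0,i]\cap \RR^3[1,\infty) = \emptyset,\ \bar S[i,k]\cap \RR^3[1,\infty) \neq \emptyset,\ \RR^2[0,n]\cap \RR^3[1,\infty) = \emptyset}.
\]
A further application of Cauchy--Schwarz separating the $\bar S$-constraint from the $\RR^2$-constraint gives $\mathcal P \leq \sqrt{g(i)-g(k)}\cdot \sqrt{\pr{\RR^2[0,n]\cap\RR^3[1,\infty) = \emptyset}}$, where $g(s) := \pr{\bar S[0,s]\cap \RR^3[1,\infty) = \emptyset}$. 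The second factor is $\lesssim 1/\sqrt{\log n}$. For the first, using a second-order form of Lawler's one-walk non-intersection asymptotic $g(s) = c/\log s + O(1/(\log s)^2)$ together with $k/i = (\log n)^{5\alpha}$ yields $g(i)-g(k)\asymp \log\log n/(\log n)^2$, hence $\mathcal P \lesssim \sqrt{\log\log n}/(\log n)^{3/2}$, strictly within the target. The main obstacle is precisely this second-order control on $g$, which goes beyond the leading-order Theorem~3.5.1 of~\cite{Law91}; the required refinement can be extracted from the estimates in Chapter~4 of the same reference, but care is needed in our regime where $i,k$ both satisfy $\log i,\log k\asymp \log n$.
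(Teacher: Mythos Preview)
Your decoupling at time $i$ via Lemma~\ref{lem:bruno} and the treatment of the case $w=x$ are essentially sound (with the caveat that Lemma~\ref{lem-2-inter} is stated for one finite walk and two infinite walks from distinct points, whereas you need it for one infinite walk and two finite walks from the \emph{same} point $x$; the proof does adapt, but it is not a direct citation). The genuine gap is in the case $w=y$.

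There you compute $\mathcal P \le \sqrt{g(i)-g(k)}\cdot \sqrt{\pr{\RR^2[0,n]\cap\RR^3[1,\infty)=\emptyset}}$ and then assert two things that are both off by a square root. First, the one-sided non-intersection probability satisfies $g(s)=\pr{\bar S[0,s]\cap\RR^3[1,\infty)=\emptyset}\asymp (\log s)^{-1/2}$ (this is \cite[Theorem~4.4.1]{Law91}), not $(\log s)^{-1}$ as you write; the exponent $-1$ is for the two-sided event of Theorem~\ref{thm:lawler}. Hence $g(i)-g(k)\asymp \log\log n/(\log n)^{3/2}$, not $\log\log n/(\log n)^2$. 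Second, for the same reason $\sqrt{\pr{\RR^2[0,n]\cap\RR^3=\emptyset}}\asymp (\log n)^{-1/4}$, not $(\log n)^{-1/2}$. Putting the corrected values into your Cauchy--Schwarz bound yields only
\[
\mathcal P \ \lesssim\ \frac{\sqrt{\log\log n}}{(\log n)^{3/4}}\cdot \frac{1}{(\log n)^{1/4}}\ =\ \frac{\sqrt{\log\log n}}{\log n},
\]
which exceeds the target $\log\log n/(\log n)^{3/2}$ by a factor of order $(\log n)^{1/2}/\sqrt{\log\log n}$. No choice of H\"older exponents repairs this: the loss is intrinsic to separating the $\bar S$-event from the $\RR^2$-event while they share the same $\RR^3$. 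The paper avoids this loss by splitting $\RR^3$ itself at a time $\ell\ll i$ and using the Markov property of $S^3$ at time $\ell$; this makes the non-intersection constraint on $\RR^2$ depend only on $\RR^3[1,\ell]$ and the intersection constraint on $\RR^1[i,k]$ depend (essentially) only on $\RR^3[\ell,\infty)$, so the two factors multiply without a square-root penalty. You will need some device of this kind to close the $w=y$ case.
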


\begin{claim}\label{cl:hitxthen0}
	Let $\alpha>0$, $k>n_\alpha$ and $\|x\| \leq k^{3/5}$. Then we have 
	\[
	\pr{S^1(k)=x, n_\alpha < H_0<k} \lesssim f_k(x) \cdot \frac{(\log n)^{2\alpha}}{n}.
	\] 
\end{claim}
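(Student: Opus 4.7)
The plan is to apply the strong Markov property at the first return time $H_0 := H_0^+$ of the random walk to the origin, which gives the exact decomposition
\[
\pr{S_k = x,\, n_\alpha < H_0 < k} \ = \ \sum_{j = n_\alpha + 1}^{k-1} \pr{H_0 = j}\, p_{k-j}(x).
\]
The key analytical input is the classical $\Z^4$ tail estimate $\pr{H_0 = j} \lesssim 1/j^2$ (up to a harmless logarithmic correction that the $(\log n)^{2\alpha}$ on the right-hand side leaves room for), coming from transience combined with the renewal identity $p_j(0) = \sum_{i\le j}\pr{H_0=i}\,p_{j-i}(0)$ and the local CLT asymptotic $p_j(0)\asymp 1/j^2$. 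In particular one gets
\[
\sum_{j>n_\alpha}\pr{H_0=j}\ \lesssim \ \frac{1}{n_\alpha}\ =\ \frac{(\log n)^\alpha}{n}.
\]

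I would then split the sum at $j = k/2$. For $n_\alpha<j\le k/2$ one has $k-j\asymp k$, so either Claim~\ref{cl:pik} (in the range $\sqrt{k}\le \|x\|\le k^{3/5}$) or the direct Gaussian heat-kernel bound $p_{k-j}(x)\lesssim 1/(k-j)^2\asymp 1/k^2\asymp f_k(x)$ (in the range $\|x\|\le \sqrt k$) yields $p_{k-j}(x)\lesssim f_k(x)$. Combined with the tail estimate above, this piece contributes $\lesssim f_k(x)\cdot (\log n)^\alpha/n$. For $k/2<j<k$, I would factor out $\pr{H_0=j}\lesssim 1/k^2$ and use $\sum_{i\ge 1}p_i(x)\le G_d(0,x)\lesssim 1/(1+\|x\|^2)$ (via \eqref{Green.bound}), giving a contribution $\lesssim 1/(k^2(1+\|x\|^2))$.

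The main technical point, and what I expect to be the principal obstacle, is comparing this second-piece bound with the target $f_k(x)(\log n)^{2\alpha}/n$. When $\sqrt k \le \|x\|\le k^{3/5}$, Claim~\ref{cl:pik} supplies the uniform bound $p_{k-j}(x)\lesssim f_k(x)$ for every $j$, so one may instead write
\[
\sum_{j>k/2}\pr{H_0=j}\,p_{k-j}(x)\ \lesssim\ f_k(x)\sum_{j>k/2}\frac{1}{j^2}\ \lesssim\ \frac{f_k(x)}{k}\ \lesssim\ f_k(x)\cdot\frac{(\log n)^\alpha}{n},
\]
using $k\ge n_\alpha$. When $\|x\|<\sqrt k$, so that $f_k(x)\asymp 1/k^2$, the comparison reduces to $1/\|x\|^2 \lesssim (\log n)^{2\alpha}/n$, i.e.\ $\|x\|^2\gtrsim n_{2\alpha}$; this is precisely the implicit lower bound on $\|x\|$ supplied by the context where the claim is invoked, as in the hypothesis $\sqrt{n_{2\alpha}}\le\|x\|\le\sqrt n(\log n)^2$ of Proposition~\ref{lem:inter}. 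Combining the two pieces then yields the stated bound.
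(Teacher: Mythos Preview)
Your proof is correct and follows essentially the same approach as the paper: Markov property at $H_0$, a split at $k/2$, Claim~\ref{cl:pik} when $\|x\|\ge\sqrt k$, and the Green's function bound when $\|x\|<\sqrt k$; the paper merely swaps the order of the two case distinctions (it splits on $\|x\|$ first and then on the position of $H_0$, and phrases the $j>k/2$ piece via time reversal rather than summing $p_i(x)$ to $G_d(0,x)$). You are also right that both arguments need the implicit lower bound $\|x\|^2\gtrsim n_{2\alpha}$, which is indeed satisfied at every place the claim is invoked.
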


We now give the proof of Proposition~\ref{lem:inter} which is an easy consequence of the results above and then we will prove Lemmas~\ref{lem:bruno}, \ref{lem:leading} and~\ref{lem:notleading} and Claim~\ref{cl:hitxthen0}.

\begin{proof}[\bf Proof of Proposition~\ref{lem:inter}]

Let $i=k/(\log n)^{5\alpha}$ as in Lemma~\ref{lem:leading}. Then we can write
\begin{align*}
	&\pr{(\RR^1[0,k]\cup \RR^2[0,n])\cap \RR^3[1,\infty) =\emptyset, S^1(k) = x, 0\notin \RR^1[1,k]} \\&= \pr{(\RR^1[0,i]\cup \RR^2[0,n])\cap \RR^3[1,\infty) =\emptyset, S^1(k) = x, 0\notin \RR^1[1,i]} \\
	& \quad - \mathbb{P}(\RR^1[0,i]\cup\RR^2[0,n])\cap \RR^3[1,\infty) =\emptyset, S^1(k) = x,0\notin \RR^1[1,i], \\&\quad \quad \quad \quad \quad \quad \quad \quad\quad \quad \quad \quad \{\RR^1[i,k]\cap \RR^3[1,\infty)\neq \emptyset\}\cup\{0\in \RR^1[i,k]\}). 
\end{align*}
For the first term we use Lemma~\ref{lem:leading} to get the asymptotic expression of the statement. Regarding the second probability by the union bound it is upper bounded by
\begin{align*}
	\pr{(\RR^1[0,i]\cup\RR^2[0,n])\cap \RR^3[1,\infty) =\emptyset, S^1(k) = x, \RR^1[i,k]\cap \RR^3[1,\infty)\neq \emptyset} \\ + \pr{S^1(k) = x, 0\notin \RR^1[1,i], 0\in \RR^1[i,k]}.
\end{align*}
The first term can be bounded using Lemma~\ref{lem:notleading} and the second one using Claim~\ref{cl:hitxthen0}.
\end{proof}

\begin{proof}[\bf Proof of Lemma~\ref{lem:bruno}]

We assume that $p_k(x)>0$, since otherwise the statement is trivial.
Suppose first that $\norm{x}^2\leq k$. Then in this case $p_k(x) \asymp 1/k^2$, and hence for all $y,z\in \Z^4$ we have $p_{k/2}(y,z)\le C /k^2 \asymp p_k(x)$. We now get
\begin{align*}
\bP_{0,z}(A(a,b,k), S^1(k)=x) &= \sum_{y} \bP_{0,z}(A(a,b,k),S^1(k)=x, \ S^1(k/2)=y)\\
&\le  \sum_y \bP_{0,z}\left(\RR^2[a,b] \cap \RR^1[0,k/2]\neq \emptyset, \ S^1(k/2)=y, \ S^1(k)=x\right) \\
& \quad+ \sum_y \bP_{0,z}\left(\RR^2[a,b] \cap \RR^1[k/2,k]\neq \emptyset, \ S^1(k/2)=y, \ S^1(k)=x\right).
\end{align*} 
Using a time inversion and the Markov property this last sum is equal to 
 \begin{align*}
 &\sum_y \bP_{0,z}\left(A(a,b,k/2), \ S^1(k/2)=y\right) \cdot  p_{k/2}(y,x) 
 + \sum_y \bP_{x,z}\left(A(a,b,k/2), \ S^1(k/2)=y\right) \cdot  p_{k/2}(y,0)\\
&\lesssim \ p_{k}(x)\cdot  \sum_y \bP_{0,z}\left(A(a,b,k/2), \ S^1(k/2)=y\right) 
 +\, p_{k}(x) \cdot  \sum_y \bP_{x,z}\left(A(a,b,k/2), \ S^1(k/2)=y\right)\\
&\lesssim\ p_{k}(x)  \cdot  \bP_{0,z}\left(A(a,b,k/2)\right)\, + \, p_{k}(x)\cdot   \bP_{x,z}\left(A(a,b,k/2) \right)\\
&\lesssim \ p_{k}(x) \cdot \max(\bP_{0,z}\left(A(a,b,k) \right), \bP_{x,z}\left(A(a,b,k)\right)).
\end{align*}
This completes the proof in the case when $\norm{x}^2\leq k$. 

Suppose next that $\norm{x}^2\geq k$. We write $\B_x = \B(x,\norm{x}/2)$ and $\S_x = \partial \B_x$.
x\begin{align}
\label{hit}
\nonumber &\bP_w( S^1(i) = x, \ \tau_x\le i)  = \sum_{j\le i}\sum_{y\in \S_x} \bP_w(\tau_x = j,\ S^1(j)=y, \ S^1(i)=x)
 \\&=  \sum_{j\le i}\sum_{y\in \S_x}  \bP_w(\tau_x = j,\ S^1(j)=y) \cdot  p_{i-jx}(y,x)
\lesssim \  f_k(x/2)\cdot  \bP_w(\tau_x\le i)\ \lesssim\ f_k(x/2),
\end{align}
where the first inequality follows from Claim~\ref{cl:pik}. Now one can write 
\begin{align}
\label{twoparts}
\begin{split}
\bP_{0,z}(A(a,b,k), S^1(k)=x) \, &\le\,  \bP_{0,z}\left(\RR^2[a,b]\cap \RR^1[0,\sigma_x]\neq \emptyset, \ S^1(k)=x\right) \\ &\quad +\bP_{0,z}\left(\RR^2[a,b] \cap \RR^1[\sigma_x,k]\neq \emptyset, \ S^1(k)=x\right). 
\end{split}
\end{align}
In order to bound the first term, let us define  
$$\mathcal I := \inf\{i\ge 0 \ :\  S^1(i) \in \RR^2[a,b]\}.$$
Note that for any $i$, the event $\{\mathcal I = i\}$ is $\sigma(\RR^1[0,i])\vee \sigma(\RR^2[a,b])$-measurable. 
Therefore by the Markov property we obtain  
 \begin{align*}
&\bP_{0,z}\left(\RR^2[a,b]\cap \RR^1[0,\sigma_x]\neq \emptyset, \ S^1(k)=x\right) =  \sum_{w}\sum_{i\le k} \bP_{0,z}\left(\mathcal I = i, \sigma_x\ge i, \ S^1(i)=w,\ S^1(k)=x\right)\\
&= \sum_{w, \, i} \bP_{0,z}\left(\mathcal I = i, \ S^1(i)=w\right)\cdot \bP_w(S^1(k-i)=x,\, \tau_x\le k-i)
\\&\lesssim \ f_k(x/2) \cdot \sum_{w, \, i} \bP_{0,z}\left(\mathcal I = i, \ S^1(i)=w \right) \\
& \lesssim \, f_k(x/2) \cdot \bP_{0,z}\left(A(a,b,k) \right),
\end{align*}
where we used~\eqref{hit} for the first inequality.
Now concerning the second term in \eqref{twoparts}, one can look at the path backwards in time, and observe that seen from $x$, $\sigma_x$ is now the first hitting time of $\S_x$, namely $\tau_x$. Therefore, using the strong Markov property,  
\begin{align*}
& \bP_{0,z}\left(\RR^2[a,b] \cap \RR^1[\sigma_x,k]\neq \emptyset, \ S^1(k)=x\right)\\
&=  \sum_{y\in \mathcal S_x,\, i\le k} \bP_{0,z}\left(\RR^2[a,b] \cap \RR^1[i,k]\neq \emptyset, \ S^1(i)=y,\ \sigma_x = i, \ S^1(k)=x\right)\\
&= \sum_{y,i} \bP_{x,z}\left(A(a,b,k-i), \ S^1(k-i)=y, \ \tau_x = k-i\right) \cdot  p_i(y)\\
&\lesssim \, f_k(x/2) \sum_{y,i} \bP_{x,z}\left(A(a,b,k), \ S^1(k-i)=y, \ \tau_x = k-i\right) \\
&\lesssim \, f_k(x/2) \cdot \bP_{x,z}\left(A(a,b,k)\right),
\end{align*}
where for the first inequality we used Claim~\ref{cl:pik} again.
This now completes the proof.
\end{proof}

\begin{proof}[\bf Proof of Lemma~\ref{lem:leading}]

This proof is very similar to~\cite[Proposition~4.3.2]{Law91}, but we include it here for the sake of completeness.
Again we assume that $p_k(x)>0$, otherwise the statement is trivial.

We define $D=\{ \|S^1(i)\|\leq \sqrt{i}(\log n)^{\alpha+6}\}$. Then $\pr{D^c}\leq \exp(-(\log n)^{\alpha+6})$. We set 
\[
A = \left\{  (\RR^1[0,i]\cup \RR^2[0,n])\cap \RR^3[1,\infty) =\emptyset, 0 \notin \RR^1[1,i]    \right\}.
\]
Then by Lawler's estimate, Theorem~\ref{thm:lawler}, we have that 
\[
\pr{A} = \frac{\pi^2}{8} \cdot \frac{1}{\log n} \cdot (1+o(1)).
\]
We now obtain
\begin{align*}
\pr{A, S^1(k)=x, D} \leq 	\pr{A, S^1(k) = x} \leq \pr{A, S^1(k) = x, D} + \pr{D^c}.
\end{align*}
By the Markov property we now get
\begin{align*}
	\pr{A\cap D, S^1(k)=x} = \prcond{S^1(k)=x}{A\cap D}{} \pr{A\cap D} = p_k(x)(1+o(1)) \pr{A\cap D},
\end{align*}
where the last equality follows from \eqref{localCLT} and Claim~\ref{cl:usefulpikx}, since after conditioning on $D$, the time changes to~$k-i$ and the walk starts from some $z$ with $\norm{z} \leq \sqrt{i}(\log n)^{\alpha+6}$. Note that for $\alpha>8$ the assumptions of Claim~\ref{cl:usefulpikx} are satisfied.  We also have 
\[
\pr{A\cap D} = \pr{A} - \pr{A\cap D^c}   = \pr{A} (1+o(1)),
\]
since $\pr{A\cap D^c}\leq \pr{D^c} \leq \exp(-(\log n)^{\alpha+6})$ and $\pr{A}\asymp 1/\log n$.
So far we have showed that 
\begin{align*}
	\pr{A\cap D, S^1(k)=x} = p_k(x) (1+o(1)) \pr{A}.
\end{align*}	
By the assumption on the values of $x$ and $k$, we get that 
\[
\pr{A} p_k(x) \gtrsim \exp (-c (\log n)^{\alpha +4}).
\]
Therefore, 
\[
\pr{A\cap D, S^1(k)=x} + \pr{D^c} = p_k(x) \pr{A}(1+o(1))
\]
and this completes the proof.
\end{proof}

\begin{proof}[\bf Proof of Lemma~\ref{lem:notleading}]
Define $\ell = i/(\log n)^{2\alpha+20}$. Then we can upper bound the probability of the statement as follows
\begin{align}
	\nonumber&\pr{(\RR^1[0,i] \cup \RR^2[0,n])\cap \RR^3[1,\infty)= \emptyset, \RR^1[i,k] 
			\cap \RR^3[1,\infty)\neq \emptyset, S^1(k)=x} \\
			\nonumber&\leq \ 
			\pr{\RR^2[0,n]\cap \RR^3[1,\ell] = \emptyset, \RR^1[i,k]\cap \RR^3[1,\infty)\neq \emptyset, S^1(k)=x}\\
			\nonumber&\leq \ \pr{\RR^2[0,n]\cap \RR^3[1,\ell] = \emptyset, \RR^1[i,k]\cap \RR^3[1,\ell]\neq \emptyset, S^1(k)=x}
			 \label{eq:last} \\&\quad +\pr{\RR^2[0,n]\cap \RR^3[1,\ell] = \emptyset, \RR^1[i,k]\cap \RR^3[\ell,\infty)\neq \emptyset, S^1(k)=x}
\end{align}
From Lemma~\ref{lem:bruno} we have 
\begin{align*}
	&\pr{\RR^1[i,k]\cap \RR^3[1,\ell]\neq \emptyset, S^1(k)=x } \\
	&\lesssim \ f_k(x/2)\cdot \max(\prstart{\RR^1[i,k]\cap \RR^3[1,\ell]\neq \emptyset}{x,0}, \pr{\RR^1[i,k]\cap \RR^3[1,\ell]\neq \emptyset}).
\end{align*}
We now define 
\[
D_3=\left\{ \max_{j\leq \ell} \|S^3(j)\|\leq \frac{\sqrt{i}}{(\log n)^4} \right\}.
\]
By the choice of $\ell$ we have $\pr{D_3^c}\leq \exp(-(\log n)^{\alpha+6})$. 
We also let 
\[
D_1=\left\{ \|S^1(i)\|\geq \frac{\sqrt{i}}{(\log n)^2} \right\}. 
\]
Then $\pr{D_1^c}\leq \frac{1}{(\log n)^8}$, and hence we obtain that
\begin{align*}
	\prstart{\RR^1[i,k]\cap \RR^3[1,\ell]\neq \emptyset}{} &\leq \pr{D_3^c} + \pr{D_1^c} + \prstart{\RR^1[i,k]\cap \RR^3[1,\ell]\neq \emptyset, D_1,D_3}{}\\
&	\leq \frac{2}{(\log n)^8} + \max_{\norm{w}\geq \sqrt{i}/(\log n)^2}\prstart{H_{\B\left(0, \frac{\sqrt{i}}{(\log n)^4} \right)}<\infty}{w}\\
&\lesssim \frac{1}{(\log n)^4},
\end{align*}
where for the last inequality we used \eqref{hit.ball}. When $S^1$ starts from $x$, we then get
\begin{align*}
	\prstart{\RR^1[i,k]\cap \RR^3[1,\ell]\neq \emptyset}{x,0} &\leq \pr{D_3^c} + \prstart{\RR^1[i,k]\cap \RR^3[1,\ell]\neq \emptyset, D_3}{x,0}\\&\leq \exp(-(\log n)^{\alpha+6}) + \prstart{H_{\B\left(0, \frac{\sqrt{i}}{(\log n)^4} \right)}<\infty}{x}\\
	&\leq \exp(-(\log n)^{\alpha+6}) + \frac{1}{(\log n)^{8+3\alpha}},
\end{align*}
where for the last inequality we used again~\eqref{hit.ball} and the assumption on $i$ and $\norm{x}$.

So we overall get that 
\[
\pr{\RR^2[0,n]\cap \RR^3[1,\ell] = \emptyset, \RR^1[i,k]\cap \RR^3[1,\ell]\neq \emptyset, S^1(k)=x} \lesssim f_k(x/2)\cdot \frac{1}{(\log n)^4}.
\]
Regarding the probability appearing in~\eqref{eq:last} we obtain
\begin{align}\label{eq:firsttermlast}
\nonumber&	\pr{\RR^2[0,n]\cap \RR^3[1,\ell] = \emptyset, \RR^1[i,k]\cap \RR^3[\ell,\infty)\neq \emptyset, S^1(k)=x} \\
\nonumber&\leq \pr{\RR^2[0,n]\cap \RR^3[1,\ell] = \emptyset, \RR^1[i,k]\cap \RR^3[\ell,\infty)\neq \emptyset, S^1(k)=x, D_3} + \pr{D_3^c} \\
&\lesssim \pr{\RR^2[0,n]\cap \RR^3[1,\ell] = \emptyset, \RR^1[i,k]\cap \RR^3[\ell,\infty)\neq \emptyset, S^1(k)=x, D_3} +f_k(x/2)\cdot \frac{1}{(\log n)^4},
\end{align}
where for the last inequality we used that for $x$ and $k$ as in the statement of the lemma we have
\[
p_k(x) \gtrsim \exp\left(- 2(\log n)^{\alpha+4} \right) \quad \text{ and } \quad \pr{D_3^c} \leq \exp(-(\log n)^{\alpha+6}).
\]
The first term of~\eqref{eq:firsttermlast} is upper bounded by
\begin{align}\label{eq:zilogn}
 \sum_{\|z\|\leq \frac{\sqrt{i}}{(\log n)^4}}\prstart{\RR^1[i,k]\cap \RR^3[1,\infty) \neq \emptyset, S^1(k)=x}{0,z} \pr{\RR^2[0,n]\cap \RR^3[1,\ell]=\emptyset,S^3(\ell)=z}.
\end{align}
Using Lemma~\ref{lem:bruno} again and~\cite[Theorem~4.3.3]{Law91}
we get that for all $z$ in the range as above 
\[
\prstart{\RR^1[i,k]\cap \RR^3[1,\infty) \neq \emptyset, S^1(k)=x}{z,0} \lesssim f_k(x/2) \cdot \frac{\log\log n}{\log n}.
\]
Therefore, the sum of~\eqref{eq:zilogn} becomes upper bounded by
\begin{align*}
	\pr{\RR^2[0,n]\cap \RR^3[1,\ell]=\emptyset} \cdot f_k(x/2) \cdot \frac{\log\log n}{\log n} \asymp f_k(x/2)\cdot \frac{\log \log n}{(\log n)^{3/2}},
\end{align*}
where for the equivalence we used~\cite[Theorem~4.4.1]{Law91}. Substituting this into~\eqref{eq:firsttermlast} finishes the proof.
\end{proof}

\begin{proof}[\bf Proof of Claim~\ref{cl:hitxthen0}]
In order to upper bound the probability of this event we consider two cases, either $\norm{x}\leq \sqrt{k}$ or $\norm{x}>\sqrt{k}$. If $\norm{x}\leq \sqrt{k}$, then using reversibility and the Markov property we obtain
\begin{align*}
	&\pr{S^1(k)=x, n_\alpha<H_0<k} = \pr{S^1(k)=x, n_\alpha<H_0\leq\frac{k}{2}} + \pr{S^1(k)=x, \frac{k}{2}	<H_0<k} \\
	&=\sum_z \pr{S^1(k/2)=z, S^1(k)=x, n_\alpha<H_0\leq \frac{k}{2}} + \sum_z \pr{S^1(k/2)=z, S^1(k)=x, \frac{k}{2}<H_0<k} \\
	&\lesssim\,  p_k(x) \pr{n_\alpha<H_0\leq \frac{k}{2}} + p_k(x)\prstart{H_0<\infty}{x}\lesssim\ p_k(x) \cdot 
	\frac{(\log n)^{2\alpha}}{n}.
\end{align*}
We turn to the case $\norm{x}>\sqrt{k}$. We now have using the Markov property 
\begin{align*}
	\pr{S^1(k)=x, n_\alpha<H_0<k} &= \prcond{S^1(k)=x}{n_\alpha<H_0<k}{}\pr{n_\alpha<H_0<k} \\&\leq \sup_{i<k} p_i(x)\cdot \pr{n_\alpha<H_0<k} 
	 \lesssim\,   f_k(x) \cdot \frac{(\log n)^{2\alpha}}{n},
\end{align*}
where for the last inequality we used Claim~\ref{cl:pik}.
\end{proof}


\section{Joint convergence in law of the cross terms}
The main purpose of this section is to prove the joint convergence in law of the cross terms. 
First recall the definition of the squares 
$$A_{i,j} = [(2j-2)2^{-i}, (2j-1)2^{-i}]\times [(2j-1)2^{-i}, (2j) 2^{-i}],$$ 
for $i\ge 1$ and $j\leq 2^{i-1}$. 
Recall also the definition of the cross terms from Proposition~\ref{pro:cprange}: $$\chi_n(i,j)= \chi(\RR_n^{(i,2j-1)}, \RR_n^{(i,2j)}) + \chi(\RR_n^{(i,2j)}, \RR_n^{(i,2j-1)}),$$
with 
$$ \RR_n^{(i,j)} = \RR[(j-1)2^{-i}n,j2^{-i}n].$$
\begin{proposition}
\label{prop.moments.chi}
Let $p\ge 1$ be a fixed integer. Then as $n\to \infty$, 
\begin{align}
\label{conv.law.cross}
	\left(\frac{2(\log n)^2}{\pi^4\cdot n}\cdot \chi_n(i,j) \right)_{1\le i\le p,\ 1\le j\le 2^{i-1}} \quad 
\stackrel{(d)}{\Longrightarrow} \quad \left(\int_{A_{i,j}}  G(\beta_s, \beta_t)\, ds\, dt \right)_{1\le i\le p,\ 1\le j\le 2^{i-1}}.
\end{align}
Moreover, all moments converge.
\end{proposition}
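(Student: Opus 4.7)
The plan is to use the method of moments. By Proposition \ref{pro:carleman} combined with Brownian scaling, each random variable $\int_{A_{i,j}} G(\beta_s,\beta_t)\,ds\,dt$ has finite moments of all orders with growth at most $C^p p^{2p}$, so Carleman's condition holds for each marginal. The joint law of the limiting vector is then determined by its mixed moments (via the Cram\'er-Wold device, since every linear combination still satisfies a Carleman bound). It therefore suffices to prove that, for every choice of non-negative integers $(p_{i,j})$ with $M = \sum p_{i,j}$,
$$
\left(\frac{2(\log n)^2}{\pi^4 n}\right)^M \E{\prod_{i,j} \chi_n(i,j)^{p_{i,j}}} \longrightarrow \E{\prod_{i,j} \left(\int_{A_{i,j}} G(\beta_s,\beta_t)\,ds\,dt\right)^{p_{i,j}}} \quad \text{as } n\to\infty.
$$

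The computation proceeds in two stages. First, using $\chi_n(i,j)=\chi(A,B)+\chi(B,A)$ and the last-exit formula \eqref{last.passage}, each $\chi_n(i,j)$ is written as a double sum over range points weighted by two escape probabilities and one Green's function. Converting range sums to time-indexed sums with first-visit indicators, the $M$-fold product becomes a $2M$-fold sum over time-index pairs, with $2M$ escape factors and $M$ Green's functions. Proposition \ref{lem:inter} (and the analogous single-walk estimate for the factors $P_z(H^+_B = \infty)$) gives, pairwise for each first-visit/escape combination, the asymptotic $\pi^2 p_k(x)/(8\log n)$, uniformly in the spatial variable. Its quantitative error $\mathcal O(f_k(x/2)/(\log n)^{3/2})$, together with the $\epsilon$-terms from Proposition \ref{pro:deccap} and boundary indices of length $\mathcal O(n/(\log n)^\alpha)$, are of lower order and are handled in the spirit of Section \ref{sec:slln}.

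Second, combining the invariance principle $S_{\lfloor nt\rfloor}/\sqrt n \Rightarrow \tfrac12\beta_t$ for a standard four-dimensional Brownian motion with the local estimate $G_d(x)=4G(x)+\mathcal O(\|x\|^{-4})$ from \eqref{Green.bound}, a Riemann-sum argument yields jointly across the dyadic blocks
$$
\sum_{k_1\in I_\ell,\, k_2\in J_\ell} G_d(S_{k_1}-S_{k_2}) \;=\; 16\,n\int_{A_{i_\ell,j_\ell}} G(\beta_s,\beta_t)\,ds\,dt \;+\; o(n),
$$
with convergence in all moments. The passage to the limit is valid because the squares $A_{i,j}$ all lie strictly above the diagonal, so the $1/\|\cdot\|^2$ singularity of $G$ stays integrable, and Proposition \ref{pro:carleman} provides the uniform integrability needed for moment convergence. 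Combining the two stages, the constants collapse via $2^M\cdot(\pi^2/8)^{2M}\cdot 16^M = (\pi^4/2)^M$, giving precisely the normalization claimed in the statement.

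The principal technical hurdle is the joint factorization in the first stage: the $2M$ escape walks are mutually independent, but the associated escape events all depend on the single underlying walk $S$ and are therefore a priori correlated with each other and with the Green's-function factors. The spatial uniformity in Proposition \ref{lem:inter} — through the explicit prefactor $p_k(x)$ — is exactly what allows these correlations to be suppressed after summing over the spatial variables. Propagating its quantitative error bounds through the $M$-fold expansion uniformly in $\mathbf p$, and controlling contributions from atypical configurations (first-visit times near dyadic boundaries, or pairs of walk positions that come too close), is the main analytic work of the section.
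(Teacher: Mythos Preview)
Your overall architecture — Cram\'er--Wold plus Carleman, reducing to mixed moments, then Proposition~\ref{lem:inter} for the escape factors and a Donsker/Green's-function argument for the remaining sum — is exactly the paper's strategy, and your constant bookkeeping is correct. But there is a real gap in the ``joint factorization'' step, and the mechanism you name for suppressing the correlations is not the one that works.

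The difficulty you identify is precisely the crux: in the $M$-fold product, every escape event $\{(\RR_n^{(i,2j-1)}\cup\RR_n^{(i,2j)})\cap(S_k+\RR^m)=\emptyset\}$ depends on the entire dyadic block of the shared walk $S$, so for two different time indices $k$ and $k'$ these events involve overlapping pieces of $\RR_n$. Proposition~\ref{lem:inter} is a statement about three \emph{independent} walks; it does not apply once you condition on the event at $k'$. Spatial uniformity of the prefactor $p_k(x)$ does not help here — after you have conditioned on the range near time $k'$, the law of the walk near time $k$ is no longer that of a free walk, and Proposition~\ref{lem:inter} simply does not address this situation.

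The paper resolves this with an intermediate \emph{localisation} step (Lemma~\ref{lem:deloc}): each $\chi_n(i,j)$ is replaced, up to an $L^1$-error $o(n/(\log n)^2)$, by $2\chi_{n,\alpha}(i,j)$, in which every escape event is modified to depend only on the walk in a window $[k-n_{4\alpha},k+n_{4\alpha}]$ of length $n/(\log n)^{4\alpha}$ around its time index. For well-separated indices (the set $D_\beta$ in \eqref{eq:defD}) these windows are disjoint, so the localised escape events become conditionally independent given the walk positions at the window boundaries. One can then peel off the factors one at a time via the Markov property, applying Proposition~\ref{lem:inter} at each step; this is the iteration producing \eqref{eq:firstst}. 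The contribution from non-separated indices is killed by the counting bounds \eqref{eq:replace}--\eqref{eq:replace2}. Note also that the localisation only controls the error in $L^1$, not in higher moments; to get moment convergence for $\chi_n$ itself one sandwiches its mixed moments between those of $\chi_{n,\alpha}$ and $\chi_{n,\beta}$ for $\beta>\alpha$ via \eqref{eq:lowdalpha}--\eqref{eq:powersgoal}, rather than expanding $(\chi_{n,\alpha}+\epsilon)^M$ directly.
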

Our strategy for proving this proposition is first to localize in a certain sense all the $\chi_n(i,j)$. 
More precisely we prove that for any given $i$ and $j\le 2^{i-1}$, the term $\chi_n(i,j)$ can be written as a sum of two elements, one being 
a localised version of this cross term (the so-called $\chi_{n,\alpha}(i,j)$, see below), and the other one having a negligible expectation. 
So to prove the joint convergence 
in law of the cross terms, we are led to prove only the joint convergence in law of the $\chi_{n,\alpha}(i,j)$. 
To prove this in turn, we show the convergence of the joint moments, which is indeed sufficient thanks to the results of Section 
4.1 and Carleman's criterion (see Section 3.3.5 in \cite{Durrett}).

Now the $\chi_{n,\alpha}(i,j)$ have the great advantage that their joint moments 
reduce (after some tedious computation) to a product of non-intersection probabilities (whose asymptotics have been 
computed in the previous section) times a product of Green's function. 
Then a separate argument shows that this product of Green's functions converges to its continuous counterpart.   

Before digging into the proof, we gather some basic preliminary estimates in the next subsection. 

\subsection{Preliminaries}
We start with an elementary fact which directly follows from the local CLT \eqref{localCLT} and~\eqref{upper.heatkernel}: 
there is a constant $C>0$, such that for all $k\ge 0$,
\begin{equation}\label{GSK-1}
\E{G_d(S_k)}\ \le\  C \cdot \frac{1}{k+1} \quad \text{and}\quad \E{G_d(S_k)^2}\ \le\  C \cdot \frac{1 }{k^2+1}.
\end{equation}
Now for $\alpha>0$, recall that $n_\alpha = n/(\log n)^\alpha$, and define the event 
\begin{equation}
\label{Balpha}
B_\alpha=\{(x,y):\, \sqrt{n_{2\alpha}}\leq \|x\|,\|y\|\leq \sqrt{n} (\log n)^2, \|x-y\|\geq \sqrt{n_{2\alpha}}\}.
\end{equation}
\begin{lemma}\label{lem:gg}
Let $S$ and $\widetilde S$ be two independent random walks and let $\alpha>0$.
Then 
\begin{equation}\label{GSK-2}
\sum_{k=0}^{n} \sum_{\ell  =0}^{n_\alpha}\ 
\E{ G_d\big(S_k-\widetilde S_\ell\big)}\  \lesssim\   n_\alpha \cdot  \log n , 
\end{equation}
\begin{equation}\label{GSK-2bis}
\sum_{k=0}^{n}\sum_{\ell =n-n_\alpha}^{n} \ 
\E{ G_d\big(S_k-\widetilde S_\ell\big)}\  \lesssim\   n_\alpha, 
\end{equation}
and 
\begin{equation}\label{GSK-3}
\sum_{k,\ell=0}^{n}\sum_{(x,y)\notin B_\alpha } \bP( S_k=x, \, \widetilde S_\ell=y) \cdot 
G_d(x,y)\ \lesssim\  n_{2\alpha}\cdot (\log n)^2. 
\end{equation}
\end{lemma}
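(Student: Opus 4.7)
The starting observation for all three bounds is that, by independence and symmetry of the walk's increments, $S_k-\widetilde S_\ell$ has the same distribution as $S_{k+\ell}$. Hence $\E{G_d(S_k-\widetilde S_\ell)}=\E{G_d(S_{k+\ell})}$, and the first estimate in \eqref{GSK-1} yields
\[
\E{G_d(S_k-\widetilde S_\ell)}\ \lesssim\ \frac{1}{k+\ell+1}.
\]
With this in hand, \eqref{GSK-2} follows by summing: for fixed $\ell$, $\sum_{k=0}^n 1/(k+\ell+1)\lesssim \log n$, so $\sum_{\ell=0}^{n_\alpha}\sum_k \lesssim n_\alpha\log n$. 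For \eqref{GSK-2bis}, on the range $\ell\ge n-n_\alpha$ we have $k+\ell+1\gtrsim n$ uniformly, so every summand is $O(1/n)$, and there are $O(n\cdot n_\alpha)$ of them, giving $O(n_\alpha)$.

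For \eqref{GSK-3}, the plan is to partition $B_\alpha^c$ into the five sub-events corresponding to which defining constraint of $B_\alpha$ fails, and treat each separately.

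\textbf{Far-away cases} $\|x\|\ge\sqrt n(\log n)^2$ or $\|y\|\ge\sqrt n(\log n)^2$: use \eqref{upper.heatkernel} to get $\bP(\max_{k\le n}\|S_k\|\ge\sqrt n(\log n)^2)\le Ce^{-c(\log n)^4}$ and bound $G_d\le G_d(0)$; since the contribution is super-polynomially small in $n$, it is negligible compared to $n_{2\alpha}(\log n)^2$.

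\textbf{Inner ball case} $\|x\|<\sqrt{n_{2\alpha}}$ (and the symmetric case in $y$): rewrite as
\[
\sum_{k=0}^n\sum_{\|x\|<\sqrt{n_{2\alpha}}}p_k(x)\,\sum_{\ell=0}^n \E{G_d(x-\widetilde S_\ell)}.
\]
Using that $\sum_{\ell\ge 0}\E{G_d(x-\widetilde S_\ell)}=\sum_y G_d(x-y)G_d(y)$ truncated at time $n$, one obtains $\sum_{\ell=0}^n \E{G_d(x-\widetilde S_\ell)}\lesssim \log(n/(\|x\|^2\vee 1))$ from $p_m(x)\lesssim m^{-2}$ for $m\ge \|x\|^2$. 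Summing $\sum_{k=0}^n p_k(x)\le G_d(x)\lesssim \|x\|^{-2}$ in $k$ and converting to an integral gives
\[
\sum_{\|x\|<\sqrt{n_{2\alpha}}}\frac{\log(n/(\|x\|^2\vee 1))}{\|x\|^2+1}\ \lesssim\ n_{2\alpha}\log\log n,
\]
well within $n_{2\alpha}(\log n)^2$.

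\textbf{Close-pair case} $\|x-y\|<\sqrt{n_{2\alpha}}$: set $z=x-y$; the convolution identity $\sum_{x}p_k(x)p_\ell(x-z)=p_{k+\ell}(z)$ (again using $S_k-\widetilde S_\ell\stackrel{d}{=} S_{k+\ell}$) reduces the sum to $\sum_{\|z\|<\sqrt{n_{2\alpha}}}G_d(z)\sum_{k,\ell=0}^n p_{k+\ell}(z)$. Counting pairs $(k,\ell)$ with $k+\ell=m$ yields $\sum_{k,\ell=0}^n p_{k+\ell}(z)\lesssim\log(n/(\|z\|^2\vee 1))$ by the same $p_m(z)\lesssim m^{-2}$ argument. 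Integrating over $z$ as above produces again the bound $n_{2\alpha}\log\log n$.

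The only non-routine piece is the convolution estimate for the close-pair case, where one must make sure that the truncation at time $n$ converts the formally divergent $G_d\ast G_d$ into a $\log\log n$ factor; the rest is careful bookkeeping of tail bounds from \eqref{upper.heatkernel}, \eqref{localCLT}, and \eqref{GSK-1}.
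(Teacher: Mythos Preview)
Your proofs of \eqref{GSK-2} and \eqref{GSK-2bis} are the same as the paper's: reduce to $\E{G_d(S_{k+\ell})}\lesssim 1/(k+\ell+1)$ via \eqref{GSK-1} and sum.

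For \eqref{GSK-3} your argument is correct, but the paper takes a shorter route. Instead of splitting into cases and evaluating the spatial sums explicitly, the paper applies Cauchy--Schwarz together with the \emph{second} bound in \eqref{GSK-1} and the dispersion estimate \eqref{dispertion}. Concretely, for the event $\{\|S_k\|^2\le n_{2\alpha}\}$ (and analogously for $\|\widetilde S_\ell\|$ and $\|S_k-\widetilde S_\ell\|$),
\[
\E{G_d(S_k-\widetilde S_\ell)\,\1(\|S_k\|^2\le n_{2\alpha})}
\ \le\ \E{G_d(S_{k+\ell})^2}^{1/2}\,\bP(\|S_k\|^2\le n_{2\alpha})^{1/2}
\ \lesssim\ \frac{1}{1+k+\ell}\cdot\frac{n_{2\alpha}}{1+k},
\]
and the far-away events are handled by \eqref{upper.heatkernel}. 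Summing $\sum_{k,\ell}\frac{n_{2\alpha}}{(1+k+\ell)(1+k)}$ gives $n_{2\alpha}(\log n)^2$ directly. This avoids the spatial integrals and the convolution identity entirely. Your approach, on the other hand, actually yields the sharper bound $n_{2\alpha}\log\log n$ in the inner-ball and close-pair cases, at the cost of more computation; for the purposes of this lemma either is sufficient.
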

\begin{proof}[\bf Proof]
Note that $S_k-\widetilde S_\ell$ is equal in law to $S_{k+\ell}$. Thus 
by using \reff{GSK-1}, we deduce that for any~$k\ge 1$,
\begin{equation*}
\sum_{\ell=0}^{n_\alpha} \ \E{ G_d\big(S_{k+\ell}\big)}\ \lesssim\ \sum_{\ell=0}^{n_\alpha} \frac{1}{k+\ell}
\ \lesssim \ \frac{n_{\alpha}}{k}.
\end{equation*}
Summing over $k$ proves~\eqref{GSK-2}. The proof of~\eqref{GSK-2bis} is entirely similar.

We prove now \eqref{GSK-3}. Using \eqref{dispertion} yields 
$$\bP(\|S_k - \til{S}_\ell \|^2 \le n_{2\alpha})\, =\, \bP(\|S_{k+\ell} \|^2 \le n_{2\alpha})\, \lesssim\   \frac{n_{2\alpha}^2}{1+(k+\ell)^2}.$$
Similarly one has 
$$\bP(\|S_k\|^2 \le n_{2\alpha})\, \lesssim\   \frac{n_{2\alpha}^2}{1+k^2} \quad \text{and} \quad \bP(\|\til{S}_\ell\|^2 \le n_{2\alpha})\, \lesssim\   \frac{n_{2\alpha}^2}{1+\ell^2}.$$
Therefore by using \eqref{GSK-2} and Cauchy-Schwarz, we get 
\begin{equation*}
\mathbb E\big[ G_d(S_k , \til{S}_\ell) \1 ((S_k, \til{S}_\ell)\notin B_\alpha) \big] \ \lesssim\   \frac{n_{2\alpha}}{1+k+\ell} \left( \frac{1}{1+k} + \frac 1{1+\ell}\right).
\end{equation*}
The result follows by summing over $k$ and $\ell$, and using also \eqref{upper.heatkernel}.
\end{proof}

\begin{lemma}\label{lem:epsilonphi}
For all $i\geq 1$ there exists a constant $C>0$, such that for all $j\le 2^{i-1}$, one has 
$$\frac 1n\cdot  \E{\sum_{(k,\ell) \in A_{i,j}^n} \1(\|S_k-S_\ell \|\le \varepsilon \sqrt n) \cdot G_d(S_k,S_\ell) } \ \le \ C\cdot \varepsilon \log \frac 1\varepsilon.$$
\end{lemma}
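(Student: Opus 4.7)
By translation invariance of the walk, for $k \leq \ell$ one has
$$\mathbb{E}\bigl[\mathbf{1}(\|S_k - S_\ell\| \leq \varepsilon\sqrt{n}) G_d(S_k, S_\ell)\bigr] = \phi(\ell - k), \qquad \phi(m) := \mathbb{E}\bigl[\mathbf{1}(\|S_m\| \leq \varepsilon\sqrt{n}) G_d(S_m)\bigr].$$
Writing $L := n/2^i$ for the side length of $A_{i,j}^n$, I would group the pairs by their gap $m = \ell - k$ and use the elementary multiplicity bound $N(m) \leq \min(m+1,\, 2L - m + 1)$, valid for $m \in [0, 2L]$. This reduces the task to controlling $\sum_{m=0}^{2L} N(m)\, \phi(m)$.

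The heart of the argument is a two-regime estimate of $\phi(m)$, separated by $m^\star := \varepsilon^2 n$. For $m \leq m^\star$, the walk already satisfies the indicator with high probability, and I would simply discard it and invoke \eqref{GSK-1} to obtain $\phi(m) \lesssim 1/(m+1)$. For $m > m^\star$, the ball $\|x\| \leq \varepsilon\sqrt{n} \leq \sqrt{m}$ lies well inside the Gaussian bulk, so the local CLT \eqref{localCLT} yields $p_m(x) \lesssim 1/m^2$ uniformly; combined with the elementary four-dimensional bound $\sum_{\|x\| \leq R} G_d(x) \lesssim R^2$ (which follows from $G_d(x) \asymp 1/\|x\|^2$ and a polar-coordinate integral), this gives
$$\phi(m)\ \lesssim\ \frac{1}{m^2} \sum_{\|x\| \leq \varepsilon\sqrt{n}} G_d(x)\ \lesssim\ \frac{\varepsilon^2 n}{m^2}.$$

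Assuming $\varepsilon$ is small enough that $m^\star < L$ (the complementary range is trivially bounded by a constant depending on $i$, which is absorbed into $C$ since $\varepsilon \log(1/\varepsilon)$ stays bounded below there), I would split the sum into three ranges. On $[0, m^\star]$, using $N(m) \leq m+1$, the contribution is $\sum (m+1)(m+1)^{-1} \asymp \varepsilon^2 n$. On $[m^\star, L]$, using $N(m) \leq m+1$ together with $\phi(m) \lesssim \varepsilon^2 n/m^2$, the contribution is $\asymp \varepsilon^2 n\, \log(L/m^\star) \lesssim \varepsilon^2 n\, \log(1/\varepsilon)$. On $[L, 2L]$, using $N(m) \leq L$, the contribution is $\asymp \varepsilon^2 n$. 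Summing and dividing by $n$ yields a bound of order $\varepsilon^2 \log(1/\varepsilon)$, which is stronger than the claimed $\varepsilon \log(1/\varepsilon)$. The only thing that requires care is matching the multiplicity $N(m) \asymp m$ (for $m \leq L$) with the correct $\phi$-bound in each regime: this interplay is precisely what produces a single factor of $\log(1/\varepsilon)$ rather than a spurious $\log n$ from the small-$m$ tail.
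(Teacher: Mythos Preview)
Your argument is correct and follows essentially the same route as the paper: reduce to the gap $m=\ell-k$, bound the multiplicity by $\min(m+1,2L-m+1)$, and control $\phi(m)$ via heat-kernel estimates with a split at $m^\star=\varepsilon^2 n$. The only cosmetic difference is that the paper keeps the Gaussian factor and writes the $z$-integral as $k(1-e^{-\varepsilon^2 n/(2k)})$, which smoothly interpolates between your two regimes, whereas you discard the Gaussian in each regime separately; the resulting sums over $m$ are identical. Your observation that the argument actually yields the sharper bound $\varepsilon^2\log(1/\varepsilon)$ is correct (the paper's displayed $\sqrt{\varepsilon n}$ in place of $\varepsilon\sqrt n$ is a typo that accounts for the weaker exponent there).
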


\begin{proof}[\bf Proof]
By considering two independent random walks we get
\begin{align}\label{ob-4}
\begin{split}
\E{\sum_{(k,\ell) \in A_{i,j}^n} \1(\|S_k-S_\ell \|\le \varepsilon \sqrt n) \cdot G_d(S_k,S_\ell) } \lesssim \frac{1}{n}
\sum_{k=1}^n k \sum_{z:\|z\|\le \sqrt{ \epsilon n}}
p_k(z) \Gd(z)\\ 
\lesssim \frac{1}{n}\sum_{k=1}^n 
\sum_{z:\|z\|\le \sqrt{ \epsilon n}}
k\times \frac{\exp(-\|z\|^2/(2k))}{k^2} \Gd(z).
\end{split}
\end{align}
Summing over $z$ we now obtain
\[
\sum_{z:\|z\|\le \sqrt{ \epsilon n}} \exp(-\|z\|^2/(2k)) \Gd(z)\lesssim
\int_0^{\sqrt{ \epsilon n}} \frac{\exp(-r^2/(2k))}{r^2} r^3\ dr=
\ k(1-\exp(-\epsilon n/(2k))).
\]
Summing over $k$ we get
\begin{align*}
	\frac{1}{n}\sum_{k=1}^n  \big(1-\exp(-\epsilon n/2k)\big) \leq \epsilon  + \frac{1}{n}
\sum_{k=\epsilon n}^n\epsilon\frac{n}{2k}\lesssim \epsilon \log(1/\epsilon)
\end{align*}
and this concludes the proof.
\end{proof}

\begin{lemma}\label{lem:sumoff}
	We have
	\[
	\sum_{x,y\in \Z^d}\sum_{k\leq n}\sum_{\ell \leq n} f_k(x) f_\ell(y/2) \Gd(x,y) \lesssim n.
	\]
\end{lemma}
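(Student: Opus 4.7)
The plan is to carry out the sum in three nested steps, first over $x$, then over $y$, and finally over the time variables $k$ and $\ell$. The key intermediate estimates are
\begin{equation}\label{claim-pf-lem}
\sum_{x\in\Z^4} f_k(x)\, G_d(x,y)\, \lesssim\, \frac{1}{k+\|y\|^2}, \qquad
\sum_{y\in\Z^4} f_\ell(y/2)\cdot \frac{1}{k+\|y\|^2}\, \lesssim\, \frac{1}{k+\ell},
\end{equation}
holding uniformly in $y\in\Z^4$ and $k,\ell\ge 1$. Once these are in hand, the remaining sum is
$\sum_{k,\ell=1}^n (k+\ell)^{-1} \le \sum_{k=1}^n \log((k+n)/k) = \log \binom{2n}{n} = \mathcal{O}(n)$ by Stirling, which yields the desired bound.

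For the first estimate in \eqref{claim-pf-lem}, I use \eqref{Green.bound} to bound $G_d(x,y) \lesssim (1+\|x-y\|^2)^{-1}$ and replace the lattice sum by the corresponding integral over $\R^4$. If $\|y\|^2 \le k$, I rescale $x=\sqrt{k}\,u$ and split off the small ball $\{\|u-y/\sqrt k\| \le 1/\sqrt k\}$, whose contribution is $\mathcal{O}(1/k^2)$; on its complement, $(1+\|x-y\|^2)^{-1}\le 1/(k\|u-y/\sqrt k\|^2)$, and the resulting Gaussian-Riesz convolution $\int_{\R^4} e^{-2\|u\|^2}/\|u-y/\sqrt k\|^2\,du$ is bounded uniformly in $\|y/\sqrt k\| \le 1$, yielding $\mathcal{O}(1/k)$. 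If instead $\|y\|^2 > k$, I split the integral at $\|x-y\|=\|y\|/2$: on $\{\|x-y\|>\|y\|/2\}$ one has $(1+\|x-y\|^2)^{-1}\lesssim 1/\|y\|^2$ and $\int f_k = \mathcal{O}(1)$; on $\{\|x-y\|\le\|y\|/2\}$ one has $\|x\|\ge \|y\|/2$, so the Gaussian is at most $e^{-\|y\|^2/(2k)}$ and the remaining integration over a ball of radius $\|y\|/2$ produces a factor of order $\|y\|^2/k^2$, giving a contribution $\lesssim \|y\|^2 e^{-\|y\|^2/(2k)}/k^2 \lesssim 1/\|y\|^2$ thanks to the boundedness of $t^2 e^{-t}$ on $t\ge 1/2$. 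For the second estimate in \eqref{claim-pf-lem}, I approximate the sum by the integral $\int_{\R^4} \ell^{-2} e^{-\|y\|^2/(2\ell)}(k+\|y\|^2)^{-1}\,dy$, pass to polar coordinates, and substitute $u=r^2/\ell$ to reduce it (up to a constant) to $\int_0^\infty u\,e^{-u/2}/(k+u\ell)\,du$; the elementary bound $u/(k+u\ell)\le \min(u/k,\,1/\ell)$ then yields $\lesssim 1/\max(k,\ell) \lesssim 1/(k+\ell)$.

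The main obstacle is the first estimate in \eqref{claim-pf-lem}, which quantifies how the convolution of a Gaussian of scale $\sqrt k$ with the Green kernel interpolates between the regime $\|y\|^2\lesssim k$ (where the singularity of $G_d$ is regularised by the Gaussian and the answer is of order $1/k$) and the regime $\|y\|^2\gg k$ (where the Gaussian forces $G_d(\cdot,y)$ to look like $G(y)\asymp 1/\|y\|^2$). Once this bound is established, the remaining reductions are routine Gaussian integrals and the standard harmonic-sum bound for $\sum_{k,\ell\le n}(k+\ell)^{-1}$.
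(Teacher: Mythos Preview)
Your argument is correct and is precisely the computation the paper has in mind when it writes that the result ``follows immediately by substituting the expression for $f$ and using~\eqref{Green.bound}''. You have simply spelled out the routine Gaussian--Riesz convolution bounds that the paper leaves implicit; the two intermediate estimates in your display and the final harmonic double sum are all sound.
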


\begin{proof}[\bf Proof]

The proof follows immediately by substituting the expression for $f$ and using~\eqref{Green.bound}.
\end{proof}

\subsection{Localisation of one cross term}
\label{sec:deloc}

For any $n\ge 1$, $i\ge 1$ and $j\le 2^{i-1}$, define 
$$A_{i,j}^n\ :=\ \{(2j-2)2^{-i}n,\dots,(2j-1)2^{-i}n\}\times \{(2j-1)2^{-i}n,\dots,j2^{-i+1}n \},$$and for any $\alpha>0$, 
\begin{align*}
	A_{i,j}^{n,\alpha} :&= \{(2j-2)2^{-i}n+n_\alpha,\dots,(2j-1)2^{-i}n - n_\alpha\}\times  \{(2j-1)2^{-i}n+n_\alpha,\dots,j2^{-i+1}n - n_\alpha\}
	\end{align*}
at least for $n$ large enough, to make sense of this definition, and with the usual convention to take integer parts when needed.

\begin{lemma}\label{lem:deloc}
Let $i,j$ be positive integers with $j\le 2^{i-1}$. For all $\alpha>3$ we have 
\[
\chi_n(i,j)=
2\cdot \chi_{n,\alpha}(i,j) + \epsilon_{n,\alpha}(i,j),
\]
where $\E{\epsilon_{n,\alpha}(i,j)} = o(n/(\log n)^2)$ and 
\begin{align*}
\chi_{n,\alpha}(i,j) =&\sum_{(k,\ell) \in A_{i,j}^{n,\alpha}}\sum_{(x,y)\in B_\alpha}
\prcond{\RR[k-n_{4\alpha}, k+n_{4\alpha}]\cap (x+\RR^1)
=\emptyset, \, S_k=x\notin \RR[k-n_{4\alpha},k)}{S}{} \\
& \times \, \Gd(x,y)\cdot \prcond{\RR[\ell-n_{4\alpha}, \ell+n_{4\alpha}]\cap (y+\RR^2)
=\emptyset, \, S_\ell=y\notin \RR[\ell-n_{4\alpha},\ell)}{S}{},
\end{align*}
with $B_\alpha$ as in \eqref{Balpha} and $\RR^1$ and $\RR^2$ the ranges of two independent 
random walks starting from $0$ and where for simplicity we used the convention $\RR^1=\RR^1[1,\infty)$ and similarly for $\RR^2$.
\end{lemma}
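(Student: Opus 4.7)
The plan is to express $\chi_n(i,j) = \chi(A,B) + \chi(B,A)$ with $A = \RR_n^{(i,2j-1)}$ and $B = \RR_n^{(i,2j)}$, to rewrite each summand as a double sum over first-visit times in the time intervals $I_1, I_2$ corresponding to $A$ and $B$, to perform three localisations whose cumulative error has expectation $o(n/(\log n)^2)$, and finally to combine the two localised summands into $2\,\chi_{n,\alpha}(i,j)$ via a symmetry argument. Starting from the defining formula for $\chi(A,B)$, I will convert the spatial sum $\sum_{y \in A}$ into $\sum_{k \in I_1}\1(S_k\text{ is the first visit of its value within }I_1)$, and similarly for $\sum_{z\in B}$, which leads to an expression whose summand splits, up to symmetry, as a product of first-visit indicators, escape probabilities, and the single factor $G_d(S_k, S_\ell)$.

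The first two localisations are mild. Restricting $(k,\ell)$ to $A_{i,j}^{n,\alpha}$ trims a boundary layer of width $n_\alpha$ at the endpoints of $I_1$ and $I_2$; the trivial bound that each escape probability is at most $1$, together with \eqref{GSK-2} and \eqref{GSK-2bis}, gives an expected error of order $n_\alpha \log n = o(n/(\log n)^2)$ for $\alpha > 3$. Restricting further to $(S_k, S_\ell) \in B_\alpha$ discards a mass bounded by $n_{2\alpha}(\log n)^2 = o(n/(\log n)^2)$ via \eqref{GSK-3}. Both contributions are absorbed into $\epsilon_{n,\alpha}(i,j)$.

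The main obstacle is the third localisation: replacing the global first-visit indicator $\1(S_k \notin \RR[0,k))$ and the global escape probability $\bP_{S_k}(H_{A\cup B}^+ = \infty \mid S)$ by their counterparts relative to the window $W_k := [k - n_{4\alpha}, k + n_{4\alpha}]$, and analogously at $\ell$. Under the restrictions already in force, the windows $W_k \subset I_1$ and $W_\ell \subset I_2$ are disjoint, so the local part of $A \cup B$ around $k$ coincides with the local part of $A$, and likewise at $\ell$. The first-visit discrepancy, $\{S_k \notin \RR[k - n_{4\alpha}, k),\, S_k \in \RR[0, k - n_{4\alpha})\}$, forces a return after a long absence and can be handled by Claim~\ref{cl:hitxthen0}. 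The escape-probability discrepancy is more delicate: conditional on $S$, I need to bound the probability that an independent walk started at $S_k$ avoids $\RR \cap W_k$ but still meets $\RR \setminus W_k$. The plan is to condition this independent walk at its first exit from a ball of radius $\sqrt{n_{4\alpha}}\,(\log n)^C$ around $S_k$, combine the hitting estimate \eqref{hit.ball} with the two-point non-intersection bound of Lemma~\ref{lem-2-inter}, and finally sum the resulting error against $G_d(S_k, S_\ell)$ using \eqref{GSK-2}.

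Once these localisations have been carried out, the bracketed quantities coming from $\chi(A,B)$ and from $\chi(B,A)$ collapse to the \emph{same} expression, because the local escape events no longer feel whether the ambient range is $A$, $B$ or $A \cup B$. Hence both halves contribute identically, producing the prefactor $2$ in front of $\chi_{n,\alpha}(i,j)$ and completing the proof.
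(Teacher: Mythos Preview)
Your overall architecture is sound: trim the boundary in time via \eqref{GSK-2}--\eqref{GSK-2bis}, trim atypical spatial positions via \eqref{GSK-3}, localise the escape events to the windows $W_k,W_\ell$, and then observe that after localisation the contributions of $\chi(A,B)$ and $\chi(B,A)$ coincide, yielding the factor $2$. The first two steps and the symmetry argument are correct.

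The genuine gap is the third localisation. The target is $o(n/(\log n)^2)$, while $\sum_{(k,\ell)\in A_{i,j}^{n,\alpha}} \E{G_d(S_k,S_\ell)}$ is of order $n$; hence you must show that the \emph{product} of the discrepancy probability at $k$ (local escape but not global escape) and the escape probability at $\ell$ is $o((\log n)^{-2})$ after averaging. Neither factor alone is that small: the discrepancy is only $O((\log\log n)/(\log n)^{3/2})$, and the escape at $\ell$ is only $O(1/\log n)$. Obtaining both of these requires the precise conditional non-intersection asymptotics of Proposition~\ref{lem:inter}, the decoupling Lemma~\ref{lem:bruno}, and Lawler's one-sided bound $\pr{\RR[0,n]\cap \RR'[1,m]=\emptyset}\asymp 1/\sqrt{\log n}$ (\cite[Theorem~4.4.1]{Law91}); you invoke none of these. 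The tools you do cite do not fit: Lemma~\ref{lem-2-inter} bounds the probability that two walks \emph{both hit} a range, not a non-intersection event; \eqref{hit.ball} controls hitting a ball, but $\RR_n\setminus W_k$ is a long range that may pass arbitrarily close to $S_k$, so conditioning on the exit from a ball around $S_k$ does not separate ``avoid $W_k$'' from ``hit the complement''; and \eqref{GSK-2} concerns sums over a \emph{short} time interval, not the full $A_{i,j}^{n,\alpha}$, so it cannot close the final summation.

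The paper's route is in fact two-staged. Lemma~\ref{lem:firstsum} first replaces the escape from $A\cup B$ at $k$ by escape from $A$ alone (this is where the factor~$2$ actually appears), by showing that the event ``the independent walk avoids $A$ but hits $B$'' contributes $o(n/(\log n)^2)$; this step already needs Proposition~\ref{lem:inter}, Lemma~\ref{lem:bruno} and \cite[Theorem~4.4.1]{Law91}. Only then is the escape window shrunk from all of $A$ to $[k-n_{4\alpha},k+n_{4\alpha}]$, via the same machinery (the bound \eqref{eq:goalb1}). Your single-step localisation would have to accomplish both reductions simultaneously, and the sketch gives no mechanism for the first one.
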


In order to prove the lemma above, we first approximate $\chi_n(i,j)$ by
an expression without localisation which we call $\widetilde \chi_{n,\alpha}(i,j)$, and which decorrelates the two parts of the range in some sense.

\begin{lemma}\label{lem:firstsum}
With the same notation as in Lemma~\ref{lem:deloc}, we have
\[
\chi_n(i,j)= 2\cdot 
\til{\chi}_{n,\alpha}(i,j)+\til{\epsilon}_{n,\alpha}(i,j),
\]
where $\E{\widetilde \epsilon_{n,\alpha}(i,j)}=o(n/(\log n)^2)$ and 
\begin{align*}
\til{\chi}_{n,\alpha}(i,j)=& \sum_{\substack{(k,\ell)\in A_{i,j}^{n,\alpha} \\ (x,y)\in B_\alpha}} \prcond{\RR[(2j-1)2^{-i}n,j 2^{-i}n] \cap (y+\RR^2)
=\emptyset, \, S_\ell=y\notin \RR[(2j-1)2^{-i}n,\ell)}{S}{}
\\
 \times \, \Gd(x,y)&\cdot \prcond{\RR[(2j-2)2^{-i}n,(2j-1)2^{-i}n]\cap (x+\RR^1)
=\emptyset, S_k=x\notin \RR[(2j-2)2^{-i}n,k)}{S}{}. 
\end{align*}
\end{lemma}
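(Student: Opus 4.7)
Write $\chi_n(i,j) = \chi(A,B) + \chi(B,A)$ with $A = \RR_n^{(i,2j-1)}$, $B = \RR_n^{(i,2j)}$, and $I_A, I_B$ the corresponding time intervals. By the very definition of $\chi$ in Proposition~\ref{pro:deccap}, together with the last passage formula~\eqref{last.passage} applied to $\bP_y(H_B<\infty)$, we have
\[
\chi(A,B) \;=\; \sum_{y \in A}\sum_{z \in B} \bP_y(H^+_{A\cup B}=\infty)\, \Gd(y,z)\, \bP_z(H^+_B=\infty),
\]
and an analogous expression for $\chi(B,A)$. Writing $\sum_{y \in A} = \sum_{k \in I_A} \1(S_k \text{ first visit in } A)$ (and similarly for $z \in B$) expresses $\chi(A,B)$ as a double sum indexed by $(k,\ell)\in I_A\times I_B$ and positions $(x,y) = (S_k,S_\ell)$, with first-visit indicators.

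The key approximation is then to replace $\bP_{S_k}(H^+_{A\cup B}=\infty)$ by $\bP_{S_k}(H^+_A=\infty) = \bP((S_k+\RR^1[1,\infty))\cap A = \emptyset\mid S)$ using an independent walk $\RR^1$ -- i.e.\ to drop the $B$-avoidance requirement -- and to restrict the sum to $(k,\ell)\in A_{i,j}^{n,\alpha}$ and $(x,y)\in B_\alpha$. This produces exactly one copy of $\widetilde\chi_{n,\alpha}(i,j)$ out of $\chi(A,B)$. The symmetric approximation applied to $\chi(B,A)$ (dropping $A$-avoidance instead) yields, via the symmetry $\Gd(x,y) = \Gd(y,x)$, the very same expression $\widetilde\chi_{n,\alpha}(i,j)$; this accounts for the factor $2$ in the statement.

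It remains to show $\mathbb E[\widetilde\epsilon_{n,\alpha}(i,j)] = o(n/(\log n)^2)$. The error splits into three pieces: (a) the terms with $(k,\ell)\notin A_{i,j}^{n,\alpha}$; (b) the terms with $(S_k,S_\ell)\notin B_\alpha$; (c) the decorrelation defect, bounded by $\bP_{S_k}(H^+_A=\infty,\,H_B<\infty)\le \bP_{S_k}(H_B<\infty)$. Items (a) and (b) are routine: bounding the escape probabilities trivially by $1$, Lemma~\ref{lem:gg} and~\eqref{GSK-3} yield $O(n_\alpha \log n)$ and $O(n_{2\alpha}(\log n)^2)$ respectively, both $o(n/(\log n)^2)$ for $\alpha>3$.

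The hard part will be (c). Heuristically, for $S_k$ deep inside $I_A$, the event that $\RR^1$ from $S_k$ avoids $A$ is local near $S_k$, while the event that it hits the distant set $B$ is a long-range event, so the two should decouple; each has probability of order $1/\log n$ (at the scale of $\cp(\RR_n)/n$), which gives $\bP_{S_k}(H^+_A=\infty,\,H_B<\infty) = O(1/(\log n)^2)$ on typical configurations. Combined with the estimate $\sum_\ell \mathbb E[\Gd(S_k,S_\ell)\bP_{S_\ell}(H^+_B=\infty)] = O(1/\log n)$ (from Lemma~\ref{lem:gg} and a trivial escape bound), and summing over $k$, this yields a total error of order $n/(\log n)^3 = o(n/(\log n)^2)$. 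To make this rigorous, we plan to use the Markov property of $S$ at the midpoint $(2j-1)2^{-i}n$ in order to decouple the pieces of $S$ generating $A$ and $B$, re-express the relevant quantity as a three-walk non-intersection probability, and estimate it by combining Lawler's non-intersection bound (Theorem~\ref{thm:lawler}) with the intersection estimate of Lemma~\ref{lem-2-inter}, exactly in the spirit of the first-moment argument in the proof of Lemma~\ref{lem:firstsecondchi11}.
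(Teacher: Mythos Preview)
Your overall structure is correct and matches the paper: the reduction to two independent ranges via translation at the midpoint, the symmetry giving the factor $2$, the restriction to $A_{i,j}^{n,\alpha}$ and $B_\alpha$ via Lemma~\ref{lem:gg}, and the identification of the ``decorrelation defect'' as the event that the auxiliary walk $\RR^1$ avoids $A$ but hits $B$ --- all of this is exactly how the paper proceeds. Parts (a) and (b) are fine.

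The gap is in part (c). You need to show that
\[
\sum_{k}\, \mathbb{E}\Big[\bP_{S_k}\big(H^+_A=\infty,\ H_B<\infty\big)\cdot \bP_{S_k}\big(H_B<\infty\big)\Big]
\ =\ o\!\left(\frac{n}{(\log n)^2}\right),
\]
but the argument you sketch does not deliver this. First, the two factors are not independent (they share $S_k$ and $B$), so you cannot multiply individual expectation bounds. Second, and more seriously, the tools you cite are too weak. If you drop the non-intersection event and keep only the two hittings of $B$ (one by $\RR^1$, one by $\RR^2$), Lemma~\ref{lem-2-inter} gives $O(n(\log\log n)^2/(\log n)^2)$; if instead you drop the $\RR^1$-hitting-$B$ event and keep non-intersection plus one hitting, the first-moment computation of Lemma~\ref{lem:firstsecondchi11} gives $O(n\log\log n/(\log n)^2)$. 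Neither is $o(n/(\log n)^2)$. To get an extra power of $\log n$ you must retain all three events simultaneously, and then the obstacle is that the non-intersection event $\{\RR^1\text{ avoids }A\}$ and the intersection event $\{\RR^1\text{ hits }B\}$ share the same walk $\RR^1$; Theorem~\ref{thm:lawler} and Lemma~\ref{lem-2-inter} do not decouple them.

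The paper resolves this by introducing an intermediate time scale $i=n_\beta$ (with $\beta=10\alpha+4$), splitting $\RR^1$ into $\RR^1[1,i]$ and $\RR^1[i,\infty)$, and arguing that the non-intersection with $A$ is essentially governed by the short piece while the hitting of the distant set $B$ is governed by the long piece; conditioning on $S^1(i)$ then decouples. The actual bounds require the pointwise estimates of Lemma~\ref{lem:bruno} and Proposition~\ref{lem:inter} (not just the averaged Lawler estimates), and yield an error of order $n/(\log n)^{7/3}$. Your plan points in the right direction, but the localisation step and the use of Lemma~\ref{lem:bruno} and Proposition~\ref{lem:inter} are the essential missing ingredients. (Also, the claim that $\sum_\ell \mathbb{E}[\Gd(S_k,S_\ell)\bP_{S_\ell}(H^+_B=\infty)]=O(1/\log n)$ does not follow from Lemma~\ref{lem:gg} with a trivial escape bound; it is true because this sum equals $\mathbb{E}[\bP_{S_k}(H_B<\infty)]$ by the last passage formula, and then Lawler's Theorem~4.3.3 applies.)
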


\begin{proof}[\bf Proof of Lemma~\ref{lem:firstsum}]
 
Since $i$ and~$j$ are going to
be kept fixed while $n$ will tend to infinity, we will not lose generality in doing the proof for $i=0$ and $j=1$. 
Also by moving the origin to $S(n)$, and looking at the range $\RR[0,n]$ backwards, 
one is led to consider $\chi(\RR_n,\widetilde \RR_n) + \chi(\til{\RR}_n,\RR_n)$, with 
$\RR_n$ and $\til{\RR}_n$ two independent ranges.  So it suffices to treat the term~$\chi(\RR_n,\til{\RR}_n)$. 

By the independence of $\RR^1$ and $\RR^2$ we get
\begin{align*}
	\chi(\RR_n,\widetilde \RR_n)= \sum_{k,\ell=0}^{n}\sum_{x,y}\Gd(x,y) & \cdot\mathbb{P}((\RR_n\cup \til{\RR}_n)\cap (x+\RR^1) = \emptyset, \, S_k=x\notin \RR[0,k)\mid S,\til{S}) \\ 
	&\times \bP(\til{\RR}_n\cap (y+\RR^2)=\emptyset,\, \til{S}_\ell=y\notin \til{\RR}[0,\ell)\mid \til{S}).
\end{align*}
Lemma~\ref{lem:gg} shows that we can restrict the sum over $n_\alpha\leq k, \ell \leq n-n_\alpha$ and
$(x,y) \in B_\alpha$
at a cost of at most~$n_\alpha \log n$ in expectation. The probability term appearing above is equal to 
\begin{align*}
&\prcond{\RR_n\cap (x+\RR^1) = \emptyset, S_k=x\notin \RR[0,k)}{S}{}\cdot \prcond{\til{\RR}_n\cap (y+\RR^2)=\emptyset, \til{S}_\ell=y\notin \til{\RR}[0,\ell)}{\til{S}}{} \\
&\quad - \mathbb{P}\left(\RR_n\cap (x+\RR^1) = \emptyset, \til{\RR}_n\cap (y+\RR^2)=\emptyset, \til{\RR}_n\cap(x+\RR^1)\neq \emptyset, \right.\\ 
&\quad \quad \quad\quad \quad \quad \quad \quad \quad \hspace{5cm} \left. S_k=x\notin \RR[0,k),\til{S}_\ell=y\notin \til{\RR}[0,\ell)\vert\, S,\til{S}\right).
\end{align*}
The first term is equal to the probability term in the expression of $\widetilde \chi_{n,\alpha}$. 
So we now turn to the second term. On the event $\{S_k=x\}$, by moving the origin to point $x$ and reversing time we can write
\[
\{\RR_n\cap (x+\RR^1)= \emptyset\}\ =\ \{(\RR^3[0,k]\cup \RR^4[0,n-k]) \cap \RR^1= \emptyset\},
\]
where $\RR^3$ and $\RR^4$ are the ranges of two independent walks starting from $0$. Applying the same to~$\til{\RR}$ we get that the event under consideration is contained in the intersection of the following events
\begin{align}\label{eq:starequation}
\begin{split}
	&\{(\RR^3[0,k]\cup \RR^4[0,n-k]) \cap \RR^1=\emptyset, \,S^3(k)=-x\}\\
	&\{(\til{\RR}^3[0,\ell]\cup \til{\RR}^4[0,n-\ell]) \cap \RR^2=\emptyset, \, \til{S}^3(\ell)=-y\}\\
	&\{(\til{\RR}^3[0,\ell]\cup \til{\RR}^4[0,n-\ell]) \cap (x-y+\RR^1)\neq \emptyset, \, \til{S}^3(\ell)=-y\}.
\end{split}	
\end{align}
Setting $i=n_\beta$ with~$\beta =10\alpha+4$ we next define
\begin{align*}
	A_1 &= \{ (\RR^3[0,k]\cup \RR^4[0,n-k]) \cap \RR^1[0,i]=\emptyset, S^3(k)=-x\}\\
	A_2&=  \{\til{\RR}^3[0,\ell]\cap \RR^2=\emptyset, \til{S}^3(\ell)=-y\}\\
	A_3&=\{\til{\RR}^4[0,n-\ell] \cap \RR^2=\emptyset\}\\
	A_4&=\{\til{\RR}^3[0,\ell] \cap (x-y+\RR^1[0,i])\neq \emptyset, \til{S}^3(\ell)=-y\}\\
	A_5&=\{\til{\RR}^3[0,\ell] \cap (x-y+\RR^1[i,\infty))\neq \emptyset, \til{S}^3(\ell)=-y\}\\
	A_6&=\{\til{\RR}^4[0,n-\ell]) \cap (x-y+\RR^1[0,i])\neq \emptyset\}\\
	A_7&=\{\til{\RR}^4[0,n-\ell]) \cap (x-y+\RR^1[i,\infty))\neq \emptyset\}.
\end{align*}
The first event in \eqref{eq:starequation} is contained in $A_1$; the second one is contained in $A_2\cap A_3$, and the third one 
is contained in the union of $A_4$, $A_5$, $A_6$ and $A_7$. Therefore we get that the probability of the intersection of the events appearing in~\eqref{eq:starequation} is upper bounded by 
\begin{align}\label{eq:sumofprob}
	\pr{A_1, A_2,A_6} + \pr{A_1, A_2, A_7}+\pr{A_1, A_3, A_4 } + \pr{A_1, A_3, A_5}.
\end{align}
The first step now is to decorrelate the events where the range $\RR^1$ appears. Lemma~\ref{lem:bruno} gives 
\begin{align}\label{eq:intprob}
\begin{split}
	&\bP(A_4) =\pr{\til{\RR}^3[0,\ell] \cap (x-y+\RR^1[0,i])\neq \emptyset, \til{S}^3(\ell)=-y}
	\lesssim f_{\ell}(y/2) \times\\ &\max\left(\pr{\til{\RR}^3[0,\ell] \cap (x-y+\RR^1[0,i])\neq \emptyset}, \prstart{\til{\RR}^3[0,\ell] \cap (x-y+\RR^1[0,i])\neq \emptyset}{-y,0}\right).
\end{split}
\end{align}
Defining the event $D=\{\|S_1(r)\|\leq \sqrt{i}(\log n)^{\alpha +2}, \, \forall\, r\leq i\}$ we get from \eqref{upper.heatkernel} 
\begin{align}\label{eq:ddecay}
\pr{D^c}\ \lesssim\  \exp\left(-(\log n)^{2\alpha +4} \right).
\end{align}
On the event $D$, in order for  $\til{\RR}^3[0,\ell]$ and $x-y+\RR^1[0,i]$ to intersect,~$\til{S}^3$ must hit a ball centred at $x-y$ of radius $\sqrt{i}(\log n)^2$ or a ball centred at $x$ of the same radius (depending on whether we start from~$0$ or~$-y$).
Since $\|x-y\|\geq \sqrt{n_{2\alpha}}$ and $\|x\|\geq \sqrt{n_{2\alpha}}$,  
using~\eqref{hit.ball} we get 
\begin{align}\label{eq:logbound}
\pr{\til{\RR}^3[0,\ell] \cap (x-y+\RR^1[0,i])\neq \emptyset}\vee \prstart{\til{\RR}^3[0,\ell] \cap (x-y+\RR^1[0,i])\neq \emptyset}{-y,0}\lesssim \frac{1}{(\log n)^4},
\end{align}
where the last inequality follows from the choice of $\beta$ (recall that $i=n_\beta$ with $\beta=10\alpha +4$). 

Using~\eqref{eq:intprob}, \eqref{eq:logbound} and the independence between $\til{\RR}^3$ and $\til{\RR}^4$ we get
\begin{align*}
	\pr{A_1, A_3, A_4} \lesssim \frac{1}{(\log n)^4}\cdot p_k(x) f_\ell(y/2)\cdot\pr{A_3}\leq\frac{1}{(\log n)^4}\cdot p_k(x) f_\ell(y/2).
\end{align*}
Similarly we get the same upper bound for $\pr{A_1, A_2,A_6}$. It remains to bound the probabilities $\pr{A_1,A_2,A_7}$ and $\pr{A_1, A_3, A_5}$. By the independence between the walks again we get 
\[
\pr{A_1, A_2, A_7} = \pr{A_2} \pr{A_1, A_7} \quad \text{ and } \quad \pr{A_1, A_3, A_5} = \pr{A_3} \pr{A_1, A_5}.
\]
For the probability of the event $A_2$ by exactly the same proof as in Lemma~\ref{lem:leading} we have for a suitable $\gamma>0$
\begin{align*}
	\pr{A_2} \ \leq \ \pr{\til{\RR}^3\left[0,\frac{\ell}{(\log n)^\gamma}\right]\cap \RR^2 = \emptyset,\, \til{S}_3(\ell) = -y}\ \lesssim\   p_\ell(y) \cdot \frac{1}{\sqrt{\log n}},
\end{align*}
where in the last inequality we also used~\cite[Theorem~4.4.1]{Law91}.
By~\cite[Theorem~4.4.1]{Law91} again we get
\[
\pr{A_3}\lesssim \frac{1}{\sqrt{\log n}}.
\]
We now upper bound the probability $\pr{A_1, A_5}$. The probability $\pr{A_1, A_7}$ can be bounded using similar ideas. Recall the definition of the event $D$ above. Then from~\eqref{eq:ddecay} and the independence between the two walks we get
\[
\pr{D^c, S^3(k)=-x, \til{S}^3(\ell)=-y}\leq \frac{1}{(\log n)^4} p_k(x) p_\ell(y),
\]
since for the range of $x$ and $k$ that we are looking at we have $p_k(x) \gtrsim \exp(-2(\log n)^{\alpha+4})$.
Conditioning on $S^1(i)$, the events $A_1$ and $A_5$ become independent, and hence we obtain
\begin{align}\label{eq:a1a5}
	\pr{A_1, A_5, D} \leq  \sum_{\|z\|\leq \sqrt{i}(\log n)^{\alpha+2}} \prcond{A_1}{S^1(i)=z}{} \pr{S^1(i)=z} \prcond{A_5}{S^1(i)=z}{}.
\end{align}
From Lemma~\ref{lem:bruno} again and~\cite[Theorem~4.3.3]{Law91} we obtain for all $z$ with $\|z\|\leq \sqrt{i}(\log n)^{\alpha+2}$
\begin{align*}
	\prcond{A_5}{S^1(i)=z}{} \lesssim \frac{\log \log n}{\log n} \cdot f_\ell(y/2).
\end{align*}
Plugging this into~\eqref{eq:a1a5} gives
\begin{align*}
	\pr{A_1,A_5,D} \lesssim f_\ell(y/2) \frac{\log \log n}{\log n} \pr{A_1}\lesssim f_\ell(y/2) p_k(x)\frac{\log \log n}{(\log n)^2} + \mathcal{O}\left( \frac{\log \log n}{(\log n)^{5/2}}\cdot f_k(x/2)\cdot f_\ell(y/2)  \right),
\end{align*}
where for the last inequality we used Proposition~\ref{lem:inter}. 
Therefore we conclude that the sum of probabilities appearing in~\eqref{eq:sumofprob} is upper bounded by 
\begin{align*}
	\frac{1}{(\log n)^{7/3}}\cdot (p_k(x) p_\ell(y)+ p_k(x) f_\ell(y/2) + f_k(x/2)f_\ell(y/2)).
\end{align*}
Taking the sum over all $k,\ell$ and $x,y$ and applying Lemma~\ref{lem:sumoff} completes the proof.
\end{proof}

\begin{proof}[\bf Proof of Lemma~\ref{lem:deloc}]

Using Lemma~\ref{lem:firstsum} it suffices to prove that for all $i,j\in \N$ we have
\[
\til{\chi}_{n,\alpha}(i,j) = \chi_{n,\alpha}(i,j) + \epsilon_{n,\alpha}(i,j),
\]
where $\E{\epsilon_{n,\alpha}(i,j)}=o(n/(\log n)^2)$. As in the proof of the previous lemma we only prove the result for $i=0$ and $j=1$, and by using reversibility of the walk, we are led to consider two independent ranges $\RR_n$ and $\widetilde \RR_n$ between times $0$ and $n$. 

We now define
\begin{align*}
	H= \left\{\RR_n\cap (x+ \RR^1)= \emptyset, S_k=x\notin \RR [0,k) \right\}.
\end{align*}
Then we have that $H=H_1\cap H_2$, where
\begin{align*}
	H_1&=\{ \RR[k-n_{4\alpha},k+n_{4\alpha}]\cap (x+\RR^1)=\emptyset, S_k=x\notin \RR[k-n_{4\alpha},k) \} \\
	H_2&=\{ (\RR[0,k-n_{4\alpha}]\cup \RR[k+n_{4\alpha},n])\cap (x+\RR^1)=\emptyset, x\notin \RR[0,k-n_{4\alpha})\}.
\end{align*}
Since $\pr{H} = \pr{H_1}+\pr{H_1\cap H_2^c}$, using Lemma~\ref{lem:sumoff} it suffices to prove that for all~$x$ and $k$ satisfying $\sqrt{n_{2\alpha}}\leq\norm{x}\leq \sqrt{n}(\log n)^2$ and $n_\alpha \le k\le  n-n_\alpha$, we have 
\begin{align}\label{eq:goalb1}
	\pr{H_1\cap H_2^c} \lesssim p_k(x) \cdot \frac{\log \log n}{(\log n)^{3/2}} + \mathcal{O}\left(f_k(x/2)/(\log n)^{3/2}	\right).
\end{align}
So we now turn to prove~\eqref{eq:goalb1}. We first note that $H_1\cap H_2^c\subseteq F_1\cup F_2 \cup F_3$, where
\begin{align*}
	F_1 &= \{\RR[0,k-n_{4\alpha}]\cap (x+\RR^1)\neq \emptyset, \, \RR[k, k+n_{4\alpha}]\cap (x+\RR^1)=\emptyset, \, S_k=x\}\\
	F_2&=\left\{ \RR[k+n_{4\alpha}, n] \cap (x+\RR^1)\neq \emptyset,\, \RR[k-n_{4\alpha},k]\cap (x+\RR^1)=\emptyset,\,  S_k=x\right\}\\
	F_3&= \left\{S_k=x\in \RR[0,k-n_{4\alpha}]\right\}.
\end{align*}
We start by proving the upper bound of~\eqref{eq:goalb1} for $\pr{F_1}$. The probability $\pr{F_2}$ can be treated in exactly the same way. 

First we decorrelate the two events appearing in $F_1$, by conditioning on $S_k=x$ and also by considering $\RR^1$ in separate time intervals just like we did in the proof of Lemma~\ref{lem:firstsum}. Let $i=n_{10\alpha+4}$. Then subtracting $x$ and reversing time we obtain $F_1\subseteq F_1(1)\cup F_1(2)$, where 
\begin{align*}
	F_1(1)&= \{\RR^3[n_{4\alpha},k]\cap \RR^1[1,i]\neq \emptyset, \, S^3(k)=-x \}\\
	F_1(2)&=\{\RR^3[n_{4\alpha},k]\cap \RR^1[i,\infty)\neq \emptyset, \RR^4[0,n_{4\alpha}]\cap \RR^1[1,i]=\emptyset, S^3(k)=-x \}, 
\end{align*}
and $\RR^3$ and $\RR^4$ are two independent ranges.
From Lemma~\ref{lem:bruno}
we get 
\begin{align*}
\bP(F_1(1)) \lesssim  \,f_k(x/2) \cdot\max(\prstart{\RR^3[n_{4\alpha},k]\cap \RR^1[1,i]\neq \emptyset}{-x,0},\prstart{\RR^3[n_{4\alpha},k]\cap \RR^1[1,i]\neq \emptyset}{}).
\end{align*}
Just like in the proof of Lemma~\ref{lem:firstsum} we define the event $D=\{\norm{S^1(r)}\leq \sqrt{i}(\log n)^2, \ \forall r\leq i\}$. Then on the event $D$ in order for $\RR^3$ and $\RR^1[1,i]$ to intersect, the range $\RR^3$ must hit the ball centered at $0$ of radius $\sqrt{i} (\log n)^2$. By the choice of $x$, this now gives us
\begin{align*}
\max(\prstart{\RR^3[n_{4\alpha},k]\cap \RR^1[1,i]\neq \emptyset}{-x,0},\prstart{\RR^3[n_{4\alpha},k]\cap \RR^1[1,i]\neq \emptyset}{})\lesssim \frac{1}{(\log n)^4},
\end{align*}
and hence
\[
\pr{F_1(1)} \lesssim f_k(x/2)\cdot \frac{1}{(\log n)^4}.
\]
We now turn to bound $\pr{F_1(2)}$. Clearly $\pr{F_1(2), D^c} \leq p_k(x)/(\log n)^4$. Now on the event $D$ conditioning on $S^1(i)$ we get
\begin{align*}
	\pr{F_1(2),D}\leq  \sum_{\norm{z}\leq \sqrt{i}(\log n)^2} \prcond{\RR^3[n_{4\alpha},k]\cap \RR^1[i,\infty)\neq \emptyset,\,  S^3(k)=-x}{S^1(i)=z}{}\\ \times \pr{\RR^4[0,n_{4\alpha}]\cap \RR^1[1,i]=\emptyset, S^1(i)=z}.
\end{align*}
From Lemma~\ref{lem:bruno} again we obtain 
\begin{align*}
	\prcond{\RR^3[n_{4\alpha},k]\cap \RR^1[i,\infty)\neq \emptyset, S^3(k)=-x}{S^1(i)=z}{}\lesssim \frac{\log \log n}{\log n} \cdot f_k(x/2).
\end{align*}
Plugging this above we finally deduce
\begin{align*}
	\pr{F_1(2),D} \leq \frac{\log \log n}{\log n} \cdot f_k(x/2) \cdot \frac{1}{\sqrt{\log n}},
	\end{align*}
where the last estimate follows from~\cite[Theorem~4.4.1]{Law91}. 

To finish the proof we only need to upper bound $\pr{F_3}$. By reversing time again we obtain
\[
\pr{F_3}\leq \pr{S_k=-x, n_{4\alpha}<H_0<k}\lesssim f_k(x/2)\cdot \frac{(\log n)^{2\alpha}}{n},
\]
where we recall that $H_0$ stands for the first hitting time of $0$ and the last inequality follows from Claim~\ref{cl:hitxthen0}.
Therefore, putting all these bounds together proves~\eqref{eq:goalb1} and this now completes the proof.
\end{proof}

\subsection{Moments of $\chi$}\label{sec:moments}
In this subsection we prove the following result  
\begin{lemma}\label{lem:moments}
Let $r\in \N$ and let $i_1,j_1,\dots,i_r,j_r$ be integers such that $j_m\le 2^{i_m-1}$ for all $m\le r$ (and possibly with repetition). Then for all $\alpha>12$ as $n\to\infty$ we have
\begin{equation}\label{mom-1}
\left(\frac{8\log n}{\pi^2}\right)^{2r}\cdot 
\E{\prod_{m=1}^r\chi_{n,\alpha}(i_m,j_m)}\ \sim \ \E{\prod_{m=1}^r \left(\sum_{(k,\ell)\in A_{i_m,j_m}^{n} }  \Gd(S_k,S_\ell)\right) }.
\end{equation}
\end{lemma}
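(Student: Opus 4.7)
The plan is to expand the moment on the left of \eqref{mom-1} into a sum over time pairs $(k_m,\ell_m)\in A_{i_m,j_m}^{n,\alpha}$ and sites $(x_m,y_m)\in B_\alpha$, and to show that when the $2r$ time windows of radius $n_{4\alpha}$ around the $k_m$ and $\ell_m$ are pairwise disjoint (the dominant case), the joint non-intersection probability factorises via the Markov property of $S$ at the window boundaries, and each factor carries an asymptotic $\pi^2/(8\log n)$ from Proposition~\ref{lem:inter}. Collecting the $2r$ factors yields the normalisation $(8\log n/\pi^2)^{2r}$ on the left of~\eqref{mom-1}, and what remains is $\E{\prod_m G_d(S(k_m),S(\ell_m))}$, up to removing the spatial restriction $B_\alpha$ and extending $A_{i_m,j_m}^{n,\alpha}$ back to $A_{i_m,j_m}^n$.

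Expanding the product and interchanging the conditional expectations (given $S$) with the independent auxiliary walks gives
\begin{equation*}
\E{\prod_{m=1}^r \chi_{n,\alpha}(i_m,j_m)} = \sum_{\mathbf{k},\boldsymbol{\ell},\mathbf{x},\mathbf{y}} \prod_{m=1}^r G_d(x_m,y_m)\cdot \bP(\mathcal{E}),
\end{equation*}
where $\bP(\mathcal{E})$ is the joint probability over $S$ and $2r$ independent walks $\RR^{1,m},\RR^{2,m}$ (each from $0$) that $S(k_m)=x_m$, $S(\ell_m)=y_m$ for every $m$, together with the local self-avoidance conditions and the non-intersection of $x_m+\RR^{1,m}[1,\infty)$ (resp.\ $y_m+\RR^{2,m}[1,\infty)$) with $\RR[k_m\pm n_{4\alpha}]$ (resp.\ $\RR[\ell_m\pm n_{4\alpha}]$). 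Tuples $(\mathbf{k},\boldsymbol{\ell})$ for which two of the $2r$ windows overlap form a set of size $O(n^{2r-1}\, n_{4\alpha})$; for each such tuple, bounding $\prod_m G_d(x_m,y_m)\le G_d(0)^r$ and applying Theorem~\ref{thm:lawler} to the $2r-1$ non-overlapping local events yields a contribution $O((\log n)^{-(2r-1)})$, so the total overlapping contribution is $o(n^{2r}/(\log n)^{2r})$ for $\alpha>12$.

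For the dominant disjoint-windows case, the Markov property of $S$ at the $4r$ window boundaries makes the walk segments inside different windows conditionally independent, while the auxiliary walks are fully independent, so that $\bP(\mathcal{E})$ factorises over $m$. Each local factor, after shifting the origin to $x_m$ and reversing time on the backward half of the window, becomes the event of Proposition~\ref{lem:inter} with $n$ replaced by $n_{4\alpha}$ (and using $\log n_{4\alpha}\sim\log n$). Integrating over the unobserved window endpoints via the local CLT~\eqref{localCLT} and Claim~\ref{cl:usefulpikx} shows that each local factor equals $(\pi^2/(8\log n))\,p_{k_m}(x_m)(1+o(1))$ plus an error of order $f_{k_m}(x_m/2)/(\log n)^{3/2}$ which vanishes after summing against $\prod_m G_d(x_m,y_m)$ by Lemma~\ref{lem:sumoff}, exactly as in the proof of Lemma~\ref{lem:firstsum}. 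Combining the $2r$ factors and summing over $\mathbf{x},\mathbf{y}\in B_\alpha$ produces
\begin{equation*}
\sum_{\mathbf{x},\mathbf{y}}\prod_m G_d(x_m,y_m)\,\bP(\mathcal{E}) \sim \Bigl(\frac{\pi^2}{8\log n}\Bigr)^{2r}\E{\prod_m G_d(S(k_m),S(\ell_m))\,\1((S(k_m),S(\ell_m))\in B_\alpha)},
\end{equation*}
and Lemmas~\ref{lem:gg} and~\ref{lem:epsilonphi} allow removal of both the $B_\alpha$ restriction and the replacement of $A_{i_m,j_m}^{n,\alpha}$ by $A_{i_m,j_m}^n$ at cost $o(n^{2r}/(\log n)^{2r})$, which after multiplication by $(8\log n/\pi^2)^{2r}$ produces~\eqref{mom-1}.

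The main obstacle is the uniform control of the $\pi^2/(8\log n)$ asymptotic of Proposition~\ref{lem:inter} when applied conditionally on the Markov window boundaries: the proposition is stated without such conditioning, and we need the asymptotic to persist uniformly across typical boundary values. The restriction $(x_m,y_m)\in B_\alpha$ ensures that the spatial arguments satisfy the hypotheses of Proposition~\ref{lem:inter}; atypical boundary configurations (norms outside the diffusive range $\sqrt{n_{4\alpha}}(\log n)^{c}$) have total probability $o(1)$ and do not affect the leading asymptotic. A secondary challenge is the systematic bookkeeping of the $O(f/(\log n)^{3/2})$ error terms across the $r$ factors, handled via repeated application of Lemma~\ref{lem:sumoff}.
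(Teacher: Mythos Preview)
Your overall strategy---expand, separate the disjoint-window configurations from the overlapping ones, and extract the $(\pi^2/(8\log n))^{2r}$ from the disjoint part via Proposition~\ref{lem:inter}---matches the paper's. However, your treatment of both the error term and the main term has genuine gaps.

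\textbf{The overlap bound is off by a factor of $n^r$.} You bound $\prod_m G_d(x_m,y_m)\le G_d(0)^r$, then use the non-intersection events to gain $(\log n)^{-(2r-1)}$. This gives at best
\[
O\bigl(n^{2r-1}\,n_{4\alpha}\bigr)\cdot G_d(0)^r\cdot (\log n)^{-(2r-1)}\ =\ O\!\left(\frac{n^{2r}}{(\log n)^{4\alpha+2r-1}}\right),
\]
which is hopelessly large compared to the target $o(n^r/(\log n)^{2r})$. The point is that throwing away all decay of $G_d$ loses the $n^{-r}$ coming from $\prod_m G_d(x_m,y_m)$. The paper instead uses the restriction $(x_m,y_m)\in B_\alpha$, which forces $\|x_m-y_m\|\ge \sqrt{n_{2\alpha}}$ and hence $G_d(x_m,y_m)\lesssim 1/n_{2\alpha}$, so that $\prod_m G_d\lesssim n_{2\alpha}^{-r}$. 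Even then, the bound on the complement only works after introducing a \emph{second} separation scale $\beta>\alpha$: one restricts to $D_\beta\times E_\beta$ and simultaneously enlarges $\AA_\alpha\subset\AA_\beta$ so that the (now smaller) $n_{4\beta}$-windows are disjoint for $n_\beta$-separated times. The freedom to take $\beta$ large (depending on $r$ and $\alpha$) is what makes the complement $o(n^r/(\log n)^{2r})$; with only the single fixed scale $\alpha$, the bound fails for $r\ge 2$.

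\textbf{The Markov factorisation cannot be done at all $4r$ boundaries simultaneously.} Doing so turns each local factor into a \emph{bridge} event (the walk inside the window is conditioned on both endpoints), and Proposition~\ref{lem:inter} is stated for a free forward walk with only one endpoint fixed. The paper avoids this by iterating: apply Markov at a single point $j=k_{r-1}+n_{4\beta}$ just past the $(r{-}1)$-st window, so that the forward walk from $S_j=z$ is free and Proposition~\ref{lem:inter} applies directly; then Claim~\ref{cl:usefulpikx} replaces $p_{k_r-j}(z,x_r)$ by $p_{k_r-k_{r-1}}(x_{r-1},x_r)$; then repeat. Relatedly, your claim that ``each local factor equals $(\pi^2/(8\log n))\,p_{k_m}(x_m)$'' cannot be right: after ordering the $2r$ marked times, the factors involve \emph{increments} $p_{t_i-t_{i-1}}(z_{i-1},z_i)$, not absolute transition probabilities from the origin.
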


Before proving the lemma above we state a result on the convergence of discrete quantities to their continuous counterparts. We defer the proof after we prove Lemma~\ref{lem:moments}. For all $\beta>0$ we define the sets $D_\beta$ and $E_\beta$ to be the set of time indices and points in space at distance $n_\beta$ and $\sqrt{n_{2\beta}}$ apart respectively. More precisely, we define
\begin{align}\label{eq:defD}
\begin{split}
D_\beta = \bigg\{(k_1,\ell_1,&\ldots, k_r,\ell_r) \in \N^{2r}:\,\,(k_{m},\ell_m)\in A_{i_m,j_m}^{n,\beta},\,\,\forall \,m\leq r \text{ and } \\
&|k_m - k_{m'}|, |k_m - \ell_{m'}|, |\ell_m-\ell_{m'}|\geq n_\beta,  \,\forall\, m, m'\leq r\text{ with } m\neq m'\bigg\}.
\end{split}
\end{align}
and also, with $B_\beta$ as in \eqref{Balpha},
\begin{align}\label{eq:defE}
\begin{split}
E_\beta=\bigg\{(x_1,y_1,&\ldots, x_r,y_r)\in (\Z^{d})^{2r}: \, (x_m,y_m)\in B_\beta, \,\, \forall \, m\leq r\text{ and }\\
&\norm{x_m - x_{m'}}, \norm{x_m-y_{m'}}, \norm{y_m-y_{m'}}\geq \sqrt{n_{2\beta}}, \, \,\forall \, m,m'\leq r \text{ with } m\neq m'\bigg\}.
\end{split}
\end{align}

\begin{lemma}\label{lem:donsker}
Let $r\in \N$ and let $i_1,j_1,\ldots, i_r, j_r$ be integers satisfying $j_m\leq 2^{i_m-1}$ for all $m\leq r$ (and possibly with repetition). Then as $n\to \infty$ we have 
\begin{align*}
	\frac{1}{n^r}\cdot \prod_{m=1}^{r}\left(\sum_{(k,\ell)\in A_{i_m,j_m}^n} \Gd(S_k,S_\ell) \right) \quad \stackrel{(d)}{\Longrightarrow}\quad  \prod_{m=1}^{r}\int_{A_{i_m,j_m}} 16 G(\beta_s,\beta_t)\,ds\,dt.
\end{align*}
Moreover, for all $\beta>2$ we have 
\begin{align*}
	\frac{1}{n^r} \cdot \sum_{ D_\beta} \1((S_{k_m},S_{\ell_m})_{m\le r} \in E_\beta)\prod_{m=1}^{r} \Gd(S_{k_m},S_{\ell_m}) \quad \stackrel{(d)}{\Longrightarrow} \quad \prod_{m=1}^{r}\int_{A_{i_m,j_m}} 16G(\beta_s,\beta_t)\,ds\,dt,
\end{align*}
where in the sum above we take $(k_1,\ell_1,\ldots, k_r,\ell_r)\in D_\beta$.
Finally in both cases we also have convergence in expectation.
\end{lemma}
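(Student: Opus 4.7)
The plan is to combine Donsker's invariance principle, the asymptotic $G_d(z)=4G(z)+\mathcal{O}(1/(1+\|z\|^4))$ from \eqref{Green.bound}, and a Riemann sum argument. I would first invoke Skorohod's representation theorem to realise, on a common probability space, the simple random walk $(S_k)_{k\ge 0}$ and a standard $4$-dimensional Brownian motion $(\beta_t)_{t\ge 0}$ so that
\[
\sup_{0\le t\le 1}\ \bigl\| 2 S_{\lfloor nt\rfloor}/\sqrt{n} - \beta_t\bigr\|\ \longrightarrow\ 0\quad \text{a.s.}
\]
as $n\to\infty$; the factor $2=\sqrt d$ arises because each coordinate of the simple walk on $\Z^4$ has variance $1/4$. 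Under this coupling, for $(k,\ell)$ with $|k-\ell|/n$ macroscopic one has $\|S_k-S_\ell\|^2=(n/4)\|\beta_{k/n}-\beta_{\ell/n}\|^2(1+o(1))$ and hence
\[
G_d(S_k-S_\ell)\ =\ \frac{16}{n}\cdot G\bigl(\beta_{k/n}-\beta_{\ell/n}\bigr)(1+o(1)),
\]
so $\frac{1}{n}\sum_{A_{i_m,j_m}^n} G_d$ behaves as a Riemann sum for $16\int_{A_{i_m,j_m}} G(\beta_s-\beta_t)\,ds\,dt$, producing the $16^r$ constant in the product.

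I would prove the restricted convergence (the second assertion) first. The constraints defining $D_\beta$ and $E_\beta$ enforce $|k_m-\ell_m|,|k_m-k_{m'}|,\ldots\ge n_\beta$ and $\|S_{k_m}-S_{\ell_m}\|,\ldots\ge \sqrt{n_{2\beta}}$, which under the coupling translate into separation away from zero for the pairs $(\beta_{k/n},\beta_{\ell/n})$. On this region $G$ is bounded and continuous, so for each typical Brownian realisation the restricted sum is a genuine Riemann sum of a bounded continuous function and converges pointwise to the corresponding restricted integral. Letting $\beta\to 0$ in the scale $n_\beta$ and using the $L^2$ bound from Proposition~\ref{pro:carleman} to guarantee that $\int_{A_{i_m,j_m}} G(\beta_s-\beta_t)\,ds\,dt$ is almost surely finite, one then recovers the full (unrestricted) integral as the limit.

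For the first convergence it suffices to control the difference between the unrestricted and restricted sums. The complement of the conditions defining $D_\beta$ and $E_\beta$ splits into exactly the three types of configurations treated in Lemma~\ref{lem:gg}: closeness in time is handled by \eqref{GSK-2}--\eqref{GSK-2bis} ($\lesssim n_\beta\log n$), while the small-gap/small-position events are handled by \eqref{GSK-3} ($\lesssim n_{2\beta}(\log n)^2$). Both are $o(n)$, so a Cauchy--Schwarz bound on the product of $r$ factors (bounding one small factor in $L^1$ and the remaining factors in $L^2$) gives $o(n^r)$ for the difference in expectation. For convergence in expectation I would establish uniform integrability via the second-moment bound $\E{\bigl(\tfrac{1}{n}\sum_{A_{i,j}^n} G_d(S_k-S_\ell)\bigr)^2}=\mathcal O(1)$, obtained by expanding the square and reducing, via \eqref{GSK-1}, to $\sum_{k\le n}\E{G_d(S_k)^2}=\mathcal O(1)$; this upgrades the convergence in law to convergence in expectation.

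The main obstacle is the singularity of $G$ at the corner $s=t=(2j-1)2^{-i}$ of $A_{i,j}$, where Brownian increments vanish. I would address this by a truncation at scale $\varepsilon\sqrt n$: on $\{\|S_k-S_\ell\|\ge \varepsilon\sqrt n\}$ the integrand is bounded by $C/(\varepsilon^2 n)$ and the Riemann sum argument passes through. The contribution of the bad set is controlled in $L^1$ by Lemma~\ref{lem:epsilonphi}, yielding $\mathcal O(\varepsilon\log(1/\varepsilon))$ uniformly in $n$, with a matching estimate on the Brownian side coming from Proposition~\ref{pro:carleman} via Markov's inequality. Sending $\varepsilon\to 0$ after $n\to\infty$ completes the argument.
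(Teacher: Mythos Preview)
Your proposal assembles the same ingredients as the paper --- a truncation of the singularity of $G$, Donsker's invariance principle, Lemma~\ref{lem:epsilonphi} for the near-diagonal contribution, and Lemma~\ref{lem:gg} for the passage between the unrestricted and the $(D_\beta,E_\beta)$-restricted sums --- so in spirit you are on the right track. There are, however, two genuine gaps in the way you order and justify the steps.

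\textbf{The restricted sum is not a bounded functional.} You propose to prove the $(D_\beta,E_\beta)$ version first by arguing that ``on this region $G$ is bounded and continuous''. But the space constraint in $E_\beta$ is $\|S_{k_m}-S_{\ell_m}\|\ge \sqrt{n_{2\beta}}$, which in Brownian scale is $\|\beta_s-\beta_t\|\gtrsim (\log n)^{-\beta}$; this tends to $0$, so the implied bound $G_d\lesssim (\log n)^{2\beta}/n$ degenerates and you cannot invoke Donsker on a bounded continuous functional. The paper avoids this by working in the opposite order: it first proves the \emph{unrestricted} convergence using a cutoff at a \emph{fixed} macroscopic scale (a continuous $\phi_\varepsilon$ with $\1(\|x\|\ge\varepsilon)\le\phi_\varepsilon(x)\le\1(\|x\|\ge\varepsilon/2)$), so that $G(x)\phi_\varepsilon(x)$ is genuinely bounded and continuous; only afterwards does it pass to the restricted version via Lemma~\ref{lem:gg} and Cauchy--Schwarz. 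Your final paragraph in fact contains the correct $\varepsilon$-truncation idea, but it should be the \emph{starting} point, not an afterthought.

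\textbf{The moment bounds do not follow from \eqref{GSK-1}.} Both your H\"older step (bounding $r-1$ factors in $L^2$) and your uniform integrability argument require control of
\[
\mathbb E\!\left[\prod_{m=1}^{r}\Bigl(\frac 1n\sum_{(k,\ell)\in A_{i_m,j_m}^n} G_d(S_k,S_\ell)\Bigr)\right]\ =\ \mathcal O(1),
\]
with repetitions allowed (so in particular all moments of a single factor). Your claimed reduction ``expanding the square and reducing, via \eqref{GSK-1}, to $\sum_{k\le n}\mathbb E[G_d(S_k)^2]=\mathcal O(1)$'' is incorrect: the second moment of $\sum_{(k,\ell)} G_d(S_k,S_\ell)$ is a four-fold sum of correlations $\mathbb E[G_d(S_{k_1},S_{\ell_1})G_d(S_{k_2},S_{\ell_2})]$ with overlapping increments, and there is no way to collapse this to a single sum of $\mathbb E[G_d(S_k)^2]$. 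The paper obtains exactly this bound as Lemma~\ref{lem:previouspaper} (citing \cite[Lemma~3.2]{Asselah:2016hc}), and then uses it both in the H\"older step controlling $R_n(\varepsilon)$ and for uniform integrability. You should invoke that lemma rather than \eqref{GSK-1}.
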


\begin{proof}[\bf Proof of Lemma~\ref{lem:moments}]
For shorthand notation we write for $k\ge 0$, $x\in \mathbb Z^d$ and $\Lambda\subset \Z^d$, 
\begin{align*}
\AA_\alpha(k,x, \Lambda)	=\{\RR[k-n_{4\alpha},k+n_{4\alpha}] \cap (x+\Lambda)=\emptyset, S_k=x\notin \RR[k-n_{4\alpha},k)\}. 
\end{align*}
Now for multi-indices $i=(i_1,\dots,i_r)$ and $j=(j_1,\dots,j_r)$, write  as above $A_{i,j}^{n,\alpha} = A_{i_1,j_1}^{n,\alpha}\times \ldots\times A_{i_r,j_r}^{n,\alpha}$. Let $({\RR}^m)_m$ and $(\til{\RR}^m)_m$ be the ranges of  independent walks starting from $0$.  Then we have 
\begin{align}\label{eq:prodexp}
\begin{split}
	\E{\prod_{m=1}^{r}\chi_{n,\alpha}(i_m,{j_m})} = \sum_{\substack{A_{i,j}^{n,\alpha},\, B_\alpha} }  &\pr{\bigcap_{m=1}^{r}(\AA_\alpha(k_m,x_m,{\RR}^m)\cap \AA_\alpha(\ell_m,y_m, \til{\RR}^m))}\\ &\times  \prod_{m=1}^{r} \Gd(x_m,y_m),
	\end{split}
\end{align}
where in the sum above we take $(k_1,\ell_1,\dots,k_r,\ell_r)\in A_{i,j}^{n,\alpha}$ and $(x_m,y_m)\in B_\alpha$ for all $m\leq r$.

First of all it is obvious that
\begin{align}\label{eq:lowdalpha}
\begin{split}
	\E{\prod_{m=1}^{r}\chi_{n,\alpha}(i_m,{j_m})}\geq \sum_{ D_\alpha, E_\alpha}  &\pr{\bigcap_{m=1}^{r}(\AA_\alpha(k_m,x_m,{\RR}^m)\cap \AA_\alpha(\ell_m,y_m, \til{\RR}^m))}\\&\times\prod_{m=1}^{r} \Gd(x_m,y_m).
	\end{split}
\end{align}
Next we want to establish that for a suitable $\beta>\alpha$ we have 
\begin{align}\label{eq:powersgoal}
\begin{split}
	\E{\prod_{m=1}^{r}\chi_{n,\alpha}(i_m,{j_m})}\leq \sum_{ D_\beta, E_\beta}  \pr{\bigcap_{m=1}^{r}(\AA_\beta(k_m,x_m,{\RR}^m)\cap \AA_\beta(\ell_m,y_m, \til{\RR}^m))}\\ \times \prod_{m=1}^{r} \Gd(x_m,y_m)
	 + o(n^r/(\log n)^{2r}).
	\end{split}
\end{align}
Indeed, we can decompose~\eqref{eq:prodexp} into two parts, one over the set $D_\beta\times E_\beta$ and one over the complementary set. Since $\beta>\alpha$, we notice that $\AA_\alpha(k,x,\Lambda)\subseteq \AA_\beta(k,x,\Lambda)$ for all $k,x,\Lambda$. So we only need to show that the sum over $D_\beta^c\cup E_\beta^c$ is $o(n^r/(\log n)^{2r})$. 

Forgetting about the intersection events, we can upper bound this sum by
\begin{align*}
	\sum_{\substack{A_{i,j}^{n,\alpha},\, B_\alpha, \,D_\beta^c}} \pr{\bigcap_{m=1}^{r}\{S(k_m)=x_m,S(\ell_m)=y_m\}}&\cdot \prod_{m=1}^{r}\Gd(x_m,y_m) \\
	+ \sum_{\substack{A_{i,j}^{n,\alpha},\, B_\alpha, \, E_\beta^c}} &\pr{\bigcap_{m=1}^{r}\{S(k_m)=x_m,S(\ell_m)=y_m\}}\cdot \prod_{m=1}^{r}\Gd(x_m,y_m).
\end{align*}
We start by bounding the first sum appearing above. Using that $\Gd(x,y)\leq 1/n_{2\alpha}$ for all $(x,y)\in B_\alpha$ gives
\begin{align}\label{eq:replace}
\begin{split}
	\sum_{A_{i,j}^{n,\alpha}, \,B_\alpha, \,D_\beta^c} \pr{\bigcap_{m=1}^{r}\{S(k_m)=x_m,S(\ell_m)=y_m\}}\cdot \prod_{m=1}^{r}\Gd(x_m,y_m)\\
	\lesssim \frac{n^{2r-1}}{(n_{2\alpha})^{r}} \cdot n_\beta = \frac{n^r}{(\log n)^{\beta-2\alpha r}}, 
	\end{split}
\end{align}
where we used that if $\beta>\alpha$, then $ A_{i_m,j_m}^{n,\alpha}\subseteq 
A_{i_m,j_m}^{n,\beta}$.
We now turn to the second sum. Using again the bound on $\Gd(x,y)$ for $(x,y)\in B_\alpha$ as above, we now get
\begin{align}\label{eq:replace2}
\begin{split}
	\sum_{A_{i,j}^{n,\alpha}, \,B_\alpha, E_\beta^c }\pr{\bigcap_{m=1}^{r}\{S(k_m)=x_m,S(\ell_m)=y_m\}}\cdot \prod_{m=1}^{r}\Gd(x_m,y_m)\\
	\leq \frac{n^{2r}}{(n_{2\alpha})^{r}}\cdot 
	\sup_{t\geq n_\alpha}\pr{\norm{S_{t}}\leq \sqrt{n_{2\beta}}},
	\end{split}
\end{align}
where $S$ stands for a simple random walk on $\Z^4$. Using now~\eqref{dispertion} we can upper bound the second sum by
\begin{align*}
	\sum_{A_{i,j}^{n,\alpha}, \,B_\alpha, E_\beta^c }\pr{\bigcap_{m=1}^{r}\{S(k_m)=x_m,S(\ell_m)=y_m\}}\cdot \prod_{m=1}^{r}\Gd(x_m,y_m) &\lesssim \frac{n^r}{(\log n)^{4\beta-2\alpha-2\alpha r}}.
\end{align*}
Thus taking $\beta >2\alpha r+2\alpha + 2r+1$ proves~\eqref{eq:powersgoal}.

We next show that for all $\beta$ sufficiently large we have the following: for any $0=k_0<k_1<\ldots<k_r$ and any $0=x_0, x_1,\ldots, x_r$ satisfying $|k_{i+1}-k_i|\geq n_\beta$, $\norm{x_{i+1}-x_i}\geq \sqrt{n_{2\beta}}$ and $\norm{x_i}\leq \sqrt{n}(\log n)^2$ for all $i<r$ we have
\begin{align}\label{eq:firstst}
	\begin{split}
		\pr{\bigcap_{m=1}^{r}\AA_\beta(k_m,x_m,\RR^m)} = \left(\frac{\pi^2}{8}\cdot\frac{1}{\log n}\right)^r\cdot \prod_{m=1}^{r}p_{k_m-k_{m-1}}(x_{m-1},x_m) \cdot(1+o(1)) \\+ \mathcal O\left( \frac{1}{(\log n)^{r+\frac{1}{2}}} \cdot \prod_{m=1}^{r}f_{k_m-k_{m-1}}((x_{m-1}-x_m)/2)  \right).
	\end{split}
\end{align}

We write $j=k_{r-1}+n_{4\beta}$ and define the event 
\[
D = \left\{ \norm{S_j-S_{k_{r-1}}} \leq \sqrt{n_{4\beta}} \cdot (\log n)^{3\beta/4}\right\}.
\]
We therefore obtain 
\begin{align}\label{eq:bbcom}
\begin{split}
	\pr{\bigcap_{m=1}^{r}\AA_\beta(k_m,x_m,\RR^m)} = \sum_{z\in B(x_{r-1})}\pr{\bigcap_{m=1}^{r}\AA_\beta(k_m,x_m,\RR^m), S_j=z} \\+\pr{\bigcap_{m=1}^{r}\AA_\beta(k_m,x_m,\RR^m),  D^c},
	\end{split}
\end{align}
where $B(x_{r-1}) = \{z:\,\|z-x_{r-1}\|\leq \sqrt{n_{4\beta}} \cdot (\log n)^{3\beta/4}\}$. 
For $z\in B(x_{r-1})$ by the Markov property we have
\begin{align*}
	\pr{\bigcap_{m=1}^{r}\AA_\beta(k_r,x_r,\RR^r), S_j=z} = \prcond{\AA_\beta(k_r,x_r,\RR^r)}{S_j=z}{}\pr{\bigcap_{m=1}^{r-1}\AA_\beta(k_m, x_m, \RR^m), S_j=z}.
\end{align*}
Taking $\beta$ satisfying $\beta>3$ and 
applying Proposition~\ref{lem:inter} to the first term appearing on the right hand-side above we obtain as~$n\to\infty$
\begin{align}\label{eq:xpz}
\begin{split}
	\prcond{\AA_\beta(k_r,x_r,\RR^r)}{S_j=z}{} &= \frac{\pi^2}{8}\cdot \frac{1}{\log n} \cdot p_{k_r-j}(z,x_r) \cdot (1+o(1)) \\ &\quad \quad + \mathcal{O}\left( \frac{1}{(\log n)^{3/2}} \cdot f_{k_r-j}((z-x_r)/2)   \right).
	\end{split}
\end{align}
Then by Claim~\ref{cl:usefulpikx} and~\eqref{localCLT} and taking $\beta$ also satisfying $\beta>8$ we get that 
\[
p_{k_r-j}(z,x_r) = p_{k_r-k_{r-1}}(x_{r-1},x_r)\cdot (1+o(1))
\]
and similarly for $f$.
Substituting this into~\eqref{eq:xpz} gives
\begin{align*}
\prcond{\AA_\beta(k_r,x_r,\RR^r)}{S_j=z}{} = \frac{\pi^2}{8}\cdot \frac{1}{\log n} \cdot p_{k_r-k_{r-1}}(x_{r-1},x_r) \cdot (1+o(1)) \\ + \mathcal{O}\left( \frac{1}{(\log n)^{3/2}} \cdot f_{k_r-k_{r-1}}((x_r-x_{r-1})/2)   \right).
\end{align*}
Hence overall we obtain
\begin{align*}
\pr{\bigcap_{m=1}^{r} \AA_\beta(k_m,x_m,\RR^m), D}=	\sum_{z\in B(x_{p-1})}\pr{\bigcap_{m=1}^{r-1}\AA_\beta(k_m,x_m,\RR^m), \AA_\beta(k_r,x_r,\RR^r), S_j=z} \\= \frac{\pi^2}{8}\cdot \frac{1}{\log n} \cdot p_{k_r-k_{r-1}}(x_{r-1},x_r) \cdot (1+o(1))\cdot \pr{\bigcap_{m=1}^{r-1}\AA_\beta(k_m,x_m,\RR^m), D}\\
	\quad + \mathcal{O}\left( \pr{\bigcap_{m=1}^{r-1}\AA_\beta(k_m,x_m,\RR^m), D}\frac{1}{(\log n)^{3/2}} \cdot f_{k_r-k_{r-1}}((x_r-x_{r-1})/2)   \right).
\end{align*}
We now turn to the last probability appearing above. First we write
\[
\pr{\bigcap_{m=1}^{r-1}\AA_\beta(k_m,x_m,\RR^m), D} = \pr{\bigcap_{m=1}^{r-1}\AA_\beta(k_m,x_m,\RR^m)}- \pr{\bigcap_{m=1}^{r-1}\AA_\beta(k_m,x_m,\RR^m),D^c}.
\]
By the Markov property and \eqref{upper.heatkernel} we have
\begin{align*}
	\pr{\bigcap_{m=1}^{r-1}\AA_\beta(k_m,x_m,\RR^m),D^c}& \leq \prod_{m=1}^{r-1}p_{k_m-k_{m-1}}(x_{m-1},x_m) \cdot \exp\left( - (\log n)^{2\beta}\right)\\
	&=  o(1)\cdot \pr{\bigcap_{m=1}^{r-1}\AA_\beta(k_m,x_m,\RR^m)}.
\end{align*}
So far we have shown that 
\begin{align*}
	\pr{\bigcap_{m=1}^{r}\AA_\beta(k_m,x_m,\RR^m), D} &= \frac{\pi^2}{8\log n}p_{k_r-k_{r-1}}(x_{r-1},x_r) \cdot \pr{\bigcap_{m=1}^{r-1}\AA(k_m,x_m,\RR^m)}(1+o(1))\\
	&+\mathcal{O}\left( \pr{\bigcap_{m=1}^{r-1}\AA_\beta(k_m,x_m,\RR^m)}\frac{1}{(\log n)^{3/2}} \cdot f_{k_r-k_{r-1}}((x_r-x_{r-1})/2)   \right).
\end{align*}
Next we treat the second term on the right hand-side of~\eqref{eq:bbcom}. Taking $\beta$ such that $3\beta/2>\beta+7$ we have 
\begin{align*}
	\pr{\bigcap_{m=1}^{r}\AA(k_m,x_m,\RR^m), D^c}&\leq \prod_{m=1}^{r-1}p_{k_m-k_{m-1}}(x_{m-1},x_m) \cdot \exp\left(-(\log n)^{3\beta/2} \right) \\
	&\leq \prod_{m=1}^{r-1}p_{k_m-k_{m-1}}(x_{m-1},x_m) \cdot \exp(-(\log n)^{\beta+7}) \\
	\leq \prod_{m=1}^{r}&p_{k_m-k_{m-1}}(x_{m-1},x_m) \cdot \exp(-(\log n)^{2})= o(1)\cdot \pr{\bigcap_{m=1}^{r}\AA(k_m,x_m,\RR^m)},
\end{align*} 
where for the second inequality we used  that $p_{k_r-k_{r-1}}(x_{r-1},x_r) \gtrsim \exp(-2(\log n)^{\beta+4})$, since we have taken $k_r-k_{r-1}\geq n_\beta$. This now proves that
\begin{align*}
	\pr{\bigcap_{m=1}^{r}\AA(k_m,x_m,\RR^m)} &=\frac{\pi^2}{8}\cdot \frac{1}{\log n} \cdot p_{k_r-k_{r-1}}(x_{r-1},x_r) \cdot \pr{\bigcap_{m=1}^{r-1}\AA(k_m,x_m,\RR^m)}\cdot (1+o(1))
	\\ &\,\,+ \mathcal{O}\left( \pr{\bigcap_{m=1}^{r-1}\AA_\beta(k_m,x_m,\RR^m)}\frac{1}{(\log n)^{3/2}} \cdot f_{k_r-k_{r-1}}((x_r-x_{r-1})/2)   \right).
\end{align*}
Iterating this proves~\eqref{eq:firstst}. 

Plugging now~\eqref{eq:firstst} into~\eqref{eq:lowdalpha} and~\eqref{eq:powersgoal} and taking the sum over all time indices and points in space and invoking Lemma~\ref{lem:donsker} finishes the proof of the lemma.
\end{proof}

\subsection{Proof of Proposition \ref{prop.moments.chi}}
\label{sec:factor}

In this section we give the proof of Proposition~\ref{prop.moments.chi}. We start by recalling a result from~\cite{Asselah:2016hc} and then we prove Lemma~\ref{lem:donsker}.

\begin{lemma}\label{lem:previouspaper}
	There exists a positive constant $C$ so that for all $r\geq 1$ and all $i_1,j_1,\ldots, i_r, j_r$ (satisfying $j_m\leq 2^{i_m-1}$ for all $m\leq r$ and possibly with repetition), one has 
	\[
	 \E{\prod_{m=1}^{r}\sum_{(k,\ell)\in A_{i_m,j_m}^n} \Gd(S_k,S_\ell) } \leq C n^r. 
	\]
\end{lemma}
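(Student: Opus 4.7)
Since the stated constant $C$ may depend on $r$, the plan is first to reduce the joint bound to the $r$-th moment of a single cross sum via Hölder, and then to control that moment by an iterated Markov-property peeling argument. With $X_m:=\sum_{(k,\ell)\in A_{i_m,j_m}^n}\Gd(S_k,S_\ell)$, the generalised Hölder inequality with equal exponents $r$ gives
$$\E{\prod_{m=1}^{r}X_m}\,\le\,\prod_{m=1}^{r}\E{X_m^{r}}^{1/r},$$
so it suffices to show that for every $(i,j)$ with $j\le 2^{i-1}$ one has $\E{X_{i,j}^{r}}\le C(r)\,n^{r}$. Note that since $A_{i,j}^n=[a,b)\times[b,c]$ with $b-a=c-b=n/2^i\le n$, every pair satisfies $k<\ell$ and $\Gd(S_k,S_\ell)=\Gd(S_\ell-S_k)$ depends only on the walk increment on $(k,\ell]$.

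The key quantitative input is the uniform Green's function tail estimate
$$\sup_{v\in\Z^4}\E{\Gd(v+S_k)}\,=\,\sup_{v\in\Z^4}\sum_{j\ge k}p_j(v)\,\le\,\frac{C}{k}\qquad(k\ge 1),$$
which follows from the on-diagonal local CLT bound $p_j(v)\le p_j(0)\asymp j^{-2}$. The crucial feature is that the decay $C/k$ is uniform in the base point $v$, so Green's function can be integrated against an arbitrary correlated shift. For $r=1$ this immediately gives $\E{\Gd(S_{\ell-k})}\le C/(\ell-k)$, and the resulting double sum over $A_{i,j}^n$ telescopes, by the same computation as in Lemma~\ref{lem:gg}, to $\lesssim n/2^i\le n$.

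For higher $r$ I would expand
$$\E{X_{i,j}^{r}}\,=\,\sum_{(k_s,\ell_s)_{s=1}^{r}\in(A_{i,j}^n)^r}\E{\prod_{s=1}^{r}\Gd(S_{\ell_s}-S_{k_s})},$$
sort the $2r$ endpoints $t_1\le\cdots\le t_{2r}$ for each fixed tuple, and represent the walk through its independent increments $\Delta_q:=S_{t_q}-S_{t_{q-1}}$, so that each $\Gd$ factor is a function of the sum of a specific consecutive block of increments. I then integrate the increments out one at a time from the latest to the earliest: the increment $\Delta_{2r}$ lies inside the unique factor whose $\ell_s$ equals $t_{2r}$, and the uniform estimate above allows its integration with a gain $C/(t_{2r}-t_{2r-1})$ independent of all other arguments. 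Iterating peels one increment at a time, producing a schematic bound $C^r\prod_s(\ell_s^*-k_s^*)^{-1}$ for suitably relabelled gaps, and summing over configurations in $(A_{i,j}^n)^r$ collapses to a product of $r$ copies of the $r=1$ sum, giving $C(r)\,n^r$.

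The hardest part is the combinatorial bookkeeping when the intervals $(k_s,\ell_s]$ overlap, because then several Green's function factors share common increments and a naive peel-the-latest strategy is not sharp enough. The cleanest treatment is an induction on $r$ that splits into cases according to the combinatorial type (disjoint, nested, partially overlapping) of the endpoint ordering; in each type one first conditions on the shared middle increments, then integrates the uncorrelated pieces via the uniform estimate, and only afterwards carries out the combinatorial count of valid configurations. Any residual logarithmic slack coming from the entangled configurations is harmless since it can be absorbed into the $r$-dependent constant $C(r)$ permitted by the statement.
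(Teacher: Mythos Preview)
Your Hölder reduction to the $r$-th moment of a single $X_{i,j}$ is exactly what the paper does (it writes ``Cauchy--Schwarz'' and then cites \cite[Lemma~3.2]{Asselah:2016hc} for the single-block moment bound). The gap is in the second step.

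The uniform estimate $\sup_{v}\E{\Gd(v+S_k)}\le C/k$ is correct, but iterating it through a peel-the-latest scheme does \emph{not} yield a log-free bound. Take $r=2$ with all $k_s\in[a,b)$, $\ell_s\in[b,c]$, $L=b-a=c-b$. Sorting $k_{(1)}<k_{(2)}<\ell_{(1)}<\ell_{(2)}$, your peel gives
\[
\E{\Gd(S_{\ell_1}-S_{k_1})\Gd(S_{\ell_2}-S_{k_2})}\ \lesssim\ \frac{1}{(\ell_{(2)}-\ell_{(1)})(\ell_{(1)}-k_{(2)})}
\]
in the nested configuration. Summing the free index $k_{(1)}<k_{(2)}$ contributes a factor $k_{(2)}-a$, summing $\ell_{(2)}>\ell_{(1)}$ contributes $\sum_{m\ge 1}1/m\asymp\log L$, and the remaining $(k_{(2)},\ell_{(1)})$ sum is $\asymp L\log L$; altogether one obtains order $L^2(\log L)^2$, not $L^2$. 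The ``logarithmic slack'' you dismiss is thus a power of $\log n$, not a combinatorial factor in $r$, so it cannot be absorbed into $C(r)$. The case analysis you sketch (disjoint/nested/overlapping) does not by itself repair this: in every overlapping type the uniform estimate produces a harmonic sum over a gap of random length at most $L$, hence a $\log L$.

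What the cited lemma (and, in the continuous setting, the proof of Proposition~\ref{pro:carleman} via Lemmas~\ref{momentLem1}--\ref{momentLem2}) actually does is \emph{not} to use the uniform-in-$v$ bound, but to keep the $v$-dependence: one shows that integrating one Green factor against the walk yields a quantity of the form $C\,(1+|\log\|x\||)$ with $x$ the current displacement, then iterates, so that the accumulating ``logs'' are random variables $|\log\|\beta_t\||$ rather than the deterministic $\log n$. Their moments under the walk/Brownian law are bounded by $r$-dependent constants (this is where the eventual $C^r r^{2r}$ comes from). Your argument needs this refinement; replacing the uniform peel by the position-dependent estimate and tracking the resulting log-powers through an induction is the missing idea.
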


\begin{remark}
\rm{
Since we allow repetition of the indices, Lemma~\ref{lem:previouspaper} shows that the random variables
\[
\frac{1}{n^r}\cdot \prod_{m=1}^{r}\sum_{(k,\ell)\in A_{i_m,j_m}^n} \Gd(S_k,S_\ell) 
\]
are bounded in $L^p$ for all $p\in \N$.
}	
\end{remark}

\begin{proof}[\bf Proof of Lemma~\ref{lem:previouspaper}]

The proof of the lemma follows directly by the Cauchy-Schwarz inequality together with~\cite[Lemma~3.2]{Asselah:2016hc}.\end{proof}

\begin{proof}[\bf Proof of Lemma~\ref{lem:donsker}]
	
	We start by proving the first statement of the lemma.
Let $\epsilon>0$ and $\phi_\epsilon$ be a continuous function satisfying 
	\[
	\1(\norm{x}\geq \epsilon)\leq \phi_\epsilon(x) \leq \1(\norm{x}\geq \epsilon/2).
	\]
Recall from~\eqref{Green.bound} that 
 	\[
 	\Gd(x) = 4G(x) + \mathcal{O}\left(\frac{1}{1+\norm{x}^4}\right).
 	\]
 	To simplify notation we now write 
\begin{align*}
X(m) = \sum_{(k,\ell)\in A_{i_m,j_m}^n} \Gd(S_k,S_\ell), \quad &X_\epsilon(m)= \sum_{(k,\ell)\in A_{i_m,j_m}^n} \Gd(S_k,S_\ell)\phi_\epsilon\left(\frac{S_k-S_\ell}{2\sqrt{n}}\right) \quad \text{ and }\\ &\til{X}_\epsilon(m) = \sum_{(k,\ell)\in A_{i_m,j_m}^n} 4\,G(S_k,S_\ell)\phi_\epsilon\left(\frac{S_k-S_\ell}{2\sqrt{n}}\right).
\end{align*}
Using~Lemma~\ref{lem:previouspaper} it is straightforward to check that 
\begin{align}\label{eq:boundxtilx}
\frac{1}{n^r}\cdot \E{\prod_{m=1}^{r}X_\epsilon(m) - \prod_{m=1}^{r}\til{X}_\epsilon(m)} = \mathcal{O}(1/n).
\end{align}
Note that the function $G(x) \phi_\epsilon(x)$ is continuous and bounded,  and that by Donsker's invariance principle $(S_{[nt]}/2\sqrt n,t\ge 0)$ converges in law to a standard Brownian motion. Hence we obtain as $n\to\infty$ 	
\begin{align}\label{eq:donskereps}
		\frac{1}{n^r} \cdot \prod_{m=1}^{r} \til{X}_\epsilon(m) \quad \stackrel{(d)}{\Longrightarrow} \quad  \prod_{m=1}^{r}\int_{A_{i_m,j_m}}16\,G(\beta_s,\beta_t)\phi_\epsilon(\beta_s-\beta_t)\,ds\,dt.
	\end{align}
For all $m\leq r$ we have
\begin{align*}
	X(m)- {X}_\epsilon(m) \leq \sum_{(k,\ell)\in A_{i_m,j_m}^n} \Gd(S_k,S_\ell) \1(\norm{S_k-S_\ell}\leq \epsilon\sqrt{n}),
\end{align*}
and hence using~Lemma~\ref{lem:epsilonphi} we get
\begin{align}\label{eq:errortermxy}
	\E{X(m)-{X}_\epsilon(m)} \lesssim n \cdot \epsilon \log\left(\frac{1}{\epsilon}
	\right).
	\end{align}
For each $\epsilon>0$ we now define
\[
R_n(\epsilon) = \frac{1}{n^r}\cdot \prod_{m=1}^{r}X(m) - \frac{1}{n^r} \cdot \prod_{m=1}^{r}{X}_\epsilon(m).
\]
In view of~\eqref{eq:boundxtilx} and~\eqref{eq:donskereps}, in order to complete the proof of the first statement, it suffices to prove that $\E{R_n(\epsilon)}\to 0$ as $\epsilon\to 0$ uniformly in $n$.

With the definitions above we can upper bound $R_n(\epsilon)$ by
\begin{align*}
	R_n(\epsilon) \leq \frac{1}{n^r} \cdot \sum_{w=1}^{r} (X(w)- {X}_\epsilon(w)) \cdot \prod_{\substack{m=1\\m\neq w}}^{r}X(m).
\end{align*}	
We now set $Z(w)$ to be equal to the product appearing above.
Using the obvious upper bound $X(m)-{X}_\epsilon(m) \leq (X(m)-{X}_\epsilon(m))^{1/2} \cdot (X(m))^{1/2}$ and H\"older's inequality twice gives
\begin{align*}
	\E{R_n(\epsilon)}\leq \frac{1}{n^r} \cdot \sum_{w=1}^{r} \left( \E{X(w)-{X}_\epsilon(w)} \right)^{1/2} \cdot (\E{(X(m))^3})^{1/6} \cdot (\E{(Z(w))^{3}})^{1/3}.
\end{align*}
Lemma~\ref{lem:previouspaper} and~\eqref{eq:errortermxy} now give that 
\begin{align}\label{eq:rnsmall}
\E{R_n(\epsilon)}\leq C_1 \cdot \sqrt{\epsilon \log\left(\frac{1}{\epsilon} \right)} \to 0 \quad \text{ as } \epsilon\to 0
\end{align}
and this proves the first convergence.

For the second statement we note that using Cauchy-Schwarz and similar arguments as in Lemma~\ref{lem:gg}
one can remove the sets $D_\beta$ and $E_\beta$ and then apply the first part of the lemma.

Finally we get the convergence in expectation as a consequence of weak convergence together with uniform integrability which follows directly from~Lemma~\ref{lem:previouspaper}.
\end{proof}

\begin{proof}[\bf Proof of Proposition \ref{prop.moments.chi}]

 By Cramer--Wold in order to deduce the weak convergence it suffices to prove that all linear combinations of variables on the left converge weakly to the corresponding linear combinations of variables on the right.
Lemma \ref{lem:deloc} shows that one can replace the terms $\chi_n(i,j)$ by their localised versions, $\chi_{n,\alpha}(i,j)$.  Lemmas~\ref{lem:moments} and~\ref{lem:donsker}
show that the moments of all linear combinations of the $\chi_{n,\alpha}(i,j)$ do actually converge to the corresponding moments. We only need to ensure that the moments of the limiting object uniquely characterise the distribution. This now follows from Proposition~\ref{pro:carleman} using Carleman's criterion (see Section 3.3.5 in \cite{Durrett}). Indeed, for any variable $X$ on the right of \eqref{conv.law.cross}, Proposition~\ref{pro:carleman} gives that 
\[
\E{X^p}\leq C^p p^{2p}.
\]
Therefore, if $X$ and $Y$ are two of the variables on the right, then by the triangle inequality for the~$L^p$-norm we get
\[
\E{(X+Y)^p}^{1/p} \ \leq\  \E{X^p}^{1/p} + \E{Y^p}^{1/p} \ \leq\  2C\cdot  p^2.
\]
Therefore, Carleman's condition also holds for the sum $X+Y$, hence its distribution is uniquely characterised by its moments. 
\end{proof}


\section{Central limit theorem}\label{sec:clt}
In this section we finally give the proof of Theorem~\ref{thm:clt}.

We start by proving an upper bound on the variance of $\cc{\RR_n}$ using the same technique as Le Gall did for the range in dimension $2$. 

\begin{lemma}\label{lem:var}
We have $\E{(\cc{\RR_n}-\E{\cc{\RR_n}})^4}\lesssim n^4/(\log n)^{8}$.
\end{lemma}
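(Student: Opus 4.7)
The plan is to exploit the dyadic decomposition of Proposition~\ref{pro:cprange} at a well-chosen depth $L$ satisfying $2^L\asymp (\log n)^4$. After centering, write
\[
Y_n\,:=\,\cc{\RR_n}-\E{\cc{\RR_n}} \,=\, \Sigma_n \,-\, (\xi_n-\E{\xi_n}) \,-\, (\epsilon_n - \E{\epsilon_n}),
\]
where $\Sigma_n=\sum_{j=1}^{2^L}(\cc{\RR_n^{(L,j)}} - \E{\cc{\RR_n^{(L,j)}}})$ and $\xi_n=\sum_{i=1}^L\sum_{j\le 2^{i-1}} \chi_n(i,j)$. Since the capacity is translation invariant and the walk's increments on the $2^L$ disjoint time windows are independent, $\Sigma_n$ is a sum of i.i.d.\ centered variables, each distributed as $\cc{\RR_{n/2^L}}-\E{\cc{\RR_{n/2^L}}}$. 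By Minkowski's inequality it suffices to show that each of $\|\Sigma_n\|_4$, $\|\xi_n-\E{\xi_n}\|_4$ and $\|\epsilon_n-\E{\epsilon_n}\|_4$ is $\lesssim n/(\log n)^2$.

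For $\Sigma_n$, Rosenthal's inequality for sums of i.i.d.\ centered random variables combined with the trivial pointwise bound $\cc{\RR_k}\le k$ yields $\E{\Sigma_n^4}\lesssim 2^L(n/2^L)^4 + (2^L(n/2^L)^2)^2 = n^4/8^L + n^4/4^L$, which is $\lesssim n^4/(\log n)^8$ by our choice of $L$. For the cross-term contribution, Minkowski across the $L$ levels together with the independence of $(\chi_n(i,j))_{j\le 2^{i-1}}$ at each fixed level $i$ and one more application of Rosenthal give
\[
\|\xi_n-\E{\xi_n}\|_4 \,\lesssim\, \sum_{i=1}^L \Big(2^{i-1}\,\E{\chi_n(i,1)^4} + 4^{i-1}\,\E{\chi_n(i,1)^2}^2\Big)^{1/4}.
\]
The key input is then Proposition~\ref{prop.moments.chi}: by stationarity and translation invariance, $\chi_n(i,1)\stackrel{(d)}{=}\chi_{n/2^{i-1}}(1,1)$, and since $n/2^i\ge n/(\log n)^4$ we have $\log(n/2^i)\asymp \log n$, so the moment convergence clause provides $\E{\chi_n(i,1)^p}\lesssim (n/(2^i(\log n)^2))^p$ uniformly in $i\le L$ for $p=2,4$. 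Substituting these bounds produces a geometric series in $i$ which sums to give $\|\xi_n-\E{\xi_n}\|_4 \lesssim n/(\log n)^2$.

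Finally, $\epsilon_n$ is by construction bounded by a sum, over the $\sum_{i\le L}2^{i-1}=\mathcal O((\log n)^4)$ dyadic subdivisions, of sizes of intersections of pairs of independent ranges in $\Z^4$; each such intersection has $p$-th moment of polylogarithmic order in $n$, and another application of Minkowski across levels and independence across pairs within each level shows $\|\epsilon_n-\E{\epsilon_n}\|_4$ is only polylogarithmic in $n$, hence negligible. Summing the three bounds via Minkowski yields $\|Y_n\|_4\lesssim n/(\log n)^2$, which is the claim. The main obstacle is the uniform fourth moment estimate on $\chi_n(i,1)$ across all $i\le L$: this is exactly why the ``moreover, all moments converge'' part of Proposition~\ref{prop.moments.chi} is crucial, as the earlier Lemma~\ref{lem:firstsecondchi11} only provides second moment control.
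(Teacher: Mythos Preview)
Your argument is correct and takes a genuinely different route from the paper's. The paper proceeds recursively \`a la Le Gall: it sets $a_k=\sup\{\|\overline{\cc{\RR_n}}\|_2:2^k\le n<2^{k+1}\}$, uses the one-step decomposition $\overline{X_n}=\overline{X}^{(1)}_{[n/2]}+\overline{X}^{(2)}_{n-[n/2]}-\overline{\chi_n(1,1)}-\overline{\epsilon_n}$ together with the moment bounds on $\chi_n(1,1)$ from Proposition~\ref{prop.moments.chi} and Lemma~\ref{lem:previouspaper} to obtain a recursion for $a_k$, solves for the variance, and then repeats the scheme (as in \cite[Lemma~4.2]{Asselah:2016hc}) for the fourth moment. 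You instead go directly to depth $L$ with $2^L\asymp(\log n)^4$, invoke Rosenthal on the i.i.d.\ pieces, and use the identity $\chi_n(i,1)\stackrel{(d)}{=}\chi_{n/2^{i-1}}(1,1)$ together with the moment-convergence clause of Proposition~\ref{prop.moments.chi} to get uniform bounds $\E{\chi_n(i,1)^p}\lesssim (n/(2^i(\log n)^2))^p$ for all $i\le L$. This is legitimate because $n/2^{i-1}\ge n/(\log n)^4$ keeps $\log(n/2^{i-1})\asymp\log n$, and the convergence of moments in Proposition~\ref{prop.moments.chi} (applied with $i=1$) gives a single constant valid for all sufficiently large arguments.

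What each approach buys: yours is more transparent and avoids the two-stage induction, delivering the fourth moment in one shot; the paper's recursion is the classical template and yields the variance bound as an intermediate product. Both ultimately rest on the same moment input for $\chi_n(1,1)$. One small point you gloss over: for the $\epsilon_n$ piece you need $\E{|\RR^1_m\cap\RR^2_m|^4}\lesssim(\log m)^4$ in $\Z^4$, whereas the paper (Proposition~\ref{pro:cprange}) only records the second moment via \cite[Lemma~3.1]{LGRosen}. The fourth-moment bound is of course known (it follows from the convergence in law, with moments, of the normalised mutual intersection local time), but it is worth citing explicitly rather than asserting polylogarithmic control.
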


\begin{proof}[\bf Proof]

The proof follows in the same way as~\cite[Lemma~6.2]{LG86} and~\cite[Lemma~3.5]{Asselah:2016hc} . We write $X_n=\cc{\RR_n}$ and $\overline{X}=X-\E{X}$ and we set for all $k\geq 1$
\[
a_k = \sup\left\{\sqrt{\E{\overline{X_n}^2}}: \, 2^k\leq n<2^{k+1} \right\}.
\]
For $k\geq 2$ we take $n$ such that $2^k\leq n<2^{k+1}$ and write $\ell=[n/2]$ and $m=n-\ell$. Then from Proposition~\ref{pro:cprange} we get
\[
|\overline{X_n} - \overline{X}_\ell^{(1)} - \overline{X}_m^{(2)}| = |\overline{\chi}_n(1,1) + \overline{\epsilon}_n|.
\]
Proposition~\ref{prop.moments.chi} and Lemma~\ref{lem:previouspaper} give that 
\[
\E{{\chi_n(1,1)}}\lesssim \frac{n}{(\log n)^2} \quad \text{ and }\quad 
\E{(\overline{\chi_n(1,1)})^2} \lesssim \frac{n^2}{(\log n)^4}.
\]
Also from Proposition~\ref{pro:cprange} we have that $\E{\epsilon_n}=\mathcal{O}(\log n)$ and $\E{\epsilon_n^2}=\mathcal{O}((\log n)^2)$. The proof for the fourth moment follows in exactly the same way as in~\cite[Lemma~4.2]{Asselah:2016hc} using the bound we first obtain on the variance.
\end{proof}

\begin{proof}[\bf Proof of Theorem~\ref{thm:clt}]

For any fixed $p\ge 1$, Proposition~\ref{pro:cprange} shows that 
\begin{align*}
	\cc{\RR_n} = \sum_{j=1}^{2^p}\cc{\RR_{n}^{(p,j)}} - \sum_{i=1}^{p}\sum_{j=1}^{2^{i-1}} \chi_n(i,j) + \epsilon_n.
\end{align*}
We write $\overline{X} = X-\E{X}$. 
Subtracting the expectation in the equation above we obtain
\begin{align*}
\overline{\cc{\RR_n}}	= \sum_{j=1}^{2^p}\overline{\cc{\RR_n^{(p,j)}}} - \sum_{i=1}^{p}\sum_{j=1}^{2^{i-1}} \overline{\chi_n(i,j)} + \overline{\epsilon_n}.
\end{align*}
Lemma~\ref{lem:var} and the independence of the ranges $\RR^{(p,j)}_n$ immediately give that 
\begin{align*}
	\E{\left(\frac{(\log n)^2}{n}\sum_{j=1}^{2^p}\overline{\cc{\RR_{n}^{p,j}}}\right)^2}\lesssim 2^{-p}.
\end{align*}
Since $\E{\epsilon_n} = o(n/(\log n)^2)$ from Lemma~\ref{lem:deloc} we get that \[
\frac{(\log n)^2}{n}\cdot{\overline{\epsilon_n}} \quad \stackrel{\bP}{\longrightarrow} \quad 0, \quad \text{ as } n\to\infty.
\]
Moreover, using Proposition \ref{prop.moments.chi} we get that for a fixed $p$ as $n\to\infty$
\begin{align*}
	\frac{2(\log n)^2}{\pi^4 \cdot n}\cdot \sum_{i=1}^{p}\sum_{j=1}^{2^{i-1}} \overline{\chi_n(i,j)}\quad 
\stackrel{(d)}{\Longrightarrow} \quad \sum_{i=1}^{p}\sum_{j=1}^{2^{i-1}} \left(\int_{A_{i,j}}G(\beta_s,\beta_t)\,ds\, dt - \E{\int_{A_{i,j}} G(\beta_s,\beta_t)\,ds\, dt} \right).
\end{align*}
From Proposition \ref{pro:conv} we also have in $L^2$-norm: 
\begin{align*}
	\lim_{p\to \infty }\ \sum_{i=1}^{p}\sum_{j=1}^{2^{i-1}}\left(\int_{A_{i,j}}G(\beta_s,\beta_t)\,ds\, dt - \E{\int_{A_{i,j}} G(\beta_s,\beta_t)\,ds\, dt} \right)\ =\  \gamma_G(\C_1)\ =\ \frac 12\, \gamma_G([0,1]^2). 
	\end{align*}
So first taking $p$ large enough and then letting $n\to \infty$ finishes the proof.
\end{proof}

\begin{proof}[\bf Proof of Corollary~\ref{cor:var}]

 Lemma~\ref{lem:var} shows that $(\log n)^4/n^2(\cc{\RR_n}- \E{\cc{\RR_n}})^2$ is uniformly integrable. Hence this together with the convergence in distribution from Theorem~\ref{thm:clt} proves the corollary.
\end{proof}

\section{Appendix}

In this section we prove the two lemma that were used in the proof of Proposition \ref{pro:carleman}. 

\begin{proof}[\bf Proof of Lemma \ref{momentLem1}] 
First using the explicit density of the Brownian motion we get 
\begin{eqnarray*}
\int_0^1 \mathbb E\left[\frac {(a+|\log \|\beta_s - x\||)^k}{\|\beta_s-x\|^2}\right]\, ds & \lesssim & 
\int_0^1 \,  \frac{ds}{s^2} \, \int_{\mathbb R^4}  \frac{(a+|\log \|u\| |)^k }{\|u\|^2}\,  e^{- \frac{\|u+x\|^2}{2s}} \, du\\
&\lesssim & \int_{\mathbb R^4}  \frac{(a+|\log \|u\| |)^k }{\|u\|^2\cdot \|u+x\|^2}\,  e^{- \frac{\|u+x\|^2}{2}} \, du,
\end{eqnarray*}
using Fubini at the second line. 
Now we cut the space in three regions defined as follows: 
\begin{equation*}
\mathcal U_1 : = \left\{\|u\|\ge 2 \|x\|\right\}, \quad \mathcal U_2:=\left\{\|u+x\| \ge \|x\|/2 \text{ and }\|u\|\le 2\|x\| \right\}, \quad 
\mathcal U_3 := \left\{\|u+x\| \le \|x\|/2 \right\}, 
\end{equation*}
and define 
$$F(u,x)\ := \  \frac{(a+|\log \|u\| |)^k }{\|u\|^2\cdot \|u+x\|^2}\cdot e^{- \frac{\|u+x\|^2}{2}}.$$
We bound this function on the three regions as follows 
\begin{eqnarray*}
F(u,x) \ \lesssim \ \left\{ 
\begin{array}{lc}
\frac{(a+|\log \|u\| |)^k }{\|u\|^4}\cdot e^{- \frac{\|u\|^2}{8}} & \text{ on } \mathcal U_1\\
\frac{(a+|\log \|u\| |)^k }{\|u\|^2\cdot \|x\|^2}\cdot e^{- \frac{\|x\|^2}{8}} & \text{ on } \mathcal U_2\\
\frac{(a+1+|\log \|x\| |)^k }{\|x\|^2\cdot \|u+x\|^2}\cdot e^{- \frac{\|u+x\|^2}{2}} & \text{ on } \mathcal U_3.\\
\end{array}
\right.
\end{eqnarray*}
Then we obtain with appropriate changes of variables 
$$\int_{\R^4} \, F(u,x)\, du \ \lesssim \ I_1(x) + I_2(x) + I_3(x),$$
with 
$$I_1(x) = \int_{2\|x\|}^\infty\,  \frac{(a+|\log r |)^k}{r} \cdot e^{- \frac {r^2}{8}} \, dr, $$
$$I_2 (x)= \|x\|^{-2}\cdot e^{- \frac{\|x\|^2}{8}}\,  \int_0^{2\|x\|} \, r \, (a+|\log r |)^k \, dr,$$
$$I_3 (x)= \frac{(a+1+|\log \|x\| |)^k }{\|x\|^2}  \int_0^{\|x\|} r\, e^{-r^2/2} \, dr .$$
Note that $I_3(x)\lesssim (a+1+|\log \|x\| |)^k$. Moreover, $I_1(x)$ and $I_2(x)$ will be bounded using the two following claims. 

\begin{claim}\label{cl:intlog}
For all $a\ge 0$, $b>0$ and $k\ge 1$ we have 
\[
\int_0^b (a+|\log r|)^k r\,dr\  \lesssim  \ b^2 \cdot \sum_{\ell=0}^{k}(a+|\log b|)^{k-\ell} \cdot k^\ell,
\]	
\end{claim}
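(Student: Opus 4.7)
My plan is to reduce the integral to a moment of a shifted exponential random variable by the substitution $r = b e^{-t}$, and then expand by the binomial theorem.

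Concretely, the substitution gives $dr = -b e^{-t}\, dt$, with $r=b$ corresponding to $t=0$ and $r \to 0^+$ to $t\to \infty$, so
\[
\int_0^b (a+|\log r|)^k\, r\, dr \;=\; b^2 \int_0^\infty (a+|\log b - t|)^k\, e^{-2t}\, dt.
\]
By the triangle inequality, $|\log b - t| \le |\log b| + t$ regardless of the sign of $\log b$, hence
\[
\int_0^b (a+|\log r|)^k\, r\, dr \;\le\; b^2 \int_0^\infty (a+|\log b|+t)^k\, e^{-2t}\, dt.
\]

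Now I would expand $(a+|\log b|+t)^k$ with the binomial theorem and use the standard gamma integral $\int_0^\infty t^\ell e^{-2t}\, dt = \ell!/2^{\ell+1}$, yielding
\[
b^2 \sum_{\ell=0}^k \binom{k}{\ell}(a+|\log b|)^{k-\ell}\,\frac{\ell!}{2^{\ell+1}} \;=\; \frac{b^2}{2} \sum_{\ell=0}^k \frac{k!}{(k-\ell)!}(a+|\log b|)^{k-\ell}\,2^{-\ell}.
\]
Using the elementary bound $k!/(k-\ell)! \le k^\ell$ (and discarding the harmless $2^{-\ell}$ factor) gives exactly the claimed estimate
\[
\int_0^b (a+|\log r|)^k\, r\, dr \;\lesssim\; b^2 \sum_{\ell=0}^k (a+|\log b|)^{k-\ell}\, k^\ell.
\]

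There is no real obstacle here: the only minor subtlety is handling the absolute value $|\log b - t|$ correctly for both $b\le 1$ and $b>1$, which is why the triangle inequality (rather than direct substitution $u = -\log r$) is used. Everything else is a one-line binomial expansion followed by the standard gamma integral.
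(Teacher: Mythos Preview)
Your proof is correct. Both arguments are elementary, but your route differs from the paper's in a useful way: the paper substitutes $r=e^{-u}$, obtaining $\int_{-\log b}^\infty (a+|u|)^k e^{-2u}\,du$, and then must split into the cases $b<1$ and $b\ge 1$ (to handle the sign of $u$) and run an integration-by-parts recursion in $k$. Your choice $r=be^{-t}$ shifts the domain to $[0,\infty)$ uniformly in $b$, and the triangle inequality $|\log b - t|\le |\log b|+t$ removes the case split entirely; the binomial expansion plus $\int_0^\infty t^\ell e^{-2t}\,dt=\ell!/2^{\ell+1}$ then replaces the inductive integration by parts. The net effect is the same bound with the same implicit constants, but your argument is a bit more streamlined.
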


\begin{proof}[\bf Proof]
Note 
$$f(k) := \int_0^b (a+|\log r|)^k r\,dr.$$
Using the change of variable $r=\exp(-u)$, we obatin  
$$f(k) = \int_{-\log b}^\infty (a+|u|)^k e^{-2u} \, du.$$
Assume first that $b<1$, so that $-\log b$ is nonnegative. Then an integration by parts gives 
$$f(k) = \frac {b^2}2 (a+|\log b|)^k  + \frac k2 f(k-1),$$
leading to the desired result by induction. Now if $b\ge 1$, one has 
$$f(k) = \int_0^{\log b} (a+u)^k e^{2u} \, du + \int_0^\infty (a+u)^k e^{-2u} \, du,$$
 and an integration by parts gives similarly 
 $$f(k) \, \lesssim \,    b^2(a+\log b)^k + k f(k-1),$$
 and the claim follows as well by induction. 
 \end{proof}

\begin{claim}
\label{claim.moment2}
For all $a\ge 0$, $b>0$, and $k\ge 0$ we have 	
\[
\int_b^\infty \frac{(a+|\log r|)^k}{r}\, e^{- r^2/8}\, dr\  \lesssim \ \frac{(a+|\log b|)^{k+1}}{k+1}+ e^{-b^2/8} \cdot \sum_{\ell =0}^{k} (4k)^\ell (a+|\log b|)^{k-\ell}.
\]
\end{claim}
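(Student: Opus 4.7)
\textbf{Proof plan for Claim~\ref{claim.moment2}.} The plan is to split into two cases according to whether $b\ge 1$ or $b<1$, with the second case reducing to the first plus an elementary piece that produces the $(a+|\log b|)^{k+1}/(k+1)$ term. Throughout, write $I(k,b) := \int_b^\infty (a+|\log r|)^k \, r^{-1} e^{-r^2/8}\,dr$.

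\emph{Main case: $b\ge 1$.} Here $|\log r|=\log r$ on the whole interval. The key idea is to change variable $t=r/b$ to separate the dependence on $b$ from the logarithmic factor. This yields
\[
I(k,b) = \int_1^\infty \frac{\big((a+\log b)+\log t\big)^k}{t}\, e^{-b^2 t^2/8}\,dt,
\]
and binomial expansion gives $I(k,b)=\sum_{\ell=0}^{k}\binom{k}{\ell}(a+\log b)^{k-\ell}\,J_\ell(b)$, where $J_\ell(b):=\int_1^\infty (\log t)^\ell\, t^{-1}\, e^{-b^2 t^2/8}\,dt$. To estimate $J_\ell(b)$, substitute $v=t^2-1$ (so that $dt/t = dv/(2(1+v))$), which extracts the boundary factor $e^{-b^2/8}$ and gives
\[
J_\ell(b) = \frac{e^{-b^2/8}}{2^{\ell+1}} \int_0^\infty \frac{(\log(1+v))^\ell}{1+v}\, e^{-b^2 v/8}\,dv.
\]
Using $\log(1+v)\le v$, $(1+v)^{-1}\le 1$, and the Gamma integral $\int_0^\infty v^\ell e^{-b^2v/8}\,dv = (8/b^2)^{\ell+1}\ell!$, together with $b\ge 1$, one obtains $J_\ell(b)\le 4^{\ell+1}\ell!\, e^{-b^2/8}$. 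Combining this with $\binom{k}{\ell}\ell!\le k^\ell$ then gives
\[
I(k,b) \le 4 e^{-b^2/8} \sum_{\ell=0}^{k}(4k)^\ell (a+\log b)^{k-\ell},
\]
which is the second term of the desired bound (the first term is not needed in this case).

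\emph{Complementary case: $b<1$.} Split the integral at $r=1$. On $[b,1]$ we have $|\log r|=-\log r$ and $e^{-r^2/8}\le 1$, and the antiderivative of $(a-\log r)^k/r$ is $-(a-\log r)^{k+1}/(k+1)$. This gives
\[
\int_b^1 \frac{(a+|\log r|)^k}{r}\,e^{-r^2/8}\,dr \ \le\ \frac{(a-\log b)^{k+1}-a^{k+1}}{k+1}\ \le\ \frac{(a+|\log b|)^{k+1}}{k+1},
\]
which is the first term of the target. For the tail $\int_1^\infty$, apply the result of the main case at $b=1$: it is bounded by $4e^{-1/8}\sum_\ell (4k)^\ell a^{k-\ell}$. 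Since $b<1$ forces $e^{-1/8}\le e^{-b^2/8}$ and $a\le a+|\log b|$, this tail is absorbed into the second term of the RHS.

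\emph{Main obstacle.} Nothing here is genuinely delicate; the only nontrivial step is producing the geometric factor $(4k)^\ell$ in the $\ell$-sum. This comes from the combination $\binom{k}{\ell}\ell!\le k^\ell$ together with the constant $4^{\ell+1}$ that the substitution $v=t^2-1$ naturally produces. All other manipulations are standard estimates for Gamma-type Gaussian tail integrals.
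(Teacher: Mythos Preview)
Your proof is correct. Both cases are handled cleanly, and in particular the substitution $v=t^2-1$ together with $\log(1+v)\le v$ and the Gamma identity gives exactly the right power $(4k)^\ell$ after combining $\binom{k}{\ell}\ell!\le k^\ell$ with the factor $4^{\ell+1}$.

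The paper takes a different route in the case $b\ge 1$: it first dominates $I(k,b)$ by $g(k,b):=\int_b^\infty (a+\log r)^k\, r\, e^{-r^2/8}\,dr$ (using $1/r\le r$ on $[1,\infty)$), and then a single integration by parts yields the recursion $g(k,b)\le 4(a+\log b)^k e^{-b^2/8}+4k\,g(k-1,b)$, which unrolls by induction to the same sum. Your change of variables plus binomial expansion is a genuinely different derivation: it makes the geometric factor $(4k)^\ell$ appear explicitly from the combinatorics, whereas the paper's argument produces it through the recursion. Both are short and elementary; the paper's version avoids the Gamma integral and the substitution, while yours avoids the auxiliary function $g$ and the inductive step. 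The treatment of the case $b<1$ is essentially identical in both proofs.
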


\begin{proof}[\bf Proof]
Assume first that $b\ge 1$, and define   
$$g(k,b)= \int_b^\infty (a+|\log r|)^k\, r \, e^{- r^2/8}\, dr.$$
Note that 
$$\int_b^\infty \frac{(a+|\log r|)^k}{r}\, e^{- r^2/8}\, dr \, \le \, g(k,b).$$
Moreover, an integration by parts yields 
$$g(k,b) \, \le\,  4(a+\log b)^k e^{-b^2/8} + 4k g(k-1,b),$$
which gives the result by induction. Now if $b<1$, we have 
\begin{eqnarray*}
\int_b^\infty \frac{(a+|\log r|)^k}{r}\, e^{- r^2/8}\, dr &\le & \int_b^1 \frac{(a-\log r)^k}{r}\, dr + g(k,1)\\
&=& \frac{(a+|\log b|)^{k+1}}{k+1} + g(k,1),
\end{eqnarray*}
and using the previous estimate for $g(k,1)$, this concludes the proof of the claim. 
\end{proof}
Now we can just apply these two claims with $b=2\|x\|$ and use that $|\log 2 \|x\|| \le 1 + |\log \|x\||$. This gives the desired upper bounds for $I_1(x)$ and $I_2(x)$ and concludes the proof of Lemma \ref{momentLem1}.  
\end{proof}
\begin{proof}[\bf Proof of Lemma \ref{momentLem2}] We have 
\begin{eqnarray*}
\int_0^1 \mathbb E\left[ (a+|\log \| \widetilde \beta_t \| |)^k  \right] \, dt & \lesssim  & \int_0^1 \, \frac{dt}{t^2} \, \int_{\mathbb R^4} (a+|\log \|u \||)^k e^{- \|u\|^2/(2t)}\, du\\
& \lesssim &  \int_{\mathbb R^4} \frac{(a+|\log \|u \| |)^k}{\|u\|^2} e^{- \|u\|^2/2}\, du\\
& \lesssim & \int_0^\infty (a+|\log r|)^k \, r\, e^{- r^2/2}\, dr \\
&\lesssim &\int_0^1 (a+|\log r|)^k \, r\, dr + \int_1^\infty (a+|\log r|)^k \, r\, e^{- r^2/2}\, dr. 
\end{eqnarray*}
Now using the same argument as in the proof of Claim \ref{claim.moment2} for the second integral and Claim \ref{cl:intlog} with $b=1$ for the first one, we obtain the lemma.  
\end{proof}

\section*{Acknowledgements}

We thank Yinshan Chang for kindly sharing with us his manuscript,
which motivated us to look deeper at fluctuations in dimension four.

\bibliographystyle{abbrv}
\bibliography{biblio}

\begin{thebibliography}{10}

\bibitem{A}
M.~Aizenman.
\newblock The intersection of {B}rownian paths as a case study of a
  renormalization group method for quantum field theory.
\newblock {\em Comm. Math. Phys.}, 97(1-2):91--110, 1985.

\bibitem{AZ}
S.~Albeverio and X.~Y. Zhou.
\newblock Intersections of random walks and {W}iener sausages in four
  dimensions.
\newblock {\em Acta Appl. Math.}, 45(2):195--237, 1996.

\bibitem{AmineBruno}
A.~Asselah and B.~Schapira.
\newblock Moderate deviations for the range of a transient random walk: path
  concentration.
\newblock {\em Ann. Sci. de l'E.N.S}.
\newblock to appear.

\bibitem{Asselah:2016hc}
A.~Asselah, B.~Schapira, and P.~Sousi.
\newblock {Capacity of the range of random walk on $\mathbb{Z}^d$}.
\newblock 2016.
\newblock arXiv.1602.03499.

\bibitem{ASS16+}
A.~Asselah, B.~Schapira, and P.~Sousi.
\newblock A strong law of large numbers for the capacity of the {W}iener
  sausage in dimension four.
\newblock 2016.
\newblock {arXiv:1611.04576}.

\bibitem{BS}
D.~C. Brydges and T.~Spencer.
\newblock Self-avoiding random walk and the renormalisation group.
\newblock In {\em Applications of field theory to statistical mechanics
  ({S}itges, 1984)}, volume 216 of {\em Lecture Notes in Phys.}, pages
  189--198. Springer, Berlin, 1985.

\bibitem{BL}
K.~Burdzy and G.~F. Lawler.
\newblock Nonintersection exponents for {B}rownian paths. {I}. {E}xistence and
  an invariance principle.
\newblock {\em Probab. Theory Related Fields}, 84(3):393--410, 1990.

\bibitem{Chang}
Y.~Chang.
\newblock Two observations on the capacity of the range of simple random walks
  on~$\mathbb{Z}^3$ and~$\mathbb{Z}^4$.
\newblock 2016.
\newblock {arXiv:1611.08654}.

\bibitem{CS}
Y.~Chang and A.~Sapozhnikov.
\newblock Phase transition in loop percolation.
\newblock {\em Probab. Theory Related Fields}, 164(3-4):979--1025, 2016.

\bibitem{duplantier1998random}
B.~Duplantier.
\newblock Random walks and quantum gravity in two dimensions.
\newblock {\em Physical review letters}, 81(25):5489, 1998.

\bibitem{DK}
B.~Duplantier and K.-H. Kwon.
\newblock Conformal invariance and intersections of random walks.
\newblock {\em Phys. Rev. Lett.}, 61:2514--2517, Nov 1988.

\bibitem{Durrett}
R.~Durrett.
\newblock {\em Probability: theory and examples}.
\newblock Cambridge Series in Statistical and Probabilistic Mathematics.
  Cambridge University Press, Cambridge, fourth edition, 2010.

\bibitem{DE}
A.~Dvoretzky and P.~Erd{\"o}s.
\newblock Some problems on random walk in space.
\newblock In {\em Proceedings of the {S}econd {B}erkeley {S}ymposium on
  {M}athematical {S}tatistics and {P}robability, 1950}, pages 353--367.
  University of California Press, Berkeley and Los Angeles, 1951.

\bibitem{DEK}
A.~Dvoretzky, P.~Erd{\"o}s, and S.~Kakutani.
\newblock Double points of paths of {B}rownian motion in {$n$}-space.
\newblock {\em Acta Sci. Math. Szeged}, 12(Leopoldo Fejer et Frederico Riesz
  LXX annos natis dedicatus, Pars B):75--81, 1950.

\bibitem{EP}
D.~Erhard and J.~Poisat.
\newblock Asymptotics of the critical time in {W}iener sausage percolation with
  a small radius.
\newblock {\em ALEA Lat. Am. J. Probab. Math. Stat.}, 13(1):417--445, 2016.

\bibitem{FF}
G.~Felder and J.~Fr{\"o}hlich.
\newblock Intersection properties of simple random walks: a renormalization
  group approach.
\newblock {\em Comm. Math. Phys.}, 97(1-2):111--124, 1985.

\bibitem{FFS}
R.~Fern{\'a}ndez, J.~Fr{\"o}hlich, and A.~D. Sokal.
\newblock {\em Random walks, critical phenomena, and triviality in quantum
  field theory}.
\newblock Texts and Monographs in Physics. Springer-Verlag, Berlin, 1992.

\bibitem{JO69}
N.~C. Jain and S.~Orey.
\newblock On the range of random walk.
\newblock {\em Israel J. Math.}, 6:373--380 (1969), 1968.

\bibitem{JP}
N.~C. Jain and W.~E. Pruitt.
\newblock The range of transient random walk.
\newblock {\em J. Analyse Math.}, 24:369--393, 1971.

\bibitem{K}
D.~Khoshnevisan.
\newblock Intersections of {B}rownian motions.
\newblock {\em Expo. Math.}, 21(2):97--114, 2003.

\bibitem{LawlerSAW}
G.~F. Lawler.
\newblock A self-avoiding random walk.
\newblock {\em Duke Math. J.}, 47(3):655--693, 1980.

\bibitem{Law82}
G.~F. Lawler.
\newblock The probability of intersection of independent random walks in four
  dimensions.
\newblock {\em Comm. Math. Phys.}, 86(4):539--554, 1982.

\bibitem{Law85}
G.~F. Lawler.
\newblock Intersections of random walks in four dimensions. {II}.
\newblock {\em Comm. Math. Phys.}, 97(4):583--594, 1985.

\bibitem{Law91}
G.~F. Lawler.
\newblock {\em Intersections of random walks}.
\newblock Probability and its Applications. Birkh\"auser Boston, Inc., Boston,
  MA, 1991.

\bibitem{LawLim}
G.~F. Lawler and V.~Limic.
\newblock {\em Random walk: a modern introduction}, volume 123 of {\em
  Cambridge Studies in Advanced Mathematics}.
\newblock Cambridge University Press, Cambridge, 2010.

\bibitem{LSW}
G.~F. Lawler, O.~Schramm, and W.~Werner.
\newblock Values of {B}rownian intersection exponents. {II}. {P}lane exponents.
\newblock {\em Acta Math.}, 187(2):275--308, 2001.

\bibitem{LG85}
J.-F. Le~Gall.
\newblock Sur le temps local d'intersection du mouvement brownien plan et la
  m\'ethode de renormalisation de {V}aradhan.
\newblock In {\em S\'eminaire de probabilit\'es, {XIX}, 1983/84}, volume 1123
  of {\em Lecture Notes in Math.}, pages 314--331. Springer, Berlin, 1985.

\bibitem{LG86}
J.-F. Le~Gall.
\newblock Propri\'et\'es d'intersection des marches al\'eatoires. {I}.
  {C}onvergence vers le temps local d'intersection.
\newblock {\em Comm. Math. Phys.}, 104(3):471--507, 1986.

\bibitem{LG88}
J.-F. Le~Gall.
\newblock Fluctuation results for the {W}iener sausage.
\newblock {\em Ann. Probab.}, 16(3):991--1018, 1988.

\bibitem{LG94}
J.-F. Le~Gall.
\newblock Exponential moments for the renormalized self-intersection local time
  of planar {B}rownian motion.
\newblock In {\em S\'eminaire de {P}robabilit\'es, {XXVIII}}, volume 1583 of
  {\em Lecture Notes in Math.}, pages 172--180. Springer, Berlin, 1994.

\bibitem{LGRosen}
J.-F. Le~Gall and J.~Rosen.
\newblock The range of stable random walks.
\newblock {\em Ann. Probab.}, 19(2):650--705, 1991.

\bibitem{MadrasSlade}
N.~Madras and G.~Slade.
\newblock {\em The self-avoiding walk}.
\newblock Modern Birkh\"auser Classics. Birkh\"auser/Springer, New York, 2013.
\newblock Reprint of the 1993 original.

\bibitem{PPS}
R.~Pemantle, Y.~Peres, and J.~W. Shapiro.
\newblock The trace of spatial {B}rownian motion is capacity-equivalent to the
  unit square.
\newblock {\em Probab. Theory Related Fields}, 106(3):379--399, 1996.

\bibitem{RS}
B.~R{\'a}th and A.~Sapozhnikov.
\newblock Connectivity properties of random interlacement and intersection of
  random walks.
\newblock {\em ALEA Lat. Am. J. Probab. Math. Stat.}, 9:67--83, 2012.

\bibitem{Symanzik}
K.~Symanzik.
\newblock Euclidean quantum field theory.
\newblock {\em In Local Quantum Theory}.
\newblock R. Jost, editor. pp.152-226, New York: Academic Press 1969 (Varenna
  lectures).

\bibitem{Sznitman}
A.-S. Sznitman.
\newblock Vacant set of random interlacements and percolation.
\newblock {\em Ann. of Math. (2)}, 171(3):2039--2087, 2010.

\bibitem{BBH04}
M.~van~den Berg, E.~Bolthausen, and F.~den Hollander.
\newblock On the volume of the intersection of two {W}iener sausages.
\newblock {\em Ann. of Math. (2)}, 159(2):741--782, 2004.

\bibitem{BBH16}
M.~van~den Berg, E.~Bolthausen, and F.~den Hollander.
\newblock Torsional rigidity for regions with a {B}rownian boundary.
\newblock 2016.
\newblock arXiv:1604.07007.

\end{thebibliography}


\begin{thebibliography}{99}

\bibitem{A} Aizenman, Michael. (1985). The intersection of Brownian paths as a case study of a renormalization group method for quantum field theory. Physics 97, 91–110.


\bibitem{AZ} Albeverio, Sergio; Zhou, Xian Yin. Intersections of random walks and Wiener sausages in four dimensions. Acta Appl. Math. 45 (1996), no. 2, 195--237.

\bibitem{AS2} Asselah Amine; Schapira Bruno. Moderate deviations for the range of a transient random walk: path concentration, 
to appear in Ann. Sci. de l'E.N.S. 

\bibitem{ASS16} Asselah Amine; Schapira Bruno, Sousi Perla. Capacity of the range of random walk, arxiv.1602.03499. 

\bibitem{ASS16+} Asselah Amine; Schapira Bruno, Sousi Perla. A strong law of large numbers for the capacity of the Wiener sausage in dimension four, in preparation. 

\bibitem{BBH04} van den Berg, Michiel; Bolthausen, Erwin; den Hollander, Frank. On the volume of the intersection of two Wiener sausages. Ann. of Math. (2) 159 (2004), no. 2, 741--782. 

\bibitem{BBH16} van den Berg, Michiel; Bolthausen Erwin; den Hollander Frank. Torsional rigidity for regions with a Brownian boundary, arXiv:1604.07007. 

\bibitem{Chang} Chang, Yinshan. Personal communication. 

\bibitem{CS} Chang, Yinshan; Sapozhnikov, Art\"em. Phase transition in loop percolation. Probab. Theory Related Fields 164 (2016), no. 3-4, 979–1025.

\bibitem{DE} Dvoretzky, Aryeh; Erd\"os, Paul. Some problems on random walk in space. Proceedings of the Second Berkeley Symposium on Mathematical Statistics and Probability, 1950. pp. 353–367. University of California Press, Berkeley and Los Angeles, 1951.

\bibitem{Dur} Durrett, Rick. Probability: theory and examples. Fourth edition. Cambridge Series in Statistical and Probabilistic Mathematics. Cambridge University Press, Cambridge, 2010. x+428 pp.


\bibitem{DEK} Dvoretzky, Aryeh; Erd\"os, Paul; Kakutani, Shizuo. Double points of paths of Brownian motion in n-space. Acta Sci. Math. Szeged 12, (1950).

\bibitem{EP} Erhard Dirk; Poisat Julien. Asymptotics of the critical time in Wiener sausage percolation with a small radius, ALEA, Lat. Am. J. Probab. Math. Stat.13(1), 417--445 (2016). 

\bibitem{FFS} Fern\'andez, Roberto; Fr\"ohlich, J\"urg; Sokal, Alan D. Random walks, critical phenomena, and triviality in quantum field theory. Texts and Monographs in Physics. Springer-Verlag, Berlin, 1992. xviii+444


\bibitem{JO69} Jain, Naresh C.; Orey, Steven. On the range of random walk. Israel J. Math. 6 1968, 373--380 (1969). 

\bibitem{JP} Jain, Naresh C.; Pruitt, William E. The range of transient random walk. J. Analyse Math. 24, (1971), 369--393.

\bibitem{K} Khoshnevisan, Davar. Intersections of Brownian motions. Expo. Math. 21 (2003), no. 2, 97–114. 


\bibitem{Law91} Lawler, Gregory F. Intersections of random walks. Reprint of the $1996$ edition. Modern Birkhauser Classics. Birkhauser/Springer, New York, 2013. iv+223

\bibitem{LawLim} Lawler, Gregory F. and Limic, Vlada. Random walk: a modern introduction. 

\bibitem{LG85} Le Gall, Jean-Fran{\c c}ois. Sur le temps local d'intersection du mouvement brownien plan et la m\'ethode de renormalisation de Varadhan. (French) [On the local time of intersection of Brownian motion in the plane and Varadhan's renormalization method] S\'eminaire de probabilit\'es, XIX, 1983/84, 314–331, Lecture Notes in Math., 1123, Springer, Berlin, 1985. 

\bibitem{LG86} Le Gall, Jean-Fran{\c c}ois. Propriétés d'intersection des marches aléatoires. I. Convergence vers le temps local d'intersection. (French) [Intersection properties of random walks. I. Convergence to local time of intersection] Comm. Math. Phys. 104 (1986), no. 3, 471–507.

\bibitem{LG88} Le Gall, Jean-Fran{\c c}ois. Fluctuation results for the Wiener sausage. Ann. Probab. 16 (1988), no. 3, 991–1018.

\bibitem{PPS} Pemantle, Robin; Peres, Yuval; Shapiro, Jonathan. 
The trace of spatial Brownian motion is capacity-equivalent to the unit square.  
Probab. Theory Related Fields 106 (1996), no. 3, 379–399. 

\bibitem{RS} R\'ath, Bal\'azs and Sapozhnikov, Art\"em, Connectivity properties of random interlacement and intersection of random walks, ALEA Lat. Am. J. Probab. Math. Stat. 9, (2012), 67--83.

\bibitem{Sznitman} Sznitman, Alain-Sol. Vacant set of random interlacements and percolation, Ann. of Math. (2) 171, (2010), 2039--2087. 

\end{thebibliography}

\end{document}